\newcolumntype{C}{>{\centering\arraybackslash}X}
\newcolumntype{R}{>{\raggedleft\arraybackslash}X}
\newcolumntype{L}{>{\raggedright\arraybackslash}X}
\newtheorem{prop}{Proposition}
\newcommand{\abs}[1]{\left\lvert#1\right\rvert}
\newcommand{\ubar}[1]{\underline{#1}}
\newcommand{\obar}[1]{\bar{#1}}
\DeclarePairedDelimiter\ceil{\lceil}{\rceil}     
\DeclarePairedDelimiter\floor{\lfloor}{\rfloor}
\algrenewcommand\algorithmicindent{2em}%
\algrenewcommand\algorithmicrequire{\textbf{Input:}}
\algnewcommand{\IIf}[1]{\State\algorithmicif\ #1\ \algorithmicthen}
\algnewcommand{\EndIIf}{\unskip\ \algorithmicend\ \algorithmicif}
\newcommand{\hvrpSol}{\left(\mathbf{R}, \bm{\kappa}\right)}
\newcommand{\hvrpSolx}[1]{\left(\mathbf{R}^{#1}, \bm{\kappa}^{#1}\right)}
\title{Robust optimization of a broad class of heterogeneous vehicle routing problems under demand uncertainty}
\author[1]{Anirudh Subramanyam}
\author[2,3]{Panagiotis P.~Repoussis}
\author[1]{Chrysanthos E.~Gounaris}
\affil[1]{\small Carnegie Mellon University, Pittsburgh, United States}
\affil[2]{\small Stevens Institute of Technology, Hoboken, United States}
\affil[3]{\small Athens University of Economics and Business, Athens, Greece}
\date{}
\begin{document}

\maketitle

\begin{abstract}
This paper studies robust variants of an extended model of the classical Heterogeneous Vehicle Routing Problem (HVRP), where a mixed fleet of vehicles with different capacities, availabilities, fixed costs and routing costs is used to serve customers with uncertain demand. This model includes, as special cases, all variants of the HVRP studied in the literature with fixed and unlimited fleet sizes, accessibility restrictions at customer locations, as well as multiple depots. Contrary to its deterministic counterpart, the goal of the robust HVRP is to determine a minimum-cost set of routes and fleet composition that remains feasible for all demand realizations from a pre-specified uncertainty set. To solve this problem, we develop robust versions of classical node- and edge-exchange neighborhoods that are commonly used in local search and establish that efficient evaluation of the local moves can be achieved for five popular classes of uncertainty sets. The proposed local search is then incorporated in a modular fashion within two metaheuristic algorithms to determine robust HVRP solutions. The quality of the metaheuristic solutions is quantified using an integer programming model that provides lower bounds on the optimal solution. An extensive computational study on literature benchmarks shows that the proposed methods allow us to obtain high quality robust solutions for different uncertainty sets and with minor additional effort compared to deterministic solutions.

\noindent \textbf{Keywords:} robust optimization, vehicle routing, demand uncertainty, local search, metaheuristics, branch-and-cut.
\end{abstract}

\section{Introduction}\label{sec:introduction}
Vehicle routing problems involve the determination of cost-optimal transportation plans for the distribution of goods or the delivery of services between production facilities, warehouses, distribution centers and end customers.
The goal is to determine an optimal assignment of customer orders to vehicles, as well as the optimal sequencing of customer orders served by individual vehicles.
The most common objective is to minimize the transportation cost, which is often expressed as the sum of one time costs (e.g., rental or capital amortization costs of individual vehicles) that are proportional to the size of the vehicle fleet, and recurring costs (e.g., fuel, labor or insurance costs) that are proportional to the total distance or duration traveled by individual vehicles.
We refer the reader to~\cite{GoldenRaghavanWasil,TothVigo:2ndEdition} for a general overview of the practical applications and numerous variants of vehicle routing problems, which differ in terms of the time scales involved, as well as in terms of the objectives and operational constraints considered.

The most widely studied variant is the Capacitated Vehicle Routing Problem (CVRP) \cite{Laporte2009:fifty_years_of_VRP}.
The CVRP aims to determine the optimal delivery of goods from a depot to a set of customers using capacity-constrained vehicles.
This archetypal variant makes two simplifying assumptions: \textit{(i)} the vehicle fleet is assumed to be homogeneous, fixed and stationed at a single depot, and \textit{(ii)} the problem data, such as customer demands and transportation costs, are assumed to be precisely known when the problem is to be solved.
However, these assumptions are often difficult to justify in practice.

First, the vehicle fleet is rarely homogeneous in most practical applications.
We refer the reader to~\cite{Hoff2010:industrial_aspects_of_hvrp} who provide an excellent overview of practical aspects of fleet sizing and dimensioning that arise in real industrial applications.
The authors argue that a vehicle fleet that is acquired over a long period of time is often heterogeneous not only because the acquired vehicles inherently have different physical characteristics, but also because they develop different characteristics over their lifetimes (e.g., operating, maintenance and insurance costs vary depending on the level of depreciation and usage).
Moreover, distributors typically want a diverse vehicle fleet, both due to operational constraints (e.g., physical dimensions or compatibility constraints that restrict access of certain vehicles to certain areas) as well as the inherent benefits of owning a versatile fleet.

Second, the assumption of deterministic problem data is unrealistic.
Indeed, the parameters of a vehicle routing problem are often subject to significant uncertainty, and their precise values are often only observed gradually during the execution of the transportation plan.
For example, travel and service times can vary due to unforeseen events such as bad weather, mechanical breakdowns or traffic congestion.
Similarly, customer demands fluctuate from day to day and in fact, they may be uncertain even at the time when the vehicles are to be dispatched.
The motivation for taking into account this uncertainty is particularly strong when making strategic or tactical fleet composition decisions.
This is because, on the one hand, operational parameters such as customer demands and travel times are often not known with certainty at the strategic level when the fleet composition is to be decided.
On the other hand, these long-term decisions often involve significant amounts of capital investment; therefore, when customer demand is higher than expected, external vehicle fleets must be leased over a short-term operational horizon, which is also costly.

In this paper, we depart from the aforementioned assumptions of fixed, homogeneous fleets and deterministic parameters.
In particular, we study a generalization of the CVRP, known as the Heterogeneous Vehicle Routing Problem (HVRP), and focus our attention to problem settings where the customer demands are subject to uncertainty.
The HVRP can be used to model a number of problem settings including those with an unlimited number of vehicles, accessibility restrictions at customer locations, vehicle-dependent fixed costs and routing costs, as well as multiple depots.
The HVRP was introduced in the seminal work of~\cite{Golden1984:fsm}, and since then, several papers have studied this problem.
A recent, comprehensive literature review of the HVRP and its numerous variants, solution algorithms and applications can be found in~\cite{Koc2016:thirty_years_of_HVRP,Irnich2014}.
Among the 150 or so works reviewed in these papers, only a single work~\cite{Teodorovic1995} has attempted to address the effect of uncertainty.

To date, most contributions to vehicle routing under uncertainty, including the aforementioned work of~\cite{Teodorovic1995}, model the uncertain customer demands as random variables that follow a known probability distribution.
The models take the form of stochastic programs~\cite{BirgeLouveaux:SP} or Markov decision processes~\cite{Bertsekas:DP}, and the goal is to optimize a risk measure (such as the expected value or the conditional-value-at-risk) of the transportation costs, subject to satisfying the constraints with high probability.
We refer to~\cite{Gendreau2014:svrp} for an excellent survey of the stochastic vehicle routing literature.
According to the classification in~\cite{Gendreau2014:svrp}, the three most common modeling paradigms are recourse models, chance-constrained models and reoptimization models.
In recourse models, a planned or \textit{here-and-now} solution must be designed before the true values of the uncertain parameters become known while recourse or \textit{wait-and-see} actions can be taken in the future after observing their true values.
Different to recourse models, the goal of chance-constrained models is to determine a single set of vehicle routes such that the probability of the total demand served along each route exceeding the vehicle capacity is within prespecified limits.
Finally, in reoptimization models, the goal is to dynamically modify the vehicle routes during their execution, as uncertain information gets revealed over time.

Despite their success in addressing a wide variety of decision problems under uncertainty, the aforementioned models suffer from two shortcomings. First, they assume that the true probability distribution is known precisely, which is rarely the case in practice. Second, they are plagued by the curse of dimensionality, which often makes them computationally intractable for inputs of realistic size. Indeed, simply evaluating the expected costs involves multi-dimensional integration, which is difficult to perform in the context of an optimization search process.

Robust optimization is a promising methodology that attempts to address these shortcomings.
In contrast to stochastic programs and Markov decision processes, robust optimization does not assume precise knowledge of the probability distribution governing the uncertain parameters.
In fact, it only requires knowledge of their support and assumes a deterministic set-based model of the uncertainty.
The basic robust optimization model consists of determining a solution that remains feasible for any realization of the uncertain parameters in a given set, which is referred to as the \textit{uncertainty set}.
The primary advantage of robust optimization is its computational tractability, since it is well known that it can be reformulated as a nonstochastic model that enjoys tractability properties similar to the deterministic problem.
This makes it well suited to address large problem instances where little or no historical information about the uncertain parameters is available.
We refer the reader to~\cite{RobustOptimizationBook,BertsimasBrownCaramanis2011:SiamReview} for an overview of the theory and applications of robust optimization.

Over the last decade, several vehicle routing problems have been addressed using robust optimization.
Most studies have focused on the CVRP under demand and/or travel time uncertainty~\cite{Sungur2008,Sungur2010,Ordonez2010,Erera2010,Gounaris2013:OR,Gounaris2016:AMP}, but a few have also addressed the CVRP with Time Windows under travel time uncertainty~\cite{Agra2013:robust_vrptw,Zhang2016} and the Multi-Period CVRP under customer order uncertainty~\cite{Subramanyam2017:robust_mpvrp}.

\subsection{Our Contributions}\label{sec:introduction:contributions}
In this paper, we study the modeling and solution of the robust HVRP under customer demand uncertainty.
The goal is to determine a single set of vehicle routes as well as the associated fleet size and composition such that the total demand served on any route is less than the associated vehicle capacity, under any realization of the demands in a prespecified uncertainty set.
Our paper generalizes the works of~\cite{Gounaris2013:OR,Gounaris2016:AMP} for the robust CVRP along multiple directions.
First, our work addresses not only the CVRP, but also all major variants of the HVRP that have been considered in the literature.
Second, we consider three new families of practically-relevant uncertainty sets in addition to the two considered in~\cite{Gounaris2013:OR,Gounaris2016:AMP}, and we discuss how each of these five sets can be constructed in the context of vehicle routing using historical data.

The distinct contributions of this paper may be summarized as follows.
\begin{enumerate}
\item [-] We augment various node- and edge-exchange neighborhoods that are commonly used in local search algorithms so that they generate routes that remain capacity-feasible for any anticipated demand realization. Although the evaluation of each local move amounts, in general, to the solution of a convex optimization problem, we show how this can be done more efficiently via closed-form expressions for five popular classes of uncertainty sets, namely budget sets, factor models, ellipsoids, cardinality-constrained sets, and discrete sets. To this purpose, we present suitable data structures and establish time and storage complexities for updating those during local search.

\item [-] We demonstrate that our robust local search method can be easily incorporated into any local search based metaheuristic that is devised for deterministic vehicle routing. To that end, we focus on two largely different metaheuristic algorithms, namely Iterated Local Search and Adaptive Memory Programming, and point them to utilize our robust local search so as to determine robust feasible routes. Furthermore, we show that the computational tractability of these two algorithms remains on par with that of their deterministic equivalents.

\item [-] We propose an integer programming formulation and associated branch-and-cut algorithm to obtain lower bounds on the optimal robust HVRP solution. A key feature of this formulation is a generalization of the rounded capacity inequalities from the CVRP to the robust HVRP, and we show how the efficient separation of these generalized inequalities is enabled by the same closed-form expressions and data structures that are used in the robust local search.

\item [-] We conduct an extensive computational study using the five aforementioned uncertainty sets and literature benchmarks of several problem variants, including the fleet size and mix, fixed fleet, multi-depot and site-dependent vehicle routing problems. We elucidate the computational overhead of incorporating robustness in metaheuristic algorithms, the quality of the lower bounds from the exact algorithm, as well as the trade-off between cost and robustness against the uncertainty sets considered.
\end{enumerate}

The rest of this paper is organized as follows.
Section~\ref{sec:problem_definition} provides a mathematical definition of the various HVRP variants that we consider in this paper;
Section~\ref{sec:wc_evaluation} presents the examined uncertainty sets, the closed-form expressions of the worst-case load as well as data structures for their efficient computation;
Section~\ref{sec:local_search} presents the robust local search and its incorporation into metaheuristics;
Section~\ref{sec:milp_and_bnc} presents the integer programming formulation and branch-and-cut algorithm;
Section~\ref{sec:results} presents computational results; and, Section~\ref{sec:conclusions} offers concluding remarks.

\section{Robust Heterogeneous Vehicle Routing}\label{sec:problem_definition}
An undirected graph $G = (V, E)$ with nodes $V = \{0, 1, \ldots, n\}$ and edges $E$ is given.
The node $0 \in V$ represents the depot, whereas each node $i\in V_C \coloneqq V\setminus\{0\}$ represents a customer with demand $q_i \in \mathbb{R}_{+}$.
The depot is equipped with a heterogeneous fleet of vehicles, which is composed of a set $K = \{1, \ldots, m\}$ of $m$ different \emph{vehicle types}.
For each type $k \in K$, $m_k$ vehicles are available, each of which has capacity $Q_k$.
Furthermore, each vehicle of type $k \in K$ incurs a \emph{fixed cost} $f_k \in \mathbb{R}_{+}$ if it is used and a \emph{routing cost} $c_{ijk} \in \mathbb{R}_{+}$ if it traverses the edge $(i, j) \in E$.

A \emph{route} is a simple cycle in $G$ that passes through the depot.
We represent a route by $R = \left(r_1, \ldots, r_{\abs{R}}\right)$, where $r_l \in V_C$ represents the $l^\text{th}$ customer and $\abs{R}$ the number of customers visited on route $R$.
We use the notation $i \in R$ to indicate that customer $i$ is visited on route $R$; that is, $i = r_l$ for some $l \in \left\{1, \ldots, \abs{R} \right\}$.
If route $R$ is performed by a vehicle of type $k \in K$, then a cost equal to the sum of the routing costs and fixed cost associated with that vehicle type is incurred, namely $c(R,k) = f_k + \sum_{l=0}^{\abs{R}} c_{r_lr_{l+1}k}$, where we have defined $r_0 = r_{\abs{R}+1} = 0$.

When the customer demands are known precisely, a \emph{set of routes} $\mathbf{R} = (R_1, \ldots, R_H)$ in conjunction with a \emph{fleet composition vector} $\bm{\kappa} = (\kappa_1, \ldots, \kappa_H)$ is said to define a \emph{feasible} HVRP solution $\hvrpSol$ if and only if the following conditions are satisfied:
\begin{enumerate}[label=\textbf{(C\arabic*)}]
\item\label{def:feasible:partition} The routes $R_1, \ldots, R_H$ partition the customer set $V_C$. In other words, each customer is visited on exactly one route.

\item\label{def:feasible:fleet_size} The number of routes performed by vehicles of type $k$ does not exceed their available number; that is, $\sum_{h = 1}^H \mathbb{I}[\kappa_h = k] \leq m_k$ for all $k \in K$, where $\mathbb{I}[\mathcal{E}]$ is the indicator function that evaluates to $1$ if the expression $\mathcal{E}$ is true and $0$ otherwise.

\item\label{def:feasible:capacity} The capacities of all vehicles are respected; that is, $\sum_{i \in R_h} q_i \leq Q_{\kappa_h}$ for all $h \in \{1, \ldots, H\}$.
\end{enumerate}
The cost of a feasible solution is defined to be the sum of the costs of its individual routes, $c\hvrpSol = \sum_{h = 1}^H c(R_h, \kappa_h)$. 
The goal of the HVRP is then to determine a feasible solution of minimum cost.

The HVRP model generalizes several VRP variants that have been studied in the literature~\cite{Baldacci2009:MP}.
The various characteristics that distinguish these variants are summarized in Table~\ref{table:summary_of_HVRP_variants}.
\begin{enumerate}
\item The classical Capacitated VRP (CVRP), in which a homogeneous fleet of $v$ vehicles of identical capacity $Q$ are available at a central depot. The HVRP reduces to the CVRP, if we set $m = 1$, $m_1 = v$, $Q_1 = Q$ and $f_1 = 0$ (i.e., there are no fixed costs associated with the vehicles).

\item The HVRP with no fixed costs ($f_k = 0$ for all $k \in K$) but with vehicle-dependent routing costs, commonly denoted as the HVRPD.

\item The Fleet Size and Mix VRP with fixed costs, vehicle-dependent routing costs, and in which an unlimited number of vehicles of each type is available ($m_k = n$ for all $k \in K$), commonly denoted as FSMFD.

\item The Fleet Size and Mix VRP with fixed costs, vehicle-independent routing costs ($c_{ijk_1} = c_{ijk_2}$ for all $(k_1, k_2) \in K \times K$ and $(i, j) \in E$) and in which an unlimited number of vehicles of each type is available ($m_k = n$ for all $k \in K$), commonly denoted as FSMF.

\item The Fleet Size and Mix VRP with no fixed costs ($f_k = 0$ for all $k \in K$), vehicle-dependent routing costs, and in which an unlimited number of vehicles of each type is available ($m_k = n$ for all $k \in K$), commonly denoted as FSMD.

\item The Site Dependent VRP (SDVRP), in which each customer site $i \in V_C$ can only be visited by a subset of vehicle types $K_i \subseteq K$, representing site-specific constraints. There are no fixed costs ($f_k = 0$ for all $k \in K$) and the routing costs are vehicle-independent but site-dependent. In other words, the routing costs are defined as follows (where we have defined $K_0 = K$):
\[
c_{ijk} = \begin{cases}
\hat{c}_{ij}  & \text{ if } k \in K_i \cap K_j,\\
+\infty & \text{ otherwise,}
\end{cases}\;\;\; \forall (i, j) \in E.
\]

\item The Multi-Depot CVRP (MDVRP), in which a homogeneous fleet of vehicles are stationed at $m$ distinct depots. The vehicle capacities are identical ($Q_k = Q$ for all $k \in K$), their number is unlimited ($m_k = n$ for all $k \in K$) and there are no fixed costs associated with their use ($f_k = 0$ for all $k \in K$). The routing costs are vehicle-independent and are represented by a $(n + m) \times (n + m)$ cost matrix $\hat{c}$, where $\hat{c}_{n+k\, j}$ represents the routing cost along the edge connecting the $k^\text{th}$ depot and customer $j \in V_C$, for all $k \in K$. In other words, we have:
\[
c_{ijk} = \begin{cases}
\hat{c}_{n+k\,j}  & \text{ if } i = 0,\\
\hat{c}_{ij} & \text{ otherwise,}
\end{cases}\;\;\; \forall (i, j) \in E, \; \forall k \in K.
\]
\end{enumerate}

\begin{table}
\centering
\caption{Distinguishing characteristics of the problem variants studied in the literature.}
\begin{tabularx}{\textwidth}{lCCCC}
\toprule
Problem variant & Vehicle fleet & Fleet size & Fixed costs & Routing costs \\
\midrule
CVRP    & Homogeneous   & Limited    & Ignored          & Independent \\
HVRPFD  & Heterogeneous & Limited    & Considered       & Dependent \\
HVRPD   & Heterogeneous & Limited    & Ignored          & Dependent \\
FSMFD   & Heterogeneous & Unlimited  & Considered       & Dependent \\
FSMD    & Heterogeneous & Unlimited  & Considered       & Independent \\
FSMF    & Heterogeneous & Unlimited  & Ignored          & Dependent \\
SDVRP   & Heterogeneous & Limited    & Ignored          & Dependent \\
MDVRP   & Homogeneous   & Unlimited  & Ignored          & Dependent \\
\bottomrule
\end{tabularx}
\label{table:summary_of_HVRP_variants}
\end{table}

In practice, it is often the case that the customer demands are not known precisely when the vehicles are to be dispatched.
In such cases, one option is to replace the unknown demands by their `nominal' values (e.g., by considering a historical sample average) and solve the original deterministic model.
Unfortunately, this would often lead to situations in which the constructed vehicle routes `fail' during their execution (e.g., the vehicles might exceed their carrying capacity in a pickup problem or fail to deliver the demanded quantity in a delivery problem), particularly in situations that deviate from the nominal.
To prevent the occurrence of such situations, we adopt a robust optimization approach and assume that the customer demands can take \emph{any} values from an \emph{uncertainty set} $\mathcal{Q} \subseteq \mathbb{R}^n_{+}$.
This uncertainty set should be chosen in a way that reflects the decision-maker's \emph{a priori} confidence regarding the possible values that the customer demands may take.
Whenever historical data is available, as is the case in practice, statistical models can be used to explicitly parameterize the size and shape of the uncertainty set over this \textit{a priori} confidence level.
We shall revisit this matter in Section~\ref{sec:wc_evaluation}.
For now, we shall only assume, without loss of generality, that the uncertainty set is a non-empty, closed and bounded subset of $\mathbb{R}^n_{+}$.

For a given choice of the uncertainty set $\mathcal{Q}$, the goal in the \emph{robust HVRP} is to determine a \emph{robust feasible} solution of minimum cost.
The solution $\hvrpSol$ is said to be robust feasible if and only if it satisfies conditions~\ref{def:feasible:partition},~\ref{def:feasible:fleet_size} and~\ref{def:feasible:hvrp:robust_capacity}.
\begin{enumerate}[label=\textbf{(C\arabic*r)}]
\setcounter{enumi}{2}
\item\label{def:feasible:hvrp:robust_capacity} The capacities of all vehicles are respected under any realization of the customer demands from the uncertainty set; that is, $\sum_{i \in R_h} q_i \leq Q_{\kappa_h}$ for all $h \in \{1, \ldots, H\}$ and all $q \in \mathcal{Q}$.
\end{enumerate}
By construction, the condition~\ref{def:feasible:hvrp:robust_capacity} is equivalent to verifying whether $\max\limits_{q \in \mathcal{Q}} \sum_{i \in R_h} q_i \leq Q_{\kappa_h}$ is satisfied for each route $h \in \{1, \ldots, H\}$.
In other words, the total load carried by any vehicle must be less than its capacity under the worst-case realization of the customer demands from the uncertainty set $\mathcal{Q}$.
Efficiently evaluating the worst-case load, and hence, verifying condition~\ref{def:feasible:hvrp:robust_capacity}, is key to developing efficient (exact and heuristic) algorithms for the robust HVRP.
The efficient computation of the worst-case load is the subject of the next section, whereas its integration into heuristic and exact methods is discussed in Sections~\ref{sec:local_search} and~\ref{sec:milp_and_bnc}, respectively.

\section{Efficient Computation of the Worst-Case Load}\label{sec:wc_evaluation}
Given a vehicle route $R$, we define its \emph{worst-case load} as the optimal value of the following problem:
\begin{equation}\label{eq:worst_case_problem}
\max_{q \in \mathcal{Q}} \sum_{i \in R} q_i.
\end{equation}
First, note that we can assume, without loss of generality, that the uncertainty set $\mathcal{Q}$ is convex. This is because the objective function of problem~\eqref{eq:worst_case_problem} is linear and therefore, we can equivalently replace the feasible region $\mathcal{Q}$ with its convex hull.
Therefore, in general, computing the worst-case load of a route requires the solution of a convex optimization problem.
Even if $\mathcal{Q}$ is a polyhedron, say in $N$ dimensions and described by $M$ inequalities, then computing an optimal solution of problem~\eqref{eq:worst_case_problem} requires $\mathcal{O}(MN^2)$ arithmetic operations using interior point methods~\cite{BoydVandenberghe:convex_optimization}.

As we shall see later in Sections~\ref{sec:local_search} and~\ref{sec:milp_and_bnc}, this is extremely slow when used in the context of a heuristic or an exact method where hundreds or even thousands of routes are constructed \emph{every second}, and their associated worst-case loads must be computed in order to verify if condition~\ref{def:feasible:hvrp:robust_capacity} is satisfied.
It would be impractical to call a general-purpose convex optimization solver to solve problem~\eqref{eq:worst_case_problem} in such cases.
Furthermore, the successive routes constructed by a heuristic method differ only marginally; therefore, if we know the worst-case load of a route $R$, then we would like to compute the worst-case load of routes $R'$ which are ``almost similar'' to $R$ with minimal additional effort.
It is not clear how this can be achieved using a general-purpose solver.

Fortunately, it can be shown that the worst-case load can be efficiently computed for a broad class of popular but practically-relevant uncertainty sets. Section~\ref{sec:wc_evaluation:uncertainty_sets} elaborates on the structure of these sets, while Section~\ref{sec:wc_evaluation:closed_form_expressions} illustrates how the worst-case load can be efficiently in such cases.

\subsection{Uncertainty Sets}\label{sec:wc_evaluation:uncertainty_sets}
We note that if the uncertainty set is rectangular; that is, $\mathcal{Q} = \{q_1 : q \in \mathcal{Q} \} \times \ldots \times \{q_n : q \in \mathcal{Q}\}$, then the optimal solution of problem~\eqref{eq:worst_case_problem} is attained when each customer demand attains its worst realization individually (defined as $\obar{q}_i = \max\{q_i : q \in \mathcal{Q}\}$), irrespective of the other customer demands.
However, this is a very conservative choice as it is unlikely that all customer demands will simultaneously attain their maximum possible values.
Therefore, we only consider those cases where the uncertainty set is not rectangular.

\subsubsection{Budget sets}\label{sec:wc_evaluation:uncertainty_sets:budgets}
Consider the uncertainty set of the following form (see also Figure~\ref{figure:set_budget}):
\begin{equation}\label{eq:set_disjoint_budget}
\mathcal{Q}_B = \left\{
q \in [\ubar{q}, \obar{q}]: \sum_{i \in B_l} q_i \leq b_l \;\;\; \forall l \in \{1, \ldots, L\}.
\right\}
\end{equation}
This uncertainty set is formed by intersecting the $n$-dimensional hyperrectangle $[\ubar{q}, \obar{q}]$ with the $L \in \mathbb{N}$ budget constraints involving customer subsets $B_l \subseteq V_C$.
The $l^\text{th}$ budget constraint imposes a limit $b_l \in \mathbb{R}_{+}$ on the cumulative demand of the customers in the set $B_l$.
Observe that, by setting $b_l = \sum_{i \in B_l} \ubar{q}_i$, for all $l \in \{1, \ldots, L\}$, the uncertainty set reduces to a singleton $\mathcal{Q}_B = \{\ubar{q}\}$, whereas by setting $b_l = \sum_{i \in B_l} \obar{q}_i$, the uncertainty set becomes the $n$-dimensional hyperrectangle $[\ubar{q}, \obar{q}]$.
To exclude empty sets, we shall assume, without loss of generality, that $\ubar{q} \leq \obar{q}$ and $\sum_{i \in B_l} \ubar{q}_i \leq b_l$.

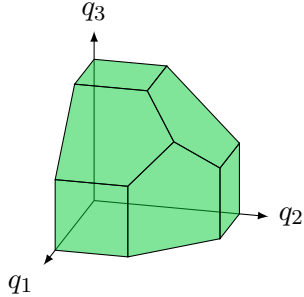
\begin{figure}[!htb]
\centering
\tdplotsetmaincoords{70}{105}
\begin{tikzpicture}[scale=2.0,tdplot_main_coords,face/.style={fill opacity=0.5,fill=green!80!blue}]
\draw[-latex] (0,0,0) -- (1.3,0,0) node[anchor=north east]{$q_1$};
\draw[-latex] (0,0,0) -- (0,1.2,0) node[anchor=west]{$q_2$};
\draw[-latex] (0,0,0) -- (0,0,1.2) node[anchor=south]{$q_3$};

\coordinate (O) at (0,0,0);
\coordinate (D) at (0.75,0.75,0.75);
\coordinate (A1) at (0,0,1);
\coordinate (A2) at (0.5,0,1);
\coordinate (A3) at (0.5,0.5,1);
\coordinate (A4) at (0,0.5,1);
\coordinate (B1) at (1,0,0);
\coordinate (B2) at (1,0.5,0);
\coordinate (B3) at (1,0.5,0.5);
\coordinate (B4) at (1,0,0.5);
\coordinate (C1) at (0,1,0);
\coordinate (C2) at (0.5,1,0);
\coordinate (C3) at (0.5,1,0.5);
\coordinate (C4) at (0,1,0.5);

\draw [face] (A1) -- (A2) -- (A3) -- (A4) -- cycle;
\draw [face] (B1) -- (B2) -- (B3) -- (B4) -- cycle;
\draw [face] (C1) -- (C2) -- (C3) -- (C4) -- cycle;

\draw [face] (A2) -- (A3) -- (D) -- (B3) -- (B4) -- cycle;
\draw [face] (A3) -- (A4) -- (C4) -- (C3) -- (D) -- cycle;
\draw [face] (B2) -- (B3) -- (D) -- (C3) -- (C2) -- cycle;
\end{tikzpicture}
\caption{Example of a budget uncertainty set with $L = 3$ budget constraints.}
\label{figure:set_budget}
\end{figure}

The budget set $\mathcal{Q}_B$ reflects the belief that the customer demand $q_i$ can individually vary between $\ubar{q}_i$ and $\obar{q}_i$, but the cumulative customer demand over various subsets cannot exceed a certain limit.
Intuitively, this belief is rooted in the fact that unless the customer demands exhibit perfect correlations, it is unlikely that they will attain their maximum values simultaneously.
Statistically, budget sets are motivated from limit laws of probability, such as the \emph{central limit theorem}.
Indeed, if the customer demands are independent random variables with means $q^0_i$ and variances $\sigma^2_i$, for $i \in V_C$, then under mild technical conditions, the Lyapunov central limit theorem implies that for sufficiently large $\abs{B_l}$, the sum of the normalized customer demands in the set $B_l$ converges in distribution to a standard normal random variable.
Stated differently, the inequality
\begin{equation*}
\sum_{i \in B_l} q_i \leq \sum_{i \in B_l} q^0_i +  \Phi^{-1}(\gamma)s_l, \;\; \text{where } s_l^2 = \sum_{i \in B_l} \sigma_i^2,
\end{equation*}
is satisfied with probability $\gamma \in (0, 1)$, where $\Phi^{-1}(\cdot)$ denotes the inverse cumulative distribution function of the standard normal random variable.
One can verify that this inequality can be incorporated as a budget constraint in the uncertainty set $\mathcal{Q}_B$.
Thus, we can use standard statistical tools to estimate $q^0_i$ and $\sigma$, and control the size of the uncertainty set using the probability level $\gamma$.
The shape of the uncertainty set can be controlled by selecting appropriate customer subsets $B_l$.
For example, these could represent geographical regions such as municipalities, counties or states.

In the most general case, computing the worst-case load~\eqref{eq:worst_case_problem} for the budget uncertainty set amounts to solving a \emph{fractional packing problem}, for which a $(1+\epsilon)$-approximate solution can be computed in $\mathcal{O}(\epsilon^{-2}Ln)$ time, assuming $L \geq n$~\cite{Young2001}.
A much more efficient computation of the worst-case load is possible if we make the additional assumption that the customer subsets $B_l$ are pairwise \emph{disjoint}, that is, $B_l \cap B_{l'} = \emptyset$ for all $l \neq l'$.
In the remainder of the paper, we shall therefore assume that this additional requirement is satisfied.

\subsubsection{Factor models}
Consider the uncertainty set of the following form (see also Figure~\ref{figure:set_factor_model}):
\begin{equation}\label{eq:set_factor_model}
\mathcal{Q}_F = \left\{
q \in \mathbb{R}^n : q = q^0 + \Psi \xi \text{ for some } \xi \in \Xi_F
\right\}, \text{where }
\Xi_F = \left\{\xi \in [-1, 1]^F: \big\lvert{e^\top \xi}\big\rvert \leq \beta F \right\}.
\end{equation}
Here, $q^0 \in \mathbb{R}^n_{+}$, $F \in \mathbb{N}$, $\Psi \in \mathbb{R}^{n \times F}$ and $\beta \in [0, 1]$ are parameters that need to be specified by the modeler. Note that $e \in \mathbb{R}^F$ denotes the vector of ones.

\begin{figure}[!htb]
\centering
\tdplotsetmaincoords{70}{-110} 
\begin{tikzpicture}[scale=2.5,%
                    tdplot_main_coords,%
                    face/.style={fill opacity=0.5,fill=green!80!blue},%
                    scenario/.style={circle,inner sep=1pt,draw=red!80!blue,fill=red!80!blue}]
\draw[-latex] (1.5,1.4,0.75) -- (0.5,1.4,0.75) node[anchor=north west]{$q_1$};
\draw[-latex] (1.5,1.4,0.75) -- (1.5,0.0,0.75) node[anchor=west]{$q_2$};
\draw[-latex] (1.5,1.4,0.75) -- (1.5,1.4,1.50) node[anchor=south]{$q_3$};

\coordinate (O) at (1,1,1);
\coordinate (A1) at ( 1.4 , 1.6 , 1  ); %
\coordinate (A2) at ( 1.52, 1.68, 1.2); %
\coordinate (A3) at ( 0.88, 0.72, 1.2);
\coordinate (A4) at ( 0.6 , 0.4 , 1  );
\coordinate (A5) at ( 0.48, 0.32, 0.8);
\coordinate (A6) at ( 1.12, 1.28, 0.8);

\draw [face] (A1) -- (A2) -- (A3) -- (A4) -- (A5) -- (A6) -- cycle;
\node [scenario] at (O) {};
\node [above right = -1.35mm and 0.1mm of O,color=red!80!blue] (O) {$q^0$};
\end{tikzpicture}
\caption{Example of a factor model uncertainty set with $F = 2$ factors.}
\label{figure:set_factor_model}
\end{figure}
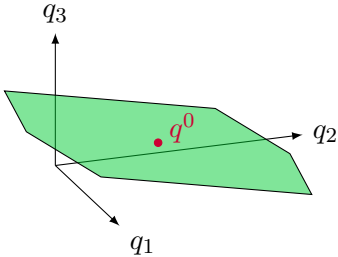

The uncertainty set~\eqref{eq:set_factor_model} stipulates that the unknown customer demands $q$ are distributed around a nominal demand vector $q^0$, subject to an additive disturbance of $\Psi \xi$.
This disturbance is a linear combination of independent factors $\xi_1, \ldots, \xi_F$ that reside in the $F$-dimensional hypercube.
Typically, one has $F \ll n$, so that the linear operator $\Psi$ allows us to model correlations in the (possibly high dimensional space of) customer demands through correlations in the low-dimensional space of the factors.
The matrix $\Psi$ is also known as the \emph{factor loading matrix} and whenever historical data is available, it can be constructed using statistical tools such as principal components analysis or factor analysis.
The constraint $\big\lvert{e^\top \xi}\big\rvert \leq \beta F$ reflects the belief that not all of the independent factors can simultaneously attain their extreme values.
For example, setting $\beta = 0$ will enforce that as many factors will be above $0$ as there will be below $0$, and the resulting factor model has also been referred to as ``zero-net-alpha adjustment'' in portfolio optimization~\cite{Ceria2006}.
Similarly, setting $\beta = 1$ will reduce $\Xi_F$ to an $F$-dimensional hypercube.
In general, whenever historical data is available, an appropriate value of $\beta$ can be chosen by combining a central limit law or tail bound with an \emph{a priori} confidence level, as discussed in Section~\ref{sec:wc_evaluation:uncertainty_sets:budgets}.

\subsubsection{Ellipsoidal sets}
Consider the uncertainty set of the following form (see also Figure~\ref{figure:set_ellipsoid}):
\begin{equation}\label{eq:set_ellipsoid}
\mathcal{Q}_E = \left\{
q \in \mathbb{R}^n : q = q^0 + \Sigma^{1/2} \xi \text{ for some } \xi \in \Xi_E
\right\}, \text{where }
\Xi_E = \left\{\xi \in \mathbb{R}^n: \xi^\top \xi \leq 1 \right\}.
\end{equation}
Here, $q^0 \in \mathbb{R}^n_{+}$ and $\Sigma \in \mathbb{S}^n_{+}$ are parameters that need to be specified by the modeler, and $\mathbb{S}^n_{+}$ denotes the cone of symmetric positive semidefinite matrices.
Whenever $\Sigma$ is a diagonal matrix, we shall refer to the resulting uncertainty set as an \emph{axis-parallel} ellipsoidal set.

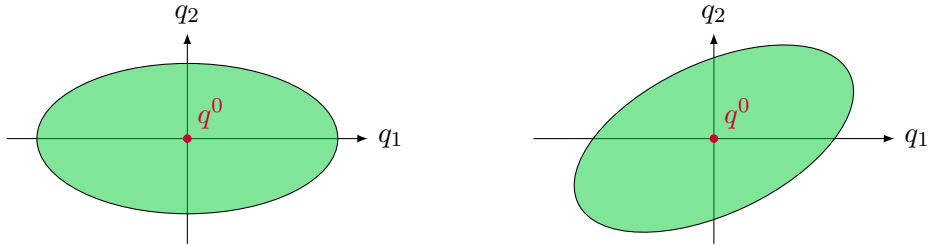
\begin{figure}[!htb]
\centering
\tdplotsetmaincoords{70}{-105}
\begin{tikzpicture}[face/.style={fill opacity=0.5,fill=green!80!blue},%
                    scenario/.style={circle,inner sep=1pt,draw=red!80!blue,fill=red!80!blue}]
\begin{scope}[scale=2.0]
\draw[-latex] (-1.2,0) -- (1.2,0) node[anchor=west]{$q_1$};
\draw[-latex] (0,-0.7) -- (0,0.7) node[anchor=south]{$q_2$};
\draw [face] (0,0) ellipse (1 and 0.5);
\node [scenario] at (0, 0) {};
\node [above right,color=red!80!blue] at (0,0) {$q^0$};
\end{scope}
\begin{scope}[scale=2.0,xshift=3.5cm,rotate=25.4] 
\draw[-latex] (-1.0841,0.5146) -- (1.0841,-0.5146) node[anchor=west]{$q_1$};
\draw[-latex] (-0.3,-0.632) -- (0.3,0.632) node[anchor=south]{$q_2$};
\draw [face] (0,0) ellipse (1 and 0.5);
\node [scenario] at (0, 0) {};
\node [above right,color=red!80!blue] at (0,0) {$q^0$};
\end{scope}
\end{tikzpicture}
\caption{Example of an axis-parallel (left) and general (right) ellipsoidal set.}
\label{figure:set_ellipsoid}
\end{figure}

The uncertainty set~\eqref{eq:set_ellipsoid} stipulates that the customer demands can only attain values in an ellipsoid centered around a nominal demand vector $q^0$.
We note that if the matrix $\Sigma$ is singular, then this ellipsoid is degenerate.
Otherwise, the set $\mathcal{Q}_E$ can be equivalently represented as follows:
\begin{equation*}
\mathcal{Q}_E = \left\{
q \in \mathbb{R}^n : \left(q - q^0 \right)^\top \Sigma^{-1} \left(q - q^0 \right) \leq 1
\right\}.
\end{equation*}
This uncertainty set reflects the belief that the customer demand vector $q$ is a multivariate normal random variable with (unknown) mean $\mu$ and (unknown) covariance matrix $\Sigma^\text{true}$.
The resulting ellipsoid can therefore be identified as a confidence region for the unknown mean $\mu$.
Stated differently, suppose $D > n$ records of historical data are available, and $q^0$ and $\Sigma$ denote the associated sample mean and (unbiased) sample covariance, respectively.
Then, the inequality,
\begin{equation*}
\left(\mu - q^0\right)^\top \Sigma^{-1} (\mu - q^0) \leq \frac{n}{D - n}\frac{D - 1}{D} H_{n;D-n}^{-1}(\gamma),
\end{equation*}
is satisfied with probability $\gamma \in (0, 1)$, where $H_{n;D-n}^{-1}$ denotes the inverse cumulative distribution function of the $F$-distribution with parameters $n$ and $D - n$.
Thus, we can control the size of the ellipsoidal uncertainty set using the probability level $\gamma$.

\subsubsection{Cardinality-constrained sets}
Consider the uncertainty set of the form (see also Figure~\ref{figure:set_cardinality_constrained}):
\begin{equation}\label{eq:set_cardinality_constrained}
\mathcal{Q}_G = \left\{
q \in [q^0, q^0 + \hat{q}] : q = q^0 + (\hat{q} \circ \xi) \text{ for some } \xi \in \Xi_G
\right\}, \text{where }
\Xi_G = \left\{\xi \in [0, 1]^n: e^\top \xi \leq \Gamma \right\}.
\end{equation}
Here, $q^0 \in \mathbb{R}^n_{+}$, $\hat{q} \in \mathbb{R}^n_{+}$ and $\Gamma \in [0, n]$ are parameters that need to be specified by the modeler. Note that $e \in \mathbb{R}^n$ denotes the vector of ones and $(\hat{q} \circ \xi) \in \mathbb{R}^n$ denotes the Hadamard product between vectors $\hat{q}$ and $\xi$; that is, $(\hat{q} \circ \xi)_i = \hat{q}_i \xi_i$ for all $i = 1, \ldots, n$.

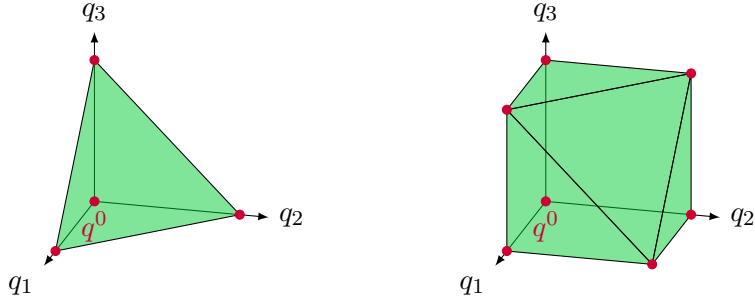
\begin{figure}[!htb]
\centering
\tdplotsetmaincoords{70}{105}
\begin{tikzpicture}[face/.style={fill opacity=0.5,fill=green!80!blue},%
                    scenario/.style={circle,inner sep=1.2pt,draw=red!80!blue,fill=red!80!blue}]
\begin{scope}[scale=2.0,tdplot_main_coords]
\draw[-latex] (0,0,0) -- (1.3,0,0) node[anchor=north east]{$q_1$};
\draw[-latex] (0,0,0) -- (0,1.2,0) node[anchor=west]{$q_2$};
\draw[-latex] (0,0,0) -- (0,0,1.2) node[anchor=south]{$q_3$};

\coordinate (O) at (0,0,0);
\coordinate (A1) at (0,0,1);
\coordinate (B1) at (1,0,0);
\coordinate (C1) at (0,1,0);

\draw [face] (A1) -- (B1) -- (C1) -- cycle;

\node [scenario] at (O) {};
\node [scenario] at (A1) {};
\node [scenario] at (B1) {};
\node [scenario] at (C1) {};
\node [below,color=red!80!blue] at (0,0) {$q^0$};
\end{scope}
\begin{scope}[scale=2.0,tdplot_main_coords,xshift=3cm]
\draw[-latex] (0,0,0) -- (1.3,0,0) node[anchor=north east]{$q_1$};
\draw[-latex] (0,0,0) -- (0,1.2,0) node[anchor=west]{$q_2$};
\draw[-latex] (0,0,0) -- (0,0,1.2) node[anchor=south]{$q_3$};

\coordinate (O) at (0,0,0);
\coordinate (A1) at (0,0,1);
\coordinate (B1) at (1,0,0);
\coordinate (C1) at (0,1,0);
\coordinate (AB) at (1,0,1);
\coordinate (BC) at (1,1,0);
\coordinate (CA) at (0,1,1);

\draw [face] (AB) -- (BC) -- (CA) -- cycle;
\draw [face] (A1) -- (AB) -- (CA) -- cycle;
\draw [face] (B1) -- (AB) -- (BC) -- cycle;
\draw [face] (C1) -- (CA) -- (BC) -- cycle;

\node [scenario] at (O) {};
\node [scenario] at (A1) {};
\node [scenario] at (B1) {};
\node [scenario] at (C1) {};
\node [scenario] at (AB) {};
\node [scenario] at (BC) {};
\node [scenario] at (CA) {};
\node [below,color=red!80!blue] at (0,0) {$q^0$};
\end{scope}
\end{tikzpicture}
\caption{Example of a cardinality-constrained set with $\Gamma = 1$ (left) and $\Gamma = 2$ (right).}
\label{figure:set_cardinality_constrained}
\end{figure}

The uncertainty set stipulates that each customer demand $q_i$ can deviate from its nominal value $q^0_i$ by up to $\hat{q}_i$.
However, the total number of demands that can simultaneously deviate from their nominal values is bounded by $\ceil{\Gamma}$; of these, $\floor{\Gamma}$ customer demands can maximally deviate up to $\hat{q}$ while one customer demand can deviate up to $(\Gamma - \floor{\Gamma})\hat{q}_i$.
For example, if we set $\Gamma = 0$, then the uncertainty set reduces to a singleton $\mathcal{Q}_G = \{q^0\}$, whereas if we set $\Gamma = n$, then the uncertainty set becomes the $n$-dimensional hyperrectangle $[q^0, q^0 + \hat{q}]$.
Observe that $\Xi_G$ is the convex hull of the set $\big\{\xi \in [0, 1]^n : \lVert \xi \rVert_0 \leq \Gamma \big\}$, where $\lVert \xi \rVert_0$ counts the number of non-zero elements in $\xi$.
Therefore, the inequality $e^\top \xi \leq \Gamma$ may be interpreted as constraining the number of elements which may simultaneously deviate from their nominal values, which explains the name ``cardinality-constrained''.
This uncertainty set was originally proposed in~\cite{Bertsimas2004:price_of_robustness} and is also popularly referred to as a ``budgeted'' or ``gamma'' uncertainty set.

Using a similar argument as in~\cite{Bertsimas2004:price_of_robustness}, it can be shown that if the demand of each customer $i \in V_C$ is a symmetric and bounded random variable $\tilde{q}_i$ that takes values in $\left[q^0_i - \hat{q}_i, q^0_i + \hat{q}_i\right]$, then the \emph{actual} worst-case load of any route $R$ satisfies
\begin{equation*}
\text{Probability}\left(\sum_{i \in R} \tilde{q}_i \leq \max_{q \in \mathcal{Q}_G} \sum_{i \in R} q_i \right) \geq 1 - \exp(-\Gamma^2/2n).
\end{equation*}
In other words, if up to $\Gamma$ customer demands actually deviate from their nominal values, then the actual worst-case load of route $R$ is bounded by the quantity $\max\limits_{q \in \mathcal{Q}_G} \sum_{i \in R} q_i$, whereas even if more than $\Gamma$ customer demands deviate from their nominal values, then the actual worst-case load is bounded with very high probability.
We note that a tighter probabilistic bound has been established in~\cite{Bertsimas2004:price_of_robustness}.
In practice, one can use any of these bounds to determine a suitable value of $\Gamma$.

\subsubsection{Discrete sets}
Consider the uncertainty set of the following form (see also Figure~\ref{figure:set_discrete}):
\begin{equation}\label{eq:set_discrete}
\mathcal{Q}_D = \mathop{\text{conv}}\left\{
q^{(j)} : j = 1,\ldots,D
\right\},
\end{equation}
where $\mathop{\text{conv}}(\cdot)$ denotes the convex hull of a finite set of points.
Here, $q^{(1)}, \ldots q^{(D)} \in \mathbb{R}^n_{+}$ are $D \in \mathbb{N}$ distinct realizations of the uncertain customer demands that need to be specified by the modeler.

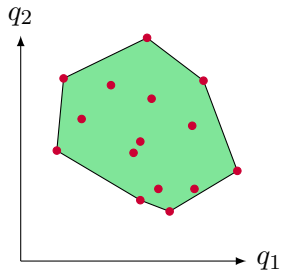
\begin{figure}[!htb]
\centering
\begin{tikzpicture}[scale=3.0,%
                    face/.style={fill opacity=0.5,fill=green!80!blue},%
                    scenario/.style={circle,inner sep=1pt,draw=red!80!blue,fill=red!80!blue}]
\draw[-latex] (0,0) -- (1,0) node[anchor=west]{$q_1$};
\draw[-latex] (0,0) -- (0,1) node[anchor=south]{$q_2$};

\coordinate (P0) at (0.53,0.27);
\coordinate (P1) at (0.53,0.53);
\coordinate (P2) at (0.16,0.49);
\coordinate (P3) at (0.19,0.81);
\coordinate (P4) at (0.66,0.22);
\coordinate (P5) at (0.27,0.63);
\coordinate (P6) at (0.56,0.99);
\coordinate (P7) at (0.50,0.48);
\coordinate (P8) at (0.81,0.80);
\coordinate (P9) at (0.96,0.40);
\coordinate (P10) at (0.76,0.60);
\coordinate (P11) at (0.40,0.78);
\coordinate (P12) at (0.61,0.32);
\coordinate (P13) at (0.58,0.72);
\coordinate (P14) at (0.77,0.32);

\draw [face] (P6) -- (P3) -- (P2) -- (P0) -- (P4) -- (P9) -- (P8) -- cycle;

\node[scenario] at (P0) {};
\node[scenario] at (P1) {};
\node[scenario] at (P2) {};
\node[scenario] at (P3) {};
\node[scenario] at (P4) {};
\node[scenario] at (P5) {};
\node[scenario] at (P6) {};
\node[scenario] at (P7) {};
\node[scenario] at (P8) {};
\node[scenario] at (P9) {};
\node[scenario] at (P10) {};
\node[scenario] at (P11) {};
\node[scenario] at (P12) {};
\node[scenario] at (P13) {};
\node[scenario] at (P14) {};
\end{tikzpicture}
\caption{Example of a discrete uncertainty set with $D = 15$ data points.}
\label{figure:set_discrete}
\end{figure}

The uncertainty set stipulates that the customer demands will take values inside the convex hull of $D$ \emph{a priori} specified demand vectors in $n$-dimensional space.
This set is expected to be meaningful only if sufficient historical records are available; that is, if $D$ is sufficiently large.
However, care must be taken to discard statistical outliers in order to avoid becoming overly risk-averse.
More generally, one may want to consider only a small subset of all historically observed realizations by uniformly sampling a fraction $\gamma \in (0, 1)$ of the available realizations.

\subsection{Closed-Form Expressions of the Worst-Case Load}\label{sec:wc_evaluation:closed_form_expressions}
The key result of this section is that the worst-case load of a vehicle route can be computed in closed-form for each of the five classes of uncertainty sets described in the previous section. This formalized in Proposition~\ref{prop:closed_form_expressions}.
\begin{prop}\label{prop:closed_form_expressions}
Suppose $R$ is a given route and $S \subseteq V_C$ is the set of customers visited on $R$. Then, the worst-case load of route $R$ over the uncertainty sets $\mathcal{Q}_B$, $\mathcal{Q}_F$, $\mathcal{Q}_E$, $\mathcal{Q}_G$ and $\mathcal{Q}_D$ is as follows.
\begingroup
\captionsetup{type=table}
\captionof{table}{Closed-form expressions of the worst-case load of a vehicle route.\label{table:closed_form_expressions}}
\vspace*{-\baselineskip}
\endgroup
\begingroup
\allowdisplaybreaks
\begin{alignat}{2}
\toprule
&\mathcal{Q}   &\qquad& \max_{q \in \mathcal{Q}} \sum_{i \in R} q_i \notag \\ \midrule
&\mathcal{Q}_B && \sum_{i \in S} \obar{q}_i - \sum_{l = 1}^L \max \left\{0, \sum_{i \in S \cap B_l} \big(\obar{q}_i - \ubar{q}_i\big) - \left(b_l - \sum_{i \in B_l} \ubar{q}_i \right)\right\} \label{eq:closed_form_expression_budget}\\
&\mathcal{Q}_F && \sum_{i \in S} q^0_i - \min \left\{\sum_{f = 1}^F \sum_{i \in S} \Psi_{if} - \lambda + \beta F \abs{\lambda} : \lambda \in \left\{0, \sum_{i \in S} \Psi_{if_{\ell^{+}}},\sum_{i \in S} \Psi_{if_{\ell^{-}}} \right\} \right\}, \label{eq:closed_form_expression_factor} \\
&              && \text{\small where $f_1, \ldots f_F$ represents an ordering of the factors such that $\displaystyle\sum_{i \in S} \Psi_{if_1} \geq \ldots \geq \sum_{i \in S} \Psi_{if_F}$}, \notag \\[-2mm]
&              && \text{\small and $\ell^{+} = \ceil{(1 + \beta)F/2}$, $\ell^{-} = \max\{1, \ceil{(1 - \beta)F / 2}\}$.} \notag \\
&\mathcal{Q}_E && \sum_{i \in S} q^0_i + \bigg\lVert \sum_{i \in S} \Sigma^{1/2}_i \bigg\rVert_2, \label{eq:closed_form_expression_ellipsoid} \\
&              && \text{\small where $\Sigma^{1/2}_i$ denotes the $i^\text{th}$ column of $\Sigma^{1/2}$ and $\lVert \cdot \rVert_2$ denotes the $\ell_2$-norm of a vector.} \notag \\
&\mathcal{Q}_G && \sum_{i \in S} q^0_i + \sum_{\ell = 1}^{\min\{\abs{S}, \floor{\Gamma}\}} \hat{q}_{g_{\ell}} + \lambda, \label{eq:closed_form_expression_gamma} \\
&              && \text{\small where $g_1, \ldots g_{\abs{S}}$ represents an ordering of the customers in $S$ such that $\displaystyle \hat{q}_{g_1} \geq \ldots \geq \hat{q}_{g_{\abs{S}}}$}, \notag \\[-2mm]
&              && \text{\small and $\displaystyle \lambda = \left(\Gamma - \floor{\Gamma}\right)\hat{q}_{g_{\floor{\Gamma} + 1}}$, if $\abs{S} \geq \floor{\Gamma} + 1$ and $0$ otherwise.} \notag \\
&\mathcal{Q}_D && \max \left\{\sum_{i \in S} q^{(d)}_i : d \in \{1,\ldots,D\} \right\} \label{eq:closed_form_expression_discrete} \\ \bottomrule
&&&\notag
\end{alignat}
\endgroup
\end{prop}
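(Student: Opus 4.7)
The plan is to treat the five uncertainty sets one at a time, since the proposition is really five largely independent claims that happen to be packaged together. In each case, I would substitute the parameterization of $\mathcal{Q}$ into the worst-case problem~\eqref{eq:worst_case_problem} and reduce it to a structured optimization whose optimum can be written down in closed form. The common first move is to observe that $\sum_{i\in R} q_i = e_S^\top q$ depends on $R$ only through the index set $S$ of customers visited, so throughout we may work with the incidence vector $e_S \in \{0,1\}^n$.

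The two easiest cases are $\mathcal{Q}_E$ and $\mathcal{Q}_D$. For $\mathcal{Q}_E$, substituting $q = q^0 + \Sigma^{1/2}\xi$ turns the problem into $\max_{\|\xi\|_2\le 1} (e_S^\top \Sigma^{1/2})\xi$; this maximum of a linear functional over the unit ball equals $\|(\Sigma^{1/2})^\top e_S\|_2$, and symmetry of $\Sigma^{1/2}$ together with $\Sigma^{1/2} e_S = \sum_{i\in S}\Sigma^{1/2}_i$ yields~\eqref{eq:closed_form_expression_ellipsoid}. For $\mathcal{Q}_D$, the objective is linear and the feasible region is a polytope with vertices $\{q^{(d)}\}$, so~\eqref{eq:closed_form_expression_discrete} is immediate. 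The cardinality case $\mathcal{Q}_G$ reduces to a continuous knapsack: after noting that variables $\xi_i$ with $i\notin S$ only consume budget without contributing to the objective, one is left with $\max \sum_{i\in S}\hat{q}_i\xi_i$ subject to $\xi\in[0,1]^{|S|}$ and $\sum_{i\in S}\xi_i\le \Gamma$. A standard greedy/exchange argument orders customers by decreasing $\hat{q}_i$, saturates the first $\lfloor\Gamma\rfloor$, and devotes the fractional remainder $\Gamma-\lfloor\Gamma\rfloor$ to the next one, giving~\eqref{eq:closed_form_expression_gamma}; the $|S|<\lfloor\Gamma\rfloor$ corner case is handled by the explicit $\min\{|S|,\lfloor\Gamma\rfloor\}$ in the sum.

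For $\mathcal{Q}_B$ I would exploit the standing assumption that the budgets are pairwise disjoint, which decouples the inner maximum into one subproblem per $B_l$ plus a trivial contribution $\obar{q}_i$ from each customer belonging to no budget. Within a single budget, the subproblem is $\max \sum_{i\in S\cap B_l}q_i$ over $q_i\in[\ubar{q}_i,\obar{q}_i]$ with $\sum_{i\in B_l}q_i\le b_l$. Setting $q_i=\ubar{q}_i$ for $i\in B_l\setminus S$ is clearly optimal (these customers only consume budget), and the remaining one-constraint bounded LP has optimum $\min\{\sum_{i\in S\cap B_l}\obar{q}_i,\,b_l-\sum_{i\in B_l\setminus S}\ubar{q}_i\}$. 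Rewriting the gap between $\sum_{i\in S\cap B_l}\obar{q}_i$ and this optimum as a $\max\{0,\cdot\}$ expression produces the slack term in~\eqref{eq:closed_form_expression_budget}.

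The hard case, and where I would spend most of the work, is the factor model $\mathcal{Q}_F$. Substituting $q=q^0+\Psi\xi$ leaves an LP $\max \psi^\top \xi$ with $\psi_f=\sum_{i\in S}\Psi_{if}$, $\xi\in[-1,1]^F$ and $|e^\top\xi|\le\beta F$. I would dualize the coupling constraint with a scalar multiplier $\lambda$ (combining the $\pm$ sides so $|\lambda|$ appears) to obtain, by LP strong duality, $\min_{\lambda\in\mathbb{R}}\bigl\{\sum_f |\psi_f-\lambda| + \beta F|\lambda|\bigr\}$, which is exactly the objective inside the $\min$ in~\eqref{eq:closed_form_expression_factor}. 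The remaining combinatorial step is to argue that this convex piecewise-linear univariate function attains its minimum at one of only three breakpoints: zero (where the $\beta F|\lambda|$ term changes slope) and the two order statistics of $\{\psi_f\}$ at positions $\ell^{+}=\lceil(1+\beta)F/2\rceil$ and $\ell^{-}=\max\{1,\lceil(1-\beta)F/2\rceil\}$, which are precisely the indices where the subgradient of $\sum_f|\psi_f-\lambda|+\beta F|\lambda|$ changes sign, once on each side of zero. Verifying this optimality characterization by inspecting the left- and right-derivatives in the sorted order of $\psi_f$ is the main technical step; the other four cases are essentially bookkeeping around standard LP facts.
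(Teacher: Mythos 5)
Your plan is sound and, for three of the five cases, essentially identical to the paper's proof: for $\mathcal{Q}_E$ the paper also reduces to maximizing a linear functional over the unit ball (it invokes Slater plus KKT where you invoke the dual-norm identity, which is the same computation); for $\mathcal{Q}_G$ both arguments are the Dantzig greedy solution of the continuous knapsack; and for $\mathcal{Q}_D$ both use the vertex-optimality of a linear objective over a polytope. The genuine difference is in the remaining two cases: the paper disposes of $\mathcal{Q}_B$ and $\mathcal{Q}_F$ with a single sentence citing prior work of Gounaris et al., whereas you supply self-contained derivations. Your budget-set argument is correct and complete as sketched --- the pairwise-disjointness assumption decouples the problem, fixing $q_i=\ubar{q}_i$ for $i\in B_l\setminus S$ is optimal, and the identity $\min\{a,b\}=a-\max\{0,a-b\}$ converts the per-budget optimum $\min\bigl\{\sum_{i\in S\cap B_l}\obar{q}_i,\; b_l-\sum_{i\in B_l\setminus S}\ubar{q}_i\bigr\}$ into exactly the slack term of~\eqref{eq:closed_form_expression_budget}. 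Your factor-model argument (dualize the coupling constraint, obtain a convex piecewise-linear univariate minimization, and locate the minimizer among the breakpoints $0$, $\psi_{f_{\ell^+}}$, $\psi_{f_{\ell^-}}$ by a subgradient sign analysis) is the right route and is presumably how the cited reference proceeds, but note that you explicitly defer the one nontrivial step --- verifying that the subgradient of $\sum_f\lvert\psi_f-\lambda\rvert+\beta F\lvert\lambda\rvert$ changes sign precisely at the stated order statistics on each side of zero --- so that case remains a plan rather than a proof. What your approach buys is a fully self-contained proposition; what the paper buys is brevity by outsourcing the two most intricate cases.
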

\vspace*{-\baselineskip}
\begin{proof}
The validity of the expressions for $\mathcal{Q} = \mathcal{Q}_B$ and $\mathcal{Q} = \mathcal{Q}_F$ has been shown in~\cite{Gounaris2013:OR}.

Suppose that $\mathcal{Q} = \mathcal{Q}_E$.
In this case, the worst-case problem~\eqref{eq:worst_case_problem} can be reformulated as follows:
\begin{equation*}
\sum_{i \in S} q^0_i + \max_{\xi \in \mathbb{R}^n} \left\{ \sum_{i \in S} \xi^\top \Sigma^{1/2}_i : \xi^\top \xi \leq 1 \right\}.
\end{equation*}
The above maximization problem is a convex optimization problem that satisfies Slater's constraint qualification (e.g., $0 \in \mathbb{R}^n$ is strictly feasible). Therefore, the Karush-Kuhn-Tucker conditions are both necessary and sufficient to characterize its optimal solution. This leads to expression~\eqref{eq:closed_form_expression_ellipsoid}.

Suppose that $\mathcal{Q} = \mathcal{Q}_G$.
In this case, the worst-case problem~\eqref{eq:worst_case_problem} can be reformulated as follows:
\begin{equation*}
\sum_{i \in S} q^0_i + \max_{\xi \in \mathbb{R}^n_{+}} \left\{ \sum_{i \in S} \hat{q}_i \xi_i : \xi_i \leq 1 \; \forall i = 1, \ldots, n, \;\;\; \sum_{i = 1}^n \xi_i \leq \Gamma \right\}.
\end{equation*}
The above maximization problem is an instance of a \emph{fractional knapsack} problem with $n$ items, each with unit weight; the value of item $i$ is $\hat{q}_i \mathbb{I}[i \in S]$.
It is well known (e.g., see~\cite{Dantzig1957:fractional_knapsack}) that this problem can be solved by a greedy algorithm that considers the items in non-increasing order of their values per unit weight, i.e., in non-increasing order of the $\hat{q}$ values of the items in $S$.
A straightforward application of this greedy algorithm leads to expression~\eqref{eq:closed_form_expression_gamma}.

Suppose now that $\mathcal{Q} = \mathcal{Q}_D$. Since $\mathcal{Q}_D$ is a bounded polytope and the objective function of the worst-case problem~\eqref{eq:worst_case_problem} is linear, the optimal solution is attained at a vertex of $\mathcal{Q}_D$. This leads to expression~\eqref{eq:closed_form_expression_discrete}, since the set of vertices of $\mathcal{Q}_D$ is a subset of the $D$ points that parametrize it.
\end{proof}

Proposition~\ref{prop:closed_form_expressions} suggests that the worst-case load of a route can be computed much faster by evaluating the associated closed-form expressions than by invoking a general-purpose optimization solver.
Furthermore, if we know the worst-case load of a route $R$ that visits a certain subset of customers $S \subseteq V_C$, and we would like to calculate the worst-case load of a route $R'$ that visits exactly one additional or fewer customer $i \in V_C$, i.e., if $R'$ visits $S \cup \{i\}$ or $S \setminus \{i\}$, then the worst-case load can be \emph{incrementally updated} even faster by using appropriate data structures.
This is formalized in Proposition~\ref{prop:cpu_and_memory_requirements}.

\begin{prop}\label{prop:cpu_and_memory_requirements}
The time and storage complexities shown in Table~\ref{table:cpu_and_memory_requirements} can be achieved for computing the worst-case load of a route that visits a set of customers $S$. Here, ``incremental update'' refers to the complexity of updating the worst-case load when a single customer is added to or removed from $S$.
\begin{table}[!h]
\centering
\caption{Time and storage complexities for computing the worst-case load of a vehicle route.}
\begin{tabularx}{0.9\textwidth}{lCCC}
\toprule
\multirow{2}{*}{$\mathcal{Q}$} & From scratch & \multicolumn{2}{c}{Incremental update} \\
\cmidrule(r){2-2} \cmidrule(l){3-4}
& Time & Time & Storage \\
\midrule
$\mathcal{Q}_B$ & $\mathcal{O}(\abs{S})$ & $\mathcal{O}(1)$ & $\mathcal{O}(L)$ \\
$\mathcal{Q}_F$ & $\mathcal{O}(\abs{S}F + F\log F)$ & $\mathcal{O}(F\log F)$ & $\mathcal{O}(F)$ \\
$\mathcal{Q}_E$ (axis-parallel) & $\mathcal{O}(\abs{S})$ & $\mathcal{O}(1)$ & $\mathcal{O}(1)$ \\
$\mathcal{Q}_E$ (general) & $\mathcal{O}(\abs{S}n)$ & $\mathcal{O}(n)$ & $\mathcal{O}(n)$ \\
$\mathcal{Q}_G$ & $\mathcal{O}(\abs{S})$ & $\mathcal{O}(\log(\abs{S}))^\text{a}$ & $\mathcal{O}(n)^\text{b}$ \\
$\mathcal{Q}_D$ & $\mathcal{O}(\abs{S}D)$ & $\mathcal{O}(D)$ & $\mathcal{O}(D)$ \\ \bottomrule
\multicolumn{4}{l}{\footnotesize$^\text{a,b}$These can be improved to $\mathcal{O}(\log (\Gamma))$ and $\mathcal{O}(\Gamma)$, respectively, if only additions to $S$ are considered.}
\end{tabularx}
\label{table:cpu_and_memory_requirements}
\end{table}
\end{prop}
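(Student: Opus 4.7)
The plan is to exhibit, for each uncertainty set $\mathcal{Q}$, a compact collection of aggregate quantities from which the corresponding closed-form expression in Proposition~\ref{prop:closed_form_expressions} can be evaluated within the claimed incremental time bound. For each row of Table~\ref{table:cpu_and_memory_requirements}, I will specify the data structure to be maintained, confirm that it can be populated from scratch by a single pass over $S$ (with any extra sort or selection step made explicit), and verify that inserting or deleting a single customer propagates through it in the advertised time.

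Four of the five cases follow almost directly. For $\mathcal{Q}_B$, disjointness of the $B_l$ implies that every customer belongs to at most one budget; maintaining the scalar $\sum_{i\in S}\obar{q}_i$, the partial sums $T_l := \sum_{i\in S\cap B_l}(\obar{q}_i-\ubar{q}_i)$, and a running value of $\sum_{l}\max\{0, T_l-(b_l-\sum_{i\in B_l}\ubar{q}_i)\}$ means that a single-customer change touches at most one $T_l$ and adjusts the running value in $O(1)$, while storage is $O(L)$. For $\mathcal{Q}_F$, I keep the $F$ column sums $\sum_{i\in S}\Psi_{if}$, adjusted in $O(F)$ per customer, after which the sort implicit in~\eqref{eq:closed_form_expression_factor} dominates at $O(F\log F)$. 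Axis-parallel $\mathcal{Q}_E$ collapses~\eqref{eq:closed_form_expression_ellipsoid} to $\sqrt{\sum_{i\in S}\Sigma_{ii}}$, so two scalars suffice; the general case maintains the $n$-vector $\sum_{i\in S}\Sigma^{1/2}_i$, updated by a single column add/subtract and its $\ell_2$-norm recomputed, both in $O(n)$. For $\mathcal{Q}_D$, the $D$ scenario loads $\sum_{i\in S}q^{(d)}_i$ are each adjusted in $O(D)$ per customer and maximized by a linear scan. In every case the from-scratch bound is immediate by initializing the aggregates in one pass over $S$.

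The main obstacle is $\mathcal{Q}_G$, whose closed-form~\eqref{eq:closed_form_expression_gamma} depends on the sum of the $\min\{\abs{S},\floor{\Gamma}\}$ largest values of $\hat q_i$ over $i\in S$ together with (when $\abs{S}>\floor{\Gamma}$) the next largest. To achieve $O(\log\abs{S})$ updates I propose an order-statistic balanced binary search tree keyed by $\hat q_i$ for $i\in S$, augmented at each node with the sum of $\hat q$ values in its subtree. Insertions and deletions cost $O(\log\abs{S})$, and a single root-to-leaf descent guided by the stored subtree sizes and subtree sums recovers both the prefix sum of the top $\floor{\Gamma}$ keys and the rank-$(\floor{\Gamma}+1)$ element in $O(\log\abs{S})$ time. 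Since $\abs{S}\le n$ this yields the $O(n)$ storage bound, and the $O(\abs{S})$ from-scratch bound avoids a full sort by invoking a deterministic linear-time selection algorithm (median-of-medians) to identify the rank-$(\floor{\Gamma}+1)$ value and partition around it, after which the required prefix sum is obtained by summing the upper part in linear time.

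For the footnote refinement to $O(\log\Gamma)$ time and $O(\Gamma)$ storage when $S$ is modified only by insertions, the augmented tree can be replaced by a min-heap of capacity at most $\ceil{\Gamma}+1$ that holds the currently largest $\hat q$ values: each incoming customer is compared against the root and either discarded or swapped in, and discarded entries can never re-enter because deletions from $S$ are disallowed, so the bounded-size heap remains correct throughout.
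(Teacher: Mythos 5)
Your proposal is correct, and for five of the six rows ($\mathcal{Q}_B$, $\mathcal{Q}_F$, both ellipsoidal cases, and $\mathcal{Q}_D$) it maintains exactly the running aggregates that the paper's proof uses: the per-budget partial sums exploiting disjointness, the $F$ factor column sums followed by a re-sort, the scalar (resp.\ $n$-vector) $\sum_{i\in S}\Sigma^{1/2}_i$ with its norm, and the $D$ scenario loads. The one genuine divergence is $\mathcal{Q}_G$. The paper maintains a three-part structure: a binary min-heap $h_{+}$ holding the $\min\{\abs{S},\floor{\Gamma}\}$ largest deviations, a scalar $\rho_0$ holding the $(\floor{\Gamma}+1)$-st largest, and a binary max-heap $h_{-}$ holding the remainder, the last of which exists solely to repopulate $\rho_0$ and $h_{+}$ after a deletion; each update is a constant number of heap operations, and discarding $h_{-}$ in the insert-only case yields the $\mathcal{O}(\log\Gamma)$ time and $\mathcal{O}(\Gamma)$ storage of the footnote. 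Your order-statistic balanced BST augmented with subtree sums attains the same bounds with a single uniform structure that answers both the top-$\floor{\Gamma}$ prefix sum and the rank-$(\floor{\Gamma}+1)$ query in one descent; it is somewhat heavier to implement than two heaps but avoids the case analysis of shuttling elements between $h_{+}$, $\rho_0$ and $h_{-}$, and it extends more gracefully if $\Gamma$ were allowed to change. Two minor implementation points: key the tree by the pair $(\hat q_i, i)$ so that ties do not break the rank queries, and note that your deterministic median-of-medians selection for the from-scratch bound plays the same role as the paper's quickselect with an appropriate pivoting strategy. Your bounded-capacity min-heap argument for the insert-only refinement, including the observation that discarded entries can never need to re-enter, is essentially identical to the paper's justification.
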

\begin{proof}
\textbullet\ Consider $\mathcal{Q} = \mathcal{Q}_B$. The time complexity for the ``from scratch'' computation follows directly from expression~\eqref{eq:closed_form_expression_budget}.
To enable incremental updates, we use $\mathcal{O}(L)$ storage to keep track of the following quantities: $\pi = \sum_{i \in S} \obar{q}_i$, $\rho_l = \sum_{i \in S \cap B_l} \big(\obar{q}_i - \ubar{q}_i\big) - \left(b_l - \sum_{i \in B_l} \ubar{q}_i \right)$ for all $l = 1, \ldots, L$, and $z = \max_{q \in \mathcal{Q}} \sum_{i \in S} q_i$. Initially, when $S = \emptyset$, we have $(\pi, \rho_l, z) = (0, -(b_l - \sum_{i \in B_l} \ubar{q}_i), 0)$.
Now, suppose that we have updated this data structure to reflect the worst-case load of an ``old'' route visiting the customer set $S$, and that we would like to calculate the worst-case load of a ``new'' route visiting the customer set $S' = S \cup \{j\}$, where customer $j$ participates in the budget $l_j$ (i.e., $j \in B_{l_j}$). For this, we can perform the following update in $\mathcal{O}(1)$ time: 
$\pi^\text{new} \gets \pi^\text{old} + \obar{q}_j$,
$\rho_{l_j}^\text{new} \gets \rho_{l_j}^\text{old} + (\obar{q}_j - \ubar{q}_j)$,
$z^\text{new} \gets z^\text{old} + (\pi^\text{new} - \pi^\text{old}) - ([\rho^\text{new}_{l_j}]_{+} - [\rho^\text{old}_{l_j}]_{+})$, where $[\cdot]_{+} = \max\{\cdot, 0\}$.
If customer $j$ does not participate in any budget, then we would repeat the same steps except $\rho_l$ is not updated.
A similar update applies when $S' = S \setminus \{j\}$ for some $j \in S$.
We do not present this for the sake of brevity.

\textbullet\ Consider $\mathcal{Q} = \mathcal{Q}_F$. The time complexity for the ``from scratch'' computation follows from the $\mathcal{O}(\abs{S})$ time to calculate $\sum_{i \in S} \Psi_{if}$ for each of the $F$ factors and the $\mathcal{O}(F \log F)$ time to compute the ordering $f_1, \ldots, f_F$.
To enable incremental updates, we use $\mathcal{O}(F)$ storage to keep track of the following quantities: $\pi = \sum_{i \in S} {q}^0_i$, $\rho_f = \sum_{i \in S} \Psi_{if}$ for all $f = 1, \ldots, F$, and $z = \max_{q \in \mathcal{Q}} \sum_{i \in S} q_i$.
Initially, when $S = \emptyset$, we have $(\pi, \rho_f, z) = (0, 0, 0)$.
To calculate the worst-case load of a route visiting the customer set $S' = S \cup \{j\}$, we can perform the following update: 
$\pi^\text{new} \gets \pi^\text{old} + q^0_j$,
$\rho_f^\text{new} \gets \rho_f^\text{old} + \Psi_{jf}$ for each $f = 1, \ldots, F$, and
$z^\text{new} \gets \pi^\text{new} + \sum_{g=1}^F \xi^\text{wc}_g \rho^\text{new}_{f_g}$, where $f_1, \ldots, f_F$ is an updated ordering of the factors according to $\rho^\text{new}_{f_1} \geq \ldots \geq  \rho^\text{new}_{f_F}$, and $\xi^\text{wc} \in \Xi_F$ is defined by first computing 
$P = \sum_{f = 1}^F \mathbb{I}[\rho^\text{new}_{f} \geq 0]$, $N = F - P$, $A = \abs{P - N}$, $M = \min\left\{P, N\right\}$, $T = \floor{(A - \floor{\beta A})/2}$ and $Z = \floor{\beta A} + T$, and then consulting the entries of the following table:
\begin{center}
\small
\begin{tabularx}{0.56\textwidth}{c|c|l}
\toprule
\multicolumn{2}{c|}{Case} & $\xi^\text{wc}$ \\
\midrule
\multirow{2}{*}{$P \geq N$} & $T + Z = A$ & $(e^M, e^Z, -e^T, -e^M)$  \\
& $T + Z \neq A$ & $(e^M, e^Z, +\beta F - \floor{\beta F}, -e^T, -e^M)$ \\
\midrule
\multirow{2}{*}{$P < N$} & $T + Z = A$ & $(e^M, e^T, -e^Z, -e^M)$ \\
& $T + Z \neq A$ & $(e^M, e^T, -\beta F + \floor{\beta F}, -e^Z, -e^M)$ \\
\bottomrule
\end{tabularx}
\end{center}
Here, $e^\text{dim}$ denotes the vector of ones in $\mathbb{R}^\text{dim}$. The overall time complexity of the update is $\mathcal{O}(F \log F)$ and is dictated by the sorting operation needed to compute the ordering $f_1, \ldots, f_F$. \footnote{In practice, the addition of a single customer $j$ to a larger set $S$ is unlikely to significantly change the ordering of the factors $f_1, \ldots, f_F$. We can take advantage of this by keeping track of the ordering and making use of a specialized sorting algorithm for ``almost sorted'' arrays, leading to even faster updates.}

\textbullet\ Consider $\mathcal{Q} = \mathcal{Q}_E$, where $\mathcal{Q}_E$ is axis-parallel; that is, $\Sigma = \text{diag}(\sigma_1^2, \ldots, \sigma_n^2)$ is a diagonal matrix.
In this case, the expression~\eqref{eq:closed_form_expression_ellipsoid} simplifies to $\sum_{i \in S} q^0_i + \sqrt{\sum_{i \in S} \sigma_i^2}$, which can be computed in $\mathcal{O}(\abs{S})$ time.
To enable incremental updates, we use $\mathcal{O}(1)$ storage to keep track of the following quantities: $\pi = \sum_{i \in S} {q}^0_i$, $\rho = \sum_{i \in S} \sigma_i^2$ and $z = \max_{q \in \mathcal{Q}} \sum_{i \in S} q_i$.
Initially, when $S = \emptyset$, we have $(\pi, \rho, z) = (0, 0, 0)$.
To calculate the worst-case load of a route visiting the customer set $S' = S \cup \{j\}$, we can perform the following update in $\mathcal{O}(1)$ time:
$\pi^\text{new} \gets \pi^\text{old} + q^0_j$, 
$\rho^\text{new} \gets \rho^\text{old} + \sigma_j^2$, 
$z^\text{new} \gets \pi^\text{new} + \sqrt{\rho^\text{new}}$.
A similar update applies when $S' = S \setminus \{j\}$ for some $j \in S$.

\textbullet\ Consider now $\mathcal{Q} = \mathcal{Q}_E$, where $\Sigma$ is a general matrix.
In this case, expression~\eqref{eq:closed_form_expression_ellipsoid} can be written as $\sum_{i \in S} q^0_i + \sqrt{\sum_{j = 1}^n \Big(\sum_{i \in S} \Sigma_{ij}^{1/2} \Big)^2 }$, which can be computed in $\mathcal{O}(\abs{S}n)$ time.
To enable incremental updates, we use $\mathcal{O}(n)$ storage to keep track of the following quantities: $\pi = \sum_{i \in S} {q}^0_i$, $\rho_l = \sum_{i \in S} \Sigma_{il}^{1/2}$ for all $l = 1, \ldots, n$, and $z = \max_{q \in \mathcal{Q}} \sum_{i \in S} q_i$.
When $S = \emptyset$, we have $(\pi, \rho_l, z) = (0, 0, 0)$.
To calculate the worst-case load of a route visiting the customer set $S' = S \cup \{j\}$, we can perform the following update in $\mathcal{O}(n)$ time: 
$\pi^\text{new} \gets \pi^\text{old} + q^0_j$, 
$\rho_l^\text{new} \gets \rho_l^\text{old} + \Sigma_{jl}^{1/2}$ for all $l = 1, \ldots, n$, 
$z^\text{new} \gets \pi^\text{new} + \sqrt{\sum_{l = 1}^n \big(\rho_l^{\text{new}}\big)^2}$.
A similar update applies when $S' = S \setminus \{j\}$ for some $j \in S$.

\textbullet\ Consider $\mathcal{Q} = \mathcal{Q}_G$. An examination of expression~\eqref{eq:closed_form_expression_gamma} reveals that we do not need to sort the customers in $S$ with respect to their $\hat{q}$ values. Instead, we only need to identify the subset of $(\floor{\Gamma} + 1)$ customers with the largest $\hat{q}$ values.
This can be achieved in $\mathcal{O}(\abs{S})$ time with a \emph{partition-based selection algorithm}, such as \emph{quickselect} with an appropriate pivoting strategy (e.g., see~\cite{IntroToAlgorithms:2009:3rd_edition}).
To enable incremental updates, we use $\mathcal{O}(n)$ storage to keep track of the following quantities: $\pi = \sum_{i \in S} {q}^0_i$, $s = \min\{\abs{S}, \floor{\Gamma}\}$, array $h^{+} = [\hat{q}_{g_1}, \ldots, \hat{q}_{g_s}]$ implemented as a binary min-heap, array $h_{-} = [\hat{q}_{g_{s + 2}}, \ldots, \hat{q}_{g_{\abs{S}}}]$ implemented as a binary max-heap, $\rho_{+} = $ sum of entries of $h_{+}$, $\rho_{0} = \hat{q}_{g_{s + 1}}$, and $z = \max_{q \in \mathcal{Q}} \sum_{i \in S} q_i$, where we define $h_{-} = \emptyset$, if $\abs{S} \leq \floor{\Gamma} + 1$, and $\rho_0 = 0$, if $s + 1 > \abs{S}$.
Initially, when $S = \emptyset$, we have $(\pi, s, h_{+}, h_{-}, \rho_{+}, \rho_{0}, z) = (0, 0, \emptyset, \emptyset, 0, 0, 0)$.
To calculate the worst-case load of a route visiting the customer set $S' = S \cup \{j\}$, we can perform the following update: 
if $s^\text{old} < \floor{\Gamma}$, then $s^\text{new} \gets s^\text{old} + 1$, $h_{+}^\text{new} \gets h_{+}^\text{old}.\text{insert}(\hat{q}_j)$, $\rho_{+}^\text{new} \gets \rho_{+}^\text{old} + \hat{q}_j$;
else if $\hat{q}_j \leq \rho_{0}$, then $h_{-}^\text{new} \gets h_{-}^\text{old}.\text{insert}(\hat{q}_j)$;
else if $\hat{q}_j < \min (h_{+}^\text{old})$, then $\rho_{0}^\text{new} \gets \hat{q}_j$, $h_{-}^\text{new} \gets h_{-}^\text{old}.\text{insert}(\rho_0^\text{old})$;
else, $h_{+}^\text{new} \gets h_{+}^\text{old}.\text{delete}(\min (h_{+}^\text{old})).\text{insert}(\hat{q}_j)$, $\rho_{+}^\text{new} \gets \rho_{+}^\text{old} + \hat{q}_j - \min (h_{+}^\text{old})$, $\rho_{0}^\text{new} \gets \min (h_{+}^\text{old})$ and $h_{-}^\text{new} \gets h_{-}^\text{old}.\text{insert}(\rho_0^\text{old})$.
In all cases, we also update $\pi^\text{new} \gets \pi^\text{old} + q^0_j$ and $z^\text{new} \gets \pi^\text{new} + \rho_{+}^\text{new} + (\Gamma - \floor{\Gamma}) \rho_0^\text{new}$.
The overall time complexity of the update is $\mathcal{O}(\abs{S})$ and is dictated by the insertion and deletion operations in the heaps $h_{+}$ and $h_{-}$.
Note that $h_{-}$ was not used in determining the value of $z$.
In fact, it is used only when updating the worst-case load after a deletion has occurred.
Consequently, if the quantities are maintained only by the addition of elements into initially empty structures, then $h_{+}$ is sufficient, and since its size is at most $\floor{\Gamma}$, the overall time and storage complexities would be $\mathcal{O}(\log(\Gamma))$ and $\mathcal{O}(\Gamma)$, respectively.

\textbullet\ Consider now $\mathcal{Q} = \mathcal{Q}_D$.
The time complexity for the ``from scratch'' computation follows directly from expression~\eqref{eq:closed_form_expression_discrete}.
To enable incremental updates, we use $\mathcal{O}(D)$ storage to keep track of the following quantities: $\rho_d = \sum_{i \in S} q^{(d)}_i$ for all $d = 1, \ldots, D$, and $z = \max_{q \in \mathcal{Q}} \sum_{i \in S} q_i$.
Initially, when $S = \emptyset$, we have $(\rho_d, z) = (0, 0)$.
To calculate the worst-case load of a route visiting the customer set $S' = S \cup \{j\}$, we can perform the following update in $\mathcal{O}(D)$ time: 
$\rho_d^\text{new} \gets \rho_d^\text{old} + q^{(d)}_j$ for all $d = 1, \ldots, D$, 
$z^\text{new} \gets \max \left\{\rho_d^\text{new} : d = 1, \ldots, D \right\}$.
A similar update applies when $S' = S \setminus \{j\}$ for some $j \in S$.
\end{proof}

\section{Robust Local Search and Metaheuristics}\label{sec:local_search}
The vast majority of metaheuristics for solving large-scale instances of (deterministic) vehicle routing problems are all based on \emph{local search}.
In this section, we illustrate how the results from the previous section allow us to efficiently extend local search to the robust setting.
This robust version of local search can then be incorporated in a modular fashion into any metaheuristic algorithm.
To illustrate this, Appendix~\ref{sec:local_search:ILS} provides details on our implementation of an Iterated Local Search (ILS) metaheuristic, while Appendix~\ref{sec:local_search:AMP} provides details on our implementation of an Adaptive Memory Programming (AMP) metaheuristic. We highlight that our proposed metaheuristic algorithms are simple to implement and adapt, as they introduce few user-defined parameters and they do not incorporate any instance-specific features or spatiotemporal decomposition schemes to accelerate the local search process.

The basis of all local search methods is the repeated use of a set of elementary moves that transform a \emph{current solution} $\hvrpSol$ into a different, \emph{neighbor solution}.
The set of all solutions that can be reached from the current one using a set $Y$ of moves is called the neighborhood of the current solution with respect to the move set, $\Omega_Y\hvrpSol$.
The two major building blocks of all local search methods are therefore, the definition of the \emph{neighborhoods}, and the exploration of the neighborhoods using a \emph{search algorithm}.
The most common neighborhoods are the \emph{node-} and \emph{edge-exchange} neighborhoods that involve the deletion and re-insertion of nodes or edges~\cite{Aarts:1997:LSC:549160,Funke2005} and belong to the family of $k$-Opt and $\lambda$-interchange neighborhoods.
Specific examples include the \emph{relocate}, \emph{exchange} and \emph{2-opt} neighborhoods (see Figure~\ref{figure:move}), with their size $\abs{\Omega_Y\hvrpSol}$, $Y \in \{\text{relocate, exchange, 2-opt}\}$ being $\mathcal{O}(n^2)$.
\begin{figure}[!htb]
\centering
\includegraphics[width=0.45\textwidth]{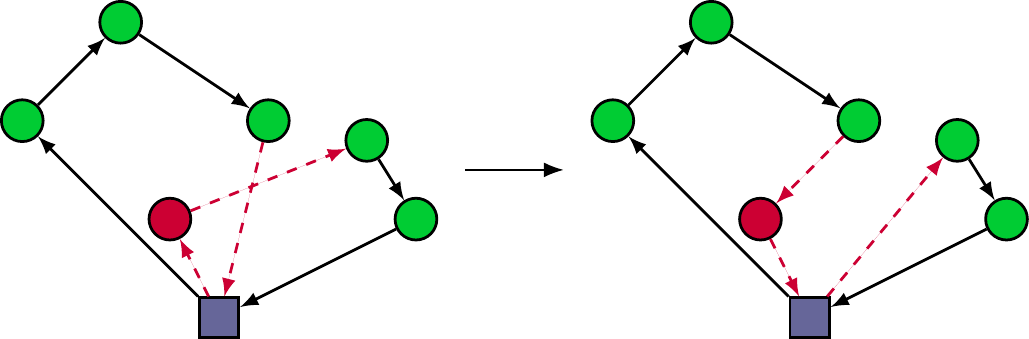}\qquad\quad
\includegraphics[width=0.45\textwidth]{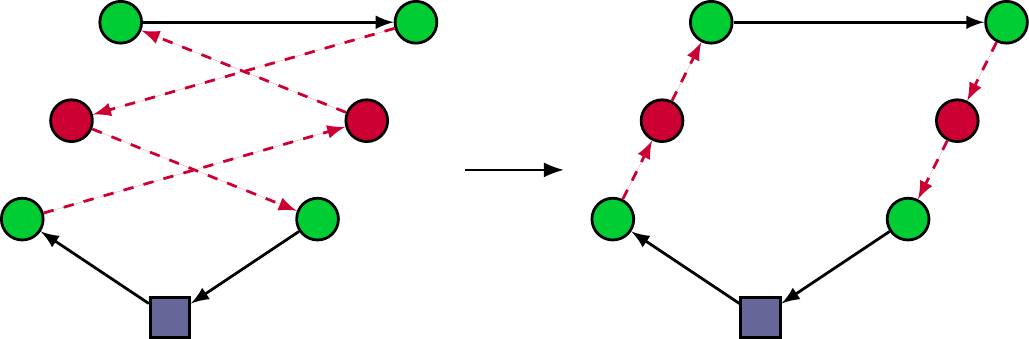}

\medskip

\includegraphics[width=0.45\textwidth]{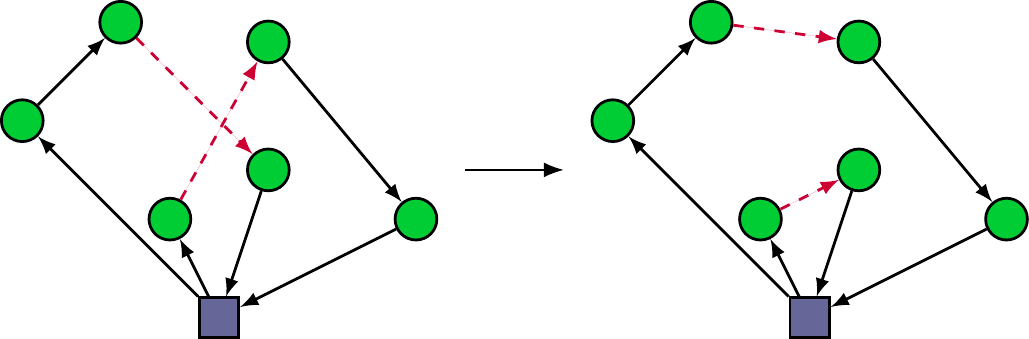}
\caption{Examples of inter-route relocate (top left), intra-route exchange (top right) and inter-route 2-opt (bottom) moves.}
\label{figure:move}
\end{figure}

The goal of a local search algorithm is to efficiently explore the given set of neighborhoods to find \emph{improving neighbor solutions}, i.e., solutions $\hvrpSolx{\prime}$ that are better than the current one $\hvrpSol$. 
To formalize what we mean by ``better'', we shall define the \emph{total penalized cost} of a solution, $\bar{c}\hvrpSol$, as the weighted sum of its transportation costs and worst-case vehicle capacity violations:
\begin{equation}\label{eq:penalized_cost}
\bar{c}\hvrpSol = c\hvrpSol + \varphi^Q p\hvrpSol = c\hvrpSol + \varphi^Q \sum_{h = 1}^H \max \left\{0, \max_{q \in \mathcal{Q}} \sum_{i \in R_h} q_i - Q_{\kappa_h}\right\},
\end{equation}
where $\varphi^Q$ is a large penalty coefficient.
Therefore, a neighbor solution $\hvrpSolx{\prime}$ is said to be improving if $\bar{c}\hvrpSolx{\prime} < \bar{c}\hvrpSol$.
By making $\varphi^Q$ sufficiently large, this is equivalent to lexicographically comparing $p\hvrpSolx{\prime} < p\hvrpSol$ and only if these are equal, then comparing if $c\hvrpSolx{\prime} < c\hvrpSol$.
We note that by doing this, we allow local search to handle also infeasible solutions.
Specifically, if the initial solution to local search (e.g., obtained via a construction heuristic) satisfies the visit constraints~\ref{def:feasible:partition} and fleet availability constraints~\ref{def:feasible:fleet_size} but not necessarily the vehicle capacity constraints~\ref{def:feasible:hvrp:robust_capacity}, then the local search algorithm will first attempt to find a robust feasible neighbor solution.
Once such a neighbor solution is found, the local search does not reenter the infeasible region, thereafter admitting only those neighbor solutions that are feasible.

The above observations imply that even if we restrict our attention to the aforementioned neighborhoods, every iteration of local search involves evaluating the total penalized cost of $\mathcal{O}(n^2)$ neighbor solutions to find at least one improving neighbor solution $\hvrpSolx{\prime}$.
While the evaluation of the transportation cost $c\hvrpSolx{\prime}$ can be done in constant time (by simply updating the current value of $c\hvrpSol$), the evaluation of its robust feasibility, i.e., $p\hvrpSolx{\prime}$, is not trivial, since it amounts to the solution of an inner optimization problem. 
As such, intra-route moves are trivial to evaluate since they only alter the positions of customers within a route, and not the actual set of customers itself, so that $p\hvrpSolx{\prime} = p\hvrpSol$ in such cases.
On the other hand, inter-route moves, which account for the majority of postulated moves, require us to recalculate $p\hvrpSolx{\prime}$.
Fortunately, whenever the uncertainty set $\mathcal{Q}$ is one of the five classes of sets described in Section~\ref{sec:wc_evaluation:uncertainty_sets}, we can employ the data structures described in Proposition~\ref{prop:cpu_and_memory_requirements} to efficiently update the worst-case load of the affected routes (often in constant or sublinear time), and hence efficiently compute $p\hvrpSolx{\prime}$.
Specifically, since each of the aforementioned inter-route moves can be broken down into elementary additions and removals of customers, we can incrementally update the current $p(R, k)$ value of an affected route $R$ to obtain the updated $p(R^\prime, k^\prime)$ value, using the result stated in Proposition~\ref{prop:cpu_and_memory_requirements}.
Therefore, our results also generalize to neighborhoods with more complex structure (e.g., cyclical exchanges, ejection chains etc.).

In the context of the HVRP, efficient evaluation of solutions allows us to further generalize the aforementioned neighborhoods to modify also the fleet composition vector $\bm{\kappa}$ in addition to the routes $\mathbf{R}$.
Specifically, whenever the fleet size is unlimited, every inter-route move also modifies the vehicle type $\kappa_{h}$ of each affected route $R_{h}$. The goal is to minimize the worst-case vehicle capacity violation, i.e., $\kappa_h = \arg\min_{k \in K} p(R_h, k)$; if robust feasibility $(p(R_h, k) = 0)$ is possible using at least one vehicle type $k \in K$, then the goal is to minimize the transportation cost, i.e., $\kappa_h = \arg\min_{k \in K} \left\{c(R_h, k) : p(R_h, k) = 0\right\}$.
In other words, the inter-route moves modify the vehicle type $\kappa_h$ to the one that can feasibly perform route $R_h$ under all realizations of the uncertainty at minimum cost.
On the other hand, whenever the fleet size is limited, every inter-route move only exchanges the vehicle types of the affected routes (if doing so can lower $\bar{c}\hvrpSol$).
This ensures that the fleet availability condition~\ref{def:feasible:fleet_size} is never violated.

The above modifications apply to all neighborhoods.
In addition to these, we can also generalize the definition of specific neighborhoods.
For example, prior to exploring the relocate neighborhood, we can add an empty route to the current solution.
The vehicle type of this empty route is one with the largest capacity for which it is possible to do so without violating condition~\ref{def:feasible:fleet_size}.
This allows the relocation of a customer to its own route, thus further expanding the search space.

Finally, we note that it is also possible to generalize the total penalized cost function $\bar{c}$ for specific HVRP variants (see Table~\ref{table:summary_of_HVRP_variants}).
For example, to ensure that site dependencies are always respected in the SDVRP, we can generalize $\bar{c}$ to also include a penalty term for site violations:
\begin{equation*} 
\bar{c}^S\hvrpSol = \bar{c}\hvrpSol + \varphi^S s\hvrpSol = \bar{c}\hvrpSol + \varphi^S \sum_{h = 1}^H \max \left\{0, \sum_{i \in R_h} \mathbb{I}[\kappa_h \notin K_i] \right\},
\end{equation*}
where $\varphi^S$ is a large penalty coefficient. We note that the performance of the local search algorithm does not depend critically on the chosen values of the penalty coefficients $\varphi^Q$ and $\varphi^S$ as long as they are sufficiently large when compared to the transportation costs.

\section{Robust Integer Programming Model and Branch-and-Cut}\label{sec:milp_and_bnc}
The metaheuristic approaches described in the previous section determine high-quality robust feasible solutions, in general.
To precisely quantify the quality of these solutions however, we need a lower bound on the optimal objective value of the robust HVRP.
To that end, Section~\ref{sec:milp_and_bnc:ip_formulation} describes an integer programming (IP) formulation whose optimal solution coincides with that of the robust HVRP, while Section~\ref{sec:milp_and_bnc:bnc_algorithm} describes a branch-and-cut algorithm for its solution.

\subsection{Integer Programming Model}\label{sec:milp_and_bnc:ip_formulation}
Our model is similar to the classical \emph{vehicle-flow formulation} originally introduced in~\cite{Laporte1986:2vf} for the CVRP.
The model uses binary variables $y_{ik}$ to record if customer $i \in V_C$ is visited by a vehicle of type $k \in K$ and variables $x_{ijk}$ to record if the edge $(i, j) \in E$ is traversed by a vehicle of type $k \in K$.
To facilitate the description of the formulation, we define $V_k \coloneqq \left\{i \in V_C : \max_{q \in \mathcal{Q}} q_i \leq Q_k \right\}$ to be the subset of those customers that can be visited by a vehicle of type $k \in K$ under any customer demand realization $q \in \mathcal{Q}$.
For a given $S \subseteq V_k$, we also define $\delta_k(S)$ to be the subset of all edges in $E$ with one end point in $S$ and the other in $V_k \setminus S$.
The complete formulation is as follows.
\begingroup
\allowdisplaybreaks
\small
\begin{subequations}\label{eq:formulation}
\begin{alignat}{3}
&\displaystyle\mathop{\text{minimize}}_{x,y} &\;\;& \displaystyle\sum_{k \in K} \sum_{i \in V_C : (0, i) \in E} (f_k/2) x_{0ik} + \sum_{k \in K}\sum_{(i, j) \in E} c_{ijk} x_{ijk}  \label{eq:formulation:obj}\\
&\displaystyle\text{subject to} && \displaystyle y_{ik} \in \{0, 1\} &&\quad\forall i \in V_C,\; \forall k \in K, \label{eq:formulation:binary_y}\\
&&& \displaystyle x_{ijk} \in \{0, 1\} &&\quad\forall (i, j) \in E \cap (V_C \times V_C),\; \forall k \in K, \label{eq:formulation:binary_x_Vc}\\
&&& \displaystyle x_{0ik} \in \{0, 1, 2\} &&\quad\forall i \in V_C : (0, i) \in E,\; \forall k \in K, \label{eq:formulation:binary_x_0i}\\
&&& \displaystyle \sum_{k \in K} y_{ik} = \sum_{k \in K: i \in V_k} y_{ik} = 1 &&\quad\forall i \in V_C, \label{eq:formulation:exactly_one_vehicle_type}\\
&&& \displaystyle \sum_{j \in V : (i, j) \in E} x_{ijk} = 2y_{ik} &&\quad\forall i \in V_C,\; \forall k \in K, \label{eq:formulation:degrees}\\
&&& \displaystyle \sum_{i \in V_C : (0, i) \in E} x_{0ik} \leq 2m_k &&\quad \forall k \in K, \label{eq:formulation:fleet_size}\\
&&& \displaystyle \sum_{(i, j) \in \delta_k(S)} x_{ijk} + 2 \sum_{i \in S} (1 - y_{ik}) \geq 2\left\lceil{\frac{1}{Q_k} \max_{q \in \mathcal{Q}} \sum_{i \in S} q_i}\right\rceil &&\quad \forall S \subseteq V_k, \; \forall k \in K. \label{eq:formulation:rci}
\end{alignat}
\end{subequations}
\endgroup
The objective function~\eqref{eq:formulation:obj} minimizes the sum of the fixed costs and transportation costs; since the fixed costs are accounted on both the first and last edges of a route (i.e., both edges adjacent to the depot), the total must be divided by two to obtain the true fixed cost.
Constraints~\eqref{eq:formulation:binary_y}--\eqref{eq:formulation:binary_x_0i} enforce integrality restrictions.
Constraints~\eqref{eq:formulation:exactly_one_vehicle_type} stipulate that each customer must be visited by exactly one vehicle type; moreover, it also requires that this vehicle type be such that it can feasibly visit this customer under all possible demand realizations.
Constraints~\eqref{eq:formulation:degrees} enforce that if a customer is visited by vehicle type $k \in K$, then there are exactly two edges adjacent to it that are traversed by a vehicle of that type; moreover, if it is not visited by a vehicle of type $k \in K$, then all edge variables adjacent to it are set to zero.
Constraints~\eqref{eq:formulation:fleet_size} ensure that no more than $m_k$ vehicles of type $k \in K$ are used.
Constraints~\eqref{eq:formulation:rci} restrict \emph{subtours} (i.e., ensure that no vehicles of type $k \in K$ perform routes that are disconnected from the depot) and also enforce that the worst-case load of routes of type $k \in K$ are less than the capacity $Q_k$.
We refer to these inequalities as \emph{robust heterogeneous rounded capacity inequalities} since they generalize the classical \emph{rounded capacity inequalities} (RCI) for the deterministic CVRP~\cite{Lysgaard2004} (i.e., whenever $\mathcal{Q}$ and $K$ are both singletons).
Finally, we note that in the case of the SDVRP (see Table~\ref{table:summary_of_HVRP_variants}), the following additional constraint can be added to formulation~\eqref{eq:formulation} to ensure that all site dependencies are respected:
\begin{equation}\label{eq:formulation:sdvrp}\tag{\ref*{eq:formulation}i}
y_{ik} = 0 \quad \forall k \notin K_i,\; \forall i \in V_C.
\end{equation}
Proposition~\ref{prop:formulation_correctness} establishes the correctness of formulation~\eqref{eq:formulation}.
\begin{prop}\label{prop:formulation_correctness}
The feasible solutions of formulation~\eqref{eq:formulation} are in one-to-one correspondence with robust feasible solutions of the HVRP.
\end{prop}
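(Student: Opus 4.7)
The plan is to establish the correspondence by exhibiting explicit maps in each direction and verifying that feasibility is preserved.

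First I would tackle the easier forward direction. Given a robust feasible HVRP solution $\hvrpSol$, I define $y_{ik} = 1$ if and only if customer $i$ is visited on some route $R_h$ with $\kappa_h = k$, and $x_{ijk}$ as the number of times edge $(i,j)$ is traversed by vehicles of type $k$ (which is $2$ on a depot-incident edge only when a route of type $k$ serves a single customer, and $1$ in all other traversed cases). Then \eqref{eq:formulation:binary_y}--\eqref{eq:formulation:binary_x_0i} follow from \ref{def:feasible:partition}; \eqref{eq:formulation:exactly_one_vehicle_type} follows from \ref{def:feasible:partition} together with the capacity feasibility (which ensures the serving vehicle has $\max_q q_i \leq Q_k$, placing $i$ in $V_k$); \eqref{eq:formulation:degrees} is an immediate consequence of each customer having exactly two adjacent edges on its route; \eqref{eq:formulation:fleet_size} follows from \ref{def:feasible:fleet_size}; and \eqref{eq:formulation:rci} follows from \ref{def:feasible:hvrp:robust_capacity} by a standard argument, since for any $S \subseteq V_k$ the left-hand side counts twice the number of routes of type $k$ serving at least one customer in $S$, and any such collection of routes must collectively cover the worst-case load of customers in $S \cap \{i : y_{ik}=1\}$, forcing at least $\lceil Q_k^{-1} \max_q \sum_{i \in S} q_i \rceil$ routes when $S \subseteq \{i : y_{ik}=1\}$.

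For the reverse direction, given feasible $(x,y)$, I would fix a vehicle type $k$ and examine the multigraph $G_k$ on $\{0\} \cup \{i \in V_C : y_{ik} = 1\}$ whose edges are the support of $x_{\cdot\cdot k}$ with multiplicities from \eqref{eq:formulation:binary_x_Vc}--\eqref{eq:formulation:binary_x_0i}. By \eqref{eq:formulation:degrees}, every customer node in $G_k$ has degree exactly two, so the connected components containing customers partition into either (i) simple cycles through the depot or (ii) subtours disjoint from the depot. I then rule out case (ii) using the RCI \eqref{eq:formulation:rci}: applying it with $S$ equal to the customer set of a hypothetical subtour (which lies in $V_k$ by \eqref{eq:formulation:exactly_one_vehicle_type}) gives $\delta_k(S)$-edges equal to zero and $y_{ik} = 1$ throughout, so the inequality forces $\max_q \sum_{i \in S} q_i \leq 0$, contradicting that $S$ is non-empty with positive demand. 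Each component is therefore a valid route $R_h$ through the depot; setting $\kappa_h = k$ and assembling routes across all $k$ yields $\hvrpSol$. Constraint \ref{def:feasible:partition} follows from \eqref{eq:formulation:exactly_one_vehicle_type}, \ref{def:feasible:fleet_size} from \eqref{eq:formulation:fleet_size} after noting that \eqref{eq:formulation:degrees} makes $\sum_i x_{0ik}$ equal to twice the number of routes of type $k$, and \ref{def:feasible:hvrp:robust_capacity} from applying \eqref{eq:formulation:rci} with $S$ equal to the customer set of each individual route, which gives $\lceil Q_k^{-1} \max_q \sum_{i \in S} q_i \rceil \leq 1$.

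Finally, I would verify that the two maps are mutually inverse, which is essentially immediate from their definitions, and check that the objective \eqref{eq:formulation:obj} equals $c\hvrpSol$ (the fixed-cost halving reconciles the double counting from $x_{0ik} \in \{0,1,2\}$). The main obstacle is the subtour-elimination step: I need to argue cleanly that the RCI is strong enough to exclude depot-disjoint cycles, which relies on the implicit assumption that $\max_q \sum_{i \in S} q_i > 0$ for every non-empty $S$ of customers with positive worst-case demand. This assumption is standard in the VRP literature and can be made explicit if needed by discarding any customer whose worst-case demand is zero in a preprocessing step.
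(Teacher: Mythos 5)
Your overall strategy---explicit maps in both directions, constraint-by-constraint verification, and exclusion of depot-disjoint cycles via the capacity inequalities---is the same as the paper's. In the reverse direction you are in fact more explicit than the paper: you correctly identify that the degree constraints alone do not prevent subtours and that inequalities~\eqref{eq:formulation:rci} must be invoked with $S$ equal to the customer set of a hypothetical subtour (the paper attributes condition~\ref{def:feasible:partition} to constraints~\eqref{eq:formulation:binary_y}--\eqref{eq:formulation:degrees} alone), and your caveat about customers with zero worst-case demand is a fair one.

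There is, however, a genuine gap in your forward direction: you only verify the robust rounded capacity inequality~\eqref{eq:formulation:rci} for sets $S$ contained in $\{i \in V_k : y_{ik} = 1\}$, i.e., sets all of whose customers are served by vehicle type $k$. The constraint is imposed for \emph{every} $S \subseteq V_k$, and when $S$ contains customers assigned to other vehicle types, the right-hand side still involves $\max_{q \in \mathcal{Q}} \sum_{i \in S} q_i$ over all of $S$, while the routes of type $k$ crossing $S$ only account for the worst-case load of the customers of $S$ that they actually visit. Your sketch never uses the term $2\sum_{i \in S}(1 - y_{ik})$, which is precisely what covers the remaining customers, and covering them is not automatic. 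The paper closes this with two ingredients: first, every $i \in S \subseteq V_k$ satisfies $\max_{q \in \mathcal{Q}} q_i \leq Q_k$ by the definition of $V_k$, so that $2\sum_{i \in S}(1 - y_{ik})$, which equals twice the number of customers of $S$ not served by type $k$, is at least $2\bigl\lceil \tfrac{1}{Q_k} \max_{q \in \mathcal{Q}} \sum_{i \in S : y_{ik} = 0} q_i \bigr\rceil$; second, this bound and the crossing-edge bound $2\bigl\lceil \tfrac{1}{Q_k} \max_{q \in \mathcal{Q}} \sum_{i \in S : y_{ik} = 1} q_i \bigr\rceil$ must be combined using the subadditivity of the maximum and ceiling operators to recover the ceiling of the worst-case load of all of $S$. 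Without this step the forward map is not shown to land in the feasible region of~\eqref{eq:formulation}, since the inequality remains unverified for exactly those sets $S$ that mix vehicle types.
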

\begin{proof}
Suppose $\hvrpSol$ is a robust feasible solution of the HVRP, i.e., it satisfies conditions~\ref{def:feasible:partition}, \ref{def:feasible:fleet_size} and~\ref{def:feasible:hvrp:robust_capacity}.
Construct the solution $(x, y)$ as follows: $y_{ik} = \sum_{h = 1}^H \mathbb{I}[i \in R_h] \mathbb{I}[k = \kappa_h]$ and $x_{ijk} = \sum_{h = 1}^H \sum_{l = 0}^{\abs{R_h}} \mathbb{I}[(i, j) = (r_{hl}, r_{hl + 1})] \mathbb{I}[k = \kappa_h]$.
We claim that $(x, y)$ is a feasible solution of formulation~\eqref{eq:formulation}.
To see this, first observe that satisfaction of constraints~\eqref{eq:formulation:binary_y}--\eqref{eq:formulation:degrees} follows from the definition of a route (see Section~\ref{sec:problem_definition}) and from the fact that $\hvrpSol$ satisfies condition~\ref{def:feasible:partition}.
Similarly, constraint~\eqref{eq:formulation:fleet_size} is satisfied because $\hvrpSol$ satisfies condition~\ref{def:feasible:fleet_size}.
Constraints~\eqref{eq:formulation:rci} are satisfied because of the following reason.
First, observe that we have:
\begingroup
\allowdisplaybreaks
\begin{align*}
\displaystyle \sum_{(i, j) \in \delta_k(S)} x_{ijk} &\geq 2 \Big\lvert\underbrace{h \in \{1, \ldots, H\} : \kappa_h = k \text{ and } S \cap R_h \neq \emptyset}_{\coloneqq H_k(S) = \text{index set of routes of type $k$ `crossing' $S$}}\Big\rvert \\
&= 2\left\lceil\frac{1}{Q_k} \sum_{h \in H_k(S)} Q_k \right\rceil \\
&\geq 2 \left\lceil \frac{1}{Q_k} \max_{q \in \mathcal{Q}} \sum_{h \in H_k(S)} \sum_{i \in S \cap R_h} q_i \right\rceil
= 
2 \left\lceil \frac{1}{Q_k} \max_{q \in \mathcal{Q}} \sum_{i \in S \cap \left(\cup_{h \in H_k(S)} R_h\right) } q_i \right\rceil,
\end{align*}
\endgroup
where the first inequality follows by construction of $x$ while the second inequality follows because \textit{(i)} $\hvrpSol$ satisfies condition~\ref{def:feasible:hvrp:robust_capacity} and \textit{(ii)} the maximum operator is subadditive.
Second, we have:
\begin{equation*}
2 \sum_{i \in S} (1 - y_{ik}) = 2 \abs{i \in S \setminus \left(\cup_{h \in H_k(S)} R_h\right)} \geq 2 \left\lceil \frac{1}{Q_k} \max_{q \in \mathcal{Q}} \sum_{i \in S \setminus \left(\cup_{h \in H_k(S)} R_h\right) } q_i \right\rceil,
\end{equation*}
where the equality follows by construction of $y$ while the inequality follows because \textit{(i)} each $i \in S \subseteq V_k$ satisfies $\max_{q \in \mathcal{Q}} q_i \leq Q_k$ and, \textit{(ii)} the maximum and ceiling operators are subadditive.
Finally, combining the above two expressions and using again the subadditivity of the maximum and ceiling operators shows that inequalities~\eqref{eq:formulation:rci} are satisfied.

Now, suppose that $(x, y)$ is a feasible solution of formulation~\eqref{eq:formulation}.
Construct $\hvrpSol$ as follows: \textit{(i)} $H \gets 0$; \textit{(ii)} for every $i \in V_C$, if $\sum_{k \in K} x_{0ik} = 1$ and $i \notin R_1, \ldots, R_H$, then set $H \gets H + 1$ and define $\kappa_{H} = \sum_{k \in K} k\mathbb{I}[y_{ik} = 1]$ and  $R_{H}$ to be the cycle that passes through customer $i$ in the subgraph of $G$ induced by $\left\{(i', j') \in E : x_{i'j'\kappa_{H}} = 1 \right\}$.
We claim that $\hvrpSol$ is a robust feasible solution of the HVRP.
To see this, first observe that $\hvrpSol$ satisfies condition~\ref{def:feasible:partition} because of inequalities~\eqref{eq:formulation:binary_y}--\eqref{eq:formulation:degrees}, and condition~\ref{def:feasible:fleet_size} because of inequality~\eqref{eq:formulation:fleet_size}.
To see that condition~\ref{def:feasible:hvrp:robust_capacity} is also satisfied for each route $R_h$, $h \in \{1, \ldots, H\}$: set $S = R_h$ and $k = \kappa_h$ in inequalities~\eqref{eq:formulation:rci}.
The left-hand side simplifies to $2$ while the right-hand side simplifies to $2\ceil{(1 / Q_{\kappa_h}) \max_{q \in \mathcal{Q}} \sum_{i \in S} q_i}$.
Hence, we have $1 \geq \ceil{(1 / Q_{\kappa_h}) \max_{q \in \mathcal{Q}} \sum_{i \in S} q_i}$, which implies that condition~\ref{def:feasible:hvrp:robust_capacity} is satisfied.
\end{proof}

\subsection{Branch-and-Cut Algorithm}\label{sec:milp_and_bnc:bnc_algorithm}
The number of variables in formulation~\eqref{eq:formulation} is $\mathcal{O}(mn^2)$ but the number of constraints is $\mathcal{O}(m2^n)$.
However, if we leave out the RCI constraints~\eqref{eq:formulation:rci}, then the number of remaining constraints is $\mathcal{O}(mn)$.
Therefore, we can solve the formulation in a \emph{cutting plane} fashion by removing constraints~\eqref{eq:formulation:rci} and dynamically re-introducing them if and when they are found to be violated by the solution of the current linear programming relaxation.
In fact, we can embed the cutting plane generation in each node of a branch-and-bound tree to obtain a \emph{branch-and-cut} algorithm.
We refer to~\cite{Lysgaard2004} for a general reference on branch-and-cut in the context of vehicle routing.
The performance of the branch-and-cut algorithm can be improved by adding in each tree node, inequalities that are valid but not necessary for the correctness of formulation~\eqref{eq:formulation}.
In the following, we describe several such valid inequalities as well as the associated \emph{separation algorithms}.
We also describe an effective preprocessing step that can reduce the number of variables in formulation~\eqref{eq:formulation}.

\paragraph{Valid inequalities.}
Several valid inequalities have been proposed for flow-based formulations of the deterministic HVRP in~\cite{Baldacci2009:FSMF,Yaman2006}.
Among these, the \emph{cover inequalities} and \emph{fleet-dependent capacity inequalities} are particularly effective in obtaining strong lower bounds.
The validity of the inequalities for the robust HVRP formulation~\eqref{eq:formulation} can be established by defining them for every possible customer demand realization $q \in \mathcal{Q}$.
To describe these inequalities, we assume, without loss of generality, that the vehicle types are sorted in increasing order with respect to their capacities: $Q_1 \leq \ldots \leq Q_m$.
We also define $s_k(q)$ to be total demand of the customers for which a vehicle of type $k$ is the smallest one that can visit them, under a particular customer demand realization $q \in \mathcal{Q}$.
Then, the following \emph{robust cover inequalities} are valid for formulation~\eqref{eq:formulation}.
\begin{equation}\label{eq:hvrp_cover}\tag{\ref*{eq:formulation}j}
\begin{array}{c}
\displaystyle\sum_{h = k}^m \left(\floor{\alpha Q_h} + \min\left\{1, \dfrac{\alpha Q_h -\left\lfloor{\alpha Q_h} \right\rfloor}{\alpha \sum\limits_{h = k}^m s_h(q) - \left\lfloor{\alpha \sum\limits_{h = k}^m s_h(q)} \right\rfloor } \right\} \right) \sum_{i \in V_C: (0, i) \in E}\hspace{-1em} x_{0ih} \geq 2 \left\lceil \alpha \sum_{h = k}^m s_h(q) \right\rceil \\
\displaystyle \hfill \forall \alpha \in \mathbb{R}_{+}, \; \forall k \in K, \; \forall q \in \mathcal{Q}.
\end{array}
\end{equation}
Let us define $\delta(S)$ to be the set of edges in $E$ with exactly one end point in $S$ and one end point in $V_C\setminus S$. Then, the following \emph{robust fleet-dependent capacity inequalities} are valid for formulation~\eqref{eq:formulation}.
\begin{equation}\label{eq:fleet_dependent}\tag{\ref*{eq:formulation}k}
\begin{array}{c}
\displaystyle \sum_{k \in K} \sum_{(i, j) \in \delta(S)} x_{ijk} + \sum_{h = k}^m \left\lceil 2 \left(\frac{Q_h - Q_{k - 1}}{Q_{k - 1}}\right) \right\rceil \sum_{i \in V_C: (0, i) \in E}\hspace{-1em} x_{0ih} \geq \left \lceil \frac{2}{Q_{k-1}} \max_{q \in \mathcal{Q}} \sum_{i \in S} q_i \right\rceil \\
\displaystyle \hfill \forall S \subseteq V_C, \; \forall k \in K \setminus \{1\}.
\end{array}
\end{equation}
The validity of the following \emph{generalized subtour elimination constraints} and \emph{generalized fractional capacity inequalities} can also be easily verified.
The term \textit{generalized} refers to the fact that these inequalities reduce to the classical subtour elimination constraints and fractional capacity inequalities, respectively, in the case of the deterministic CVRP (i.e., when both $\mathcal{Q}$ and $K$ are singletons).
However, unlike the latter, these inequalities do not dominate and are not dominated by the robust heterogeneous RCI constraints~\eqref{eq:formulation:rci}.
\begingroup
\allowdisplaybreaks
\begin{subequations}
\begin{gather}
\sum_{(i, j) \in \delta_k(S)} x_{ijk} \geq 2\max_{v \in S} y_{vk} \quad \forall S \subseteq V_k, \; \forall k \in K. \label{eq:hvrp_gsec}\tag{\ref*{eq:formulation}l} \\
\sum_{(i, j) \in \delta_k(S)} x_{ijk} \geq \frac{2}{Q_k} \sum_{i \in S} q_iy_{ik} \quad \forall S \subseteq V_k, \; \forall k \in K, \; \forall q \in \mathcal{Q}. \label{eq:hvrp_gfci}\tag{\ref*{eq:formulation}m}
\end{gather}
\end{subequations}
\endgroup
In addition to the above, any valid inequality for the \emph{two-index vehicle flow formulation} of the deterministic CVRP, defined over graph $G = (V, E)$ with vehicle capacity $Q = \max_{k \in K} Q_k$, can also be made valid for formulation~\eqref{eq:formulation} by defining it for all $q \in \mathcal{Q}$ and by replacing the two-index variable $x_{ij}$ with $\sum_{k \in K} x_{ijk}$ for all $(i, j) \in E$.
In our implementation, we used the comb, framed capacity and multistar inequalities in this manner~\cite{Lysgaard2004}.

\paragraph{Separation algorithms.}
Let $(\bar{x}, \bar{y})$ be a fractional solution encountered in some node of the search tree.
The goal of a separation algorithm is to identify if a particular member of a family of inequalities is violated by the current solution $(\bar{x}, \bar{y})$.
Consider the robust heterogeneous RCI constraints~\eqref{eq:formulation:rci}.
For a particular vehicle type $k \in K$, the identification of a customer set $S \subseteq V_k$ for which the corresponding inequality is violated by $(\bar{x}, \bar{y})$ is nontrivial.
In the deterministic CVRP, this is typically achieved by a local search algorithm. 
For example, a greedy search algorithm is presented in~\cite{Lysgaard2004}; this algorithm iteratively expands a (randomly initialized) set $S = \{s\}$ with a customer $j$ for which the corresponding slack of the RCI constraint (i.e., difference between the right-hand side and left-hand side) is maximized.
In our implementation, we extend this idea by using a tabu search procedure very similar to the one presented in Section~\ref{sec:local_search}.
The key difference is that a ``solution'' in the context of this local search algorithm is simply a customer set $S \subseteq V_k$ as opposed to an entire set of routes.
Specifically, the algorithm starts with a randomly selected customer set $S \subseteq V_k$ and then iteratively perturbs this set through a sequence of operations in which individual customers are added
or removed.
In each iteration, the algorithm greedily chooses a customer whose inclusion or removal maximizes the slack of the corresponding robust heterogeneous RCI constraint~\eqref{eq:formulation:rci}.
Similar to the argument in Section~\ref{sec:local_search}, computing this slack requires the computation of the right-hand side which, in turn, requires the efficient computation of the worst-case load over the current candidate set of customers $S$.
This is achieved by using the data structures described in Proposition~\ref{prop:cpu_and_memory_requirements}.
Finally, similar to Section~\ref{sec:local_search}, the algorithm also maintains tabu lists  of customers that have recently been added or removed to avoid cycling and to escape local optima. The algorithm terminates if we cannot maximize the slack of constraint~\eqref{eq:formulation:rci} for more than a pre-defined number of consecutive iterations.

The separation algorithm for the robust fleet-dependent capacity inequalities~\eqref{eq:fleet_dependent} is exactly the same as above.
The separation problem for the generalized subtour elimination constraints~\eqref{eq:hvrp_gsec} is solved using the polynomial-time algorithm described in~\cite{Fischetti1998}.
Similarly, the separation problem for the generalized fractional capacity inequalities~\eqref{eq:hvrp_gfci}, under a particular demand realization $q^\star \in \mathcal{Q}$, reduces to the separation problem of the fractional capacity inequalities for the deterministic CVRP if we define the customer demands to be $q^\star_i \bar{y}_{ik}$, which is known to be polynomial-time solvable~\cite{McCormick2003}.
In our implementation, we restrict the separation to a particular realization defined by $q^\star \in \arg\max_{q \in \mathcal{Q}} \sum_{i \in V_C} q^\star_i$.
Similarly, we separate the CVRP-based comb, framed capacity and multistar inequalities using the CVRPSEP package~\cite{Lysgaard2004} by only considering $q^\star \in \mathcal{Q}$.
Finally, the robust cover inequalities~\eqref{eq:hvrp_cover} are separated by enumerating $\alpha \in \{Q_1, \ldots, Q_m, \text{gcd}(Q_1, \ldots, Q_m)\}$, where $\text{gcd}$ denotes the greatest common divisor, and by considering only $q^\star \in \mathcal{Q}$.

\paragraph{Preprocessing.}
A simple method to reduce the number of vehicle types was presented in~\cite{Choi2007} for the deterministic HVRP. 
Suppose $UB$ is a known upper bound on the optimal objective value of formulation~\eqref{eq:formulation}.
For example, $UB$ may obtained using the metaheuristics described in Section~\ref{sec:local_search}.
Suppose we enforce now that at least one vehicle of type $k \in K$ must be used, by adding the constraint $\sum_{i \in V_C} y_{ik} \geq 1$ to formulation~\eqref{eq:formulation}.
If $LB'_k$ denotes a lower bound on the optimal value of this augmented problem and if $LB'_k > UB$, then we can delete vehicle type $k \in K$ and all of its associated variables from formulation~\eqref{eq:formulation}, since the corresponding solution would be suboptimal.
In our implementation, we estimate $LB'_k$ by solving the augmented formulation using a branch-and-cut algorithm and recording the global lower bound of the branch-and-bound tree after 1~minute.

\section{Computational Results}\label{sec:results}
This section presents computational results obtained using the metaheuristic algorithms described in Section~\ref{sec:local_search} as well as the exact algorithm described in Section~\ref{sec:milp_and_bnc}.
Specifically, Section~\ref{sec:results:instances} provides an overview of the benchmark instances used; Section~\ref{sec:results:heuristics} presents a detailed computational study using the ILS and AMP algorithms; Section~\ref{sec:results:lower_bounding} illustrates the quality of the lower bounds obtained using the branch-and-cut algorithm; and finally, in Section~\ref{sec:results:price_of_robustness}, we analyze the robust HVRP solutions in terms of their robustness and objective value.

All algorithms were coded in C++ and compiled using the GCC~7.3.0 compiler. Each run was conducted on a single thread of an Intel Xeon 3.1~GHz processor.
In our implementation of the ILS and AMP algorithms, the following parameter values were used: $\varphi^Q = 1000 \ubar{c} Q_\text{max}^{-1}$, $\varphi^S = 100 \ubar{c}$, $\chi = 10$, $\eta = 3$, $\nu = 30$, $\eta = 500$, $\delta = 0.5\ubar{c}$, $\theta = 0.7$ and $\mu = 16$, where we have defined $Q_\text{max} = \max_{k \in K} Q_k$ and $\ubar{c}_{ij} = \max_{(i, j) \in V_C \times V_C} \min_{k \in K} c_{ijk}$. An overall time limit of 1,000 seconds ($t_\text{lim} = 1000$) was used. In the implementation of the branch-and-cut algorithm, we used CPLEX~12.7 as the IP solver; all solver options were at their default values with three exceptions: \textit{(i)} general-purpose cutting planes and upper bounding heuristics were disabled, \textit{(ii)} strong branching was enabled, and \textit{(iii)} all valid inequalities described in Section~\ref{sec:milp_and_bnc:bnc_algorithm} were added using user-defined callback functions.
An overall time limit of 10,000~seconds was used in this case.

\subsection{Test Instances}\label{sec:results:instances}
All our instances are based on the following three benchmark datasets corresponding to different variants of the deterministic HVRP.
\begin{enumerate}[label={(\alph*)}]
\item HVRP instances: We consider the twelve instances involving up to 100 customers proposed in~\cite{Golden1984:fsm} and adapted by~\cite{Choi2007,Taillard1999:hvrp}.
The data of these instances can be found at \url{http://mistic.heig-vd.ch/taillard/problemes.dir/vrp.dir/vrp}.
The instances for the different HVRP variants are obtained by changing the data of the HVRPFD instances as follows, resulting in a total of 52 instances.
\begin{itemize}
\item HVRPD: Set $f_k = 0$ for each $k \in K$.
\item FSMFD: Set $m_k = n$ for each $k \in K$.
\item FSMD: Set $f_k = 0$ and $m_k = n$ for each $k \in K$.
\item FSMF: Set $m_k = n$ for each $k \in K$ and $c_{ijk} = e_{ij}$, where $e_{ij}$ is the Euclidean distance between nodes $i \in V$ and $j \in V$.
\end{itemize}

\item SDVRP instances: We consider the 13 instances containing up to 108 customers that have also been considered by~\cite{Nag1988,CordeauLaporte2001:sdvrp,Chao1999:sdvrp,Baldacci2009:MP}.
The data of these instances can be found at \url{http://neumann.hec.ca/chairedistributique/data/sdvrp}.

\item MDVRP instances: We consider the 9 instances involving up to 160 customers that have also been considered by~\cite{Cordeau1997:mdvrp,Baldacci2009:MP}.
The data of these instances can be found at \url{http://neumann.hec.ca/chairedistributique/data/mdvrp}.
\end{enumerate}

For each deterministic HVRP benchmark, we construct five classes of uncertainty sets. To ensure that the constructed sets are meaningful, we partition the customer set $V_C$ into four geographic quadrants, $NE$, $NW$, $SW$, and $SE$, based on the coordinates in the benchmark instance. Moreover, the customer demands specified in the benchmark are taken to be their nominal values $q^0$.
We then construct the following uncertainty sets, each of which is parametrized by scalars $\alpha, \beta \in [0, 1]$.
\begin{enumerate}[label={(\alph*)}]
\item Budget sets (originally proposed in~\cite{Gounaris2013:OR}):
\[
\mathcal{Q}_B = \left\{
q \in [(1 - \alpha)q^0, (1 + \alpha)q^0]: \sum_{i \in \Omega} q_i \leq (1 + \alpha \beta) \sum_{i \in \Omega} q_i^0 \;\;\; \forall \, \Omega \in \{NE,NW.SW,SE\}.
\right\}.
\]
This set stipulates that each customer demand can deviate by at most $\alpha \cdot 100\%$ from its nominal value, but the cumulative demand in each quadrant may not exceed its nominal value by more than $\beta \cdot 100\%$.

\item Factor models (originally proposed in~\cite{Gounaris2013:OR}):
\[
\mathcal{Q}_F = \left\{
q \in \mathbb{R}^n : q = q^0 + \Psi \xi \text{ for some } \xi \in \Xi_F
\right\}, \text{where }
\Xi_F = \left\{\xi \in [-1, 1]^4: \big\lvert{e^\top \xi}\big\rvert \leq 4\beta \right\}.
\]
This set models the demand of customer $i$ as a convex combination of 4~factors that can be interpreted as quadrant demands with the weights reflecting the relative proximity of customer $i$ to the quadrant. Specifically, we set $\Psi_{if} = \alpha q_i^0 \psi_{if} / \sum_{f' = 1}^4 \psi_{if}'$, where $\psi_{if}$ denotes the inverse distance between customer $i$ and the centroid of quadrant $f \in \{1, 2, 3, 4\}$.

\item Ellipsoids:
\[
\mathcal{Q}_E = \left\{
q \in \mathbb{R}^n : q = q^0 + \Sigma^{1/2} \xi \text{ for some } \xi \in \Xi_E
\right\}, \text{where }
\Xi_E = \left\{\xi \in \mathbb{R}^n: \xi^\top \xi \leq 1 \right\}.
\]
We define $\Sigma = (1 - \beta) \Psi \Psi^\top + \beta \text{diag}\left(\alpha q_1^0, \ldots, \alpha q_n^0 \right)^2$, where $\Psi$ is the factor loading matrix defined above while $\text{diag}(\cdot)$ is a square diagonal matrix with $(\cdot)$ denoting the entries along its main diagonal.
When $\beta = 0$, $\mathcal{Q}_E$ is approximated as a 4-dimensional ellipsoid centered at $q^0$ and the columns of $\Psi$ represent the directions along its semi-axes.
When $\beta \in (0, 1)$, $\mathcal{Q}_E$ is a general $n$-dimensional ellipsoid centered at $q^0$.
When $\beta = 1$, $\mathcal{Q}_E$ is an axis-parallel ellipsoid centered at $q_0$ with a semi-axis length of $\alpha q_i^0$ along the $i^\text{th}$ dimension; that is, when $\beta = 1$, $\mathcal{Q}_E$ inscribes the $n$-dimensional hyper-rectangle $[(1 - \alpha) q^0, (1 + \alpha) q^0]$.

\item Cardinality-constrained sets:
\[
\mathcal{Q}_G = \left\{
q \in [q^0, (1 + \alpha)q^0] : q = q^0 + \alpha(q^0 \circ \xi) \text{ for } \xi \in \Xi_G
\right\}, \text{where }
\Xi_G = \left\{\xi \in [0, 1]^n: e^\top \xi \leq \beta n \right\}.
\]
The set stipulates that each demand can deviate from its nominal value by up to $\alpha \cdot 100\%$ but the total number of customer demands that can simultaneously deviate is at most $\ceil{\beta n}$.

\item Discrete sets:
\[
\mathcal{Q}_D = \mathop{\text{conv}}\left( \left\{ q^0 \right\} \cup \left\{
q^{(j)} : j = 1,\ldots,\text{nint}(\beta n)
\right\}
\right).
\]
Here, $\text{nint}(\beta n)$ denotes the nearest integer to $\beta n$.
The points $q^{(j)}$ are generated by uniformly sampling $\text{nint}(\beta n)$ points from the $n$-dimensional hyper-rectangle $[(1 - \alpha) q^0, (1 + \alpha) q^0]$.
Thus, the set approximates the customer demands as independent, uniform random variables.
\end{enumerate}

The deterministic HVRP benchmarks are characterized by a high vehicle utilization under the nominal demands $q^0$; that is, the unused capacity of the vehicles is small, particularly in the case of problem variants with limited fleets (see Table~\ref{table:summary_of_HVRP_variants}).
If the fleet size and vehicle capacities are unchanged, then several benchmark instances become infeasible in the presence of demand uncertainty.
To alleviate this problem and conduct a meaningful computational study, we increase the capacity of each vehicle type $Q_k$ in each benchmark by 10\% (unless explicitly stated otherwise), which suffices to guarantee robust feasibility for $\alpha \leq 0.1$.

\subsection{Performance of Robust Local Search and Metaheuristics}\label{sec:results:heuristics}
The results reported in this section are averages across 10~runs for each of the 74 test instances.
For the budget sets, factor models and general ellipsoids, we set $(\alpha, \beta) = (0.1, 0.5)$, while for the cardinality-constrained and discrete sets, we set $(\alpha, \beta) = (0.1, 0.2)$.
We note that the axis-parallel ellipsoid is obtained by setting $(\alpha, \beta) = (0.1, 1)$.

Figure~\ref{figure:time_per_iteration} shows the time per local search iteration for the different classes of uncertainty sets described in Section~\ref{sec:wc_evaluation}.
We note here that each iteration of local search involves evaluating $\mathcal{O}(n^2)$ neighbor solutions, see Section~\ref{sec:local_search}.
We make the following observations from Figure~\ref{figure:time_per_iteration}.
First, the time per iteration correlates well with the time complexities described in Table~\ref{table:cpu_and_memory_requirements}.
Indeed, each local search iteration can be performed much faster when the uncertainty set is a budget set or axis-parallel ellipsoid since the worst-case load can be updated in constant time in such cases.
In contrast, the local search iterations are relatively slower when the uncertainty set is a discrete set or general ellipsoid for which the worst-case load can only be updated in linear time (linear in $D = \beta n$ and $n$ respectively).
Second, the results are remarkably similar across the ILS and AMP algorithms.
This shows that the time per iteration is dictated by the chosen uncertainty set and not by the overarching metaheuristic algorithm.

\begin{figure}[!htb]
\centering
\caption{Time per local search iteration under different classes of uncertainty sets (normalized with respect to the deterministic problem). The top and bottom graphs show results for the Iterated Local Search and Adaptive Memory Programming algorithms respectively.}
\label{figure:number_of_ls_iterations}
\includegraphics[scale=0.6]{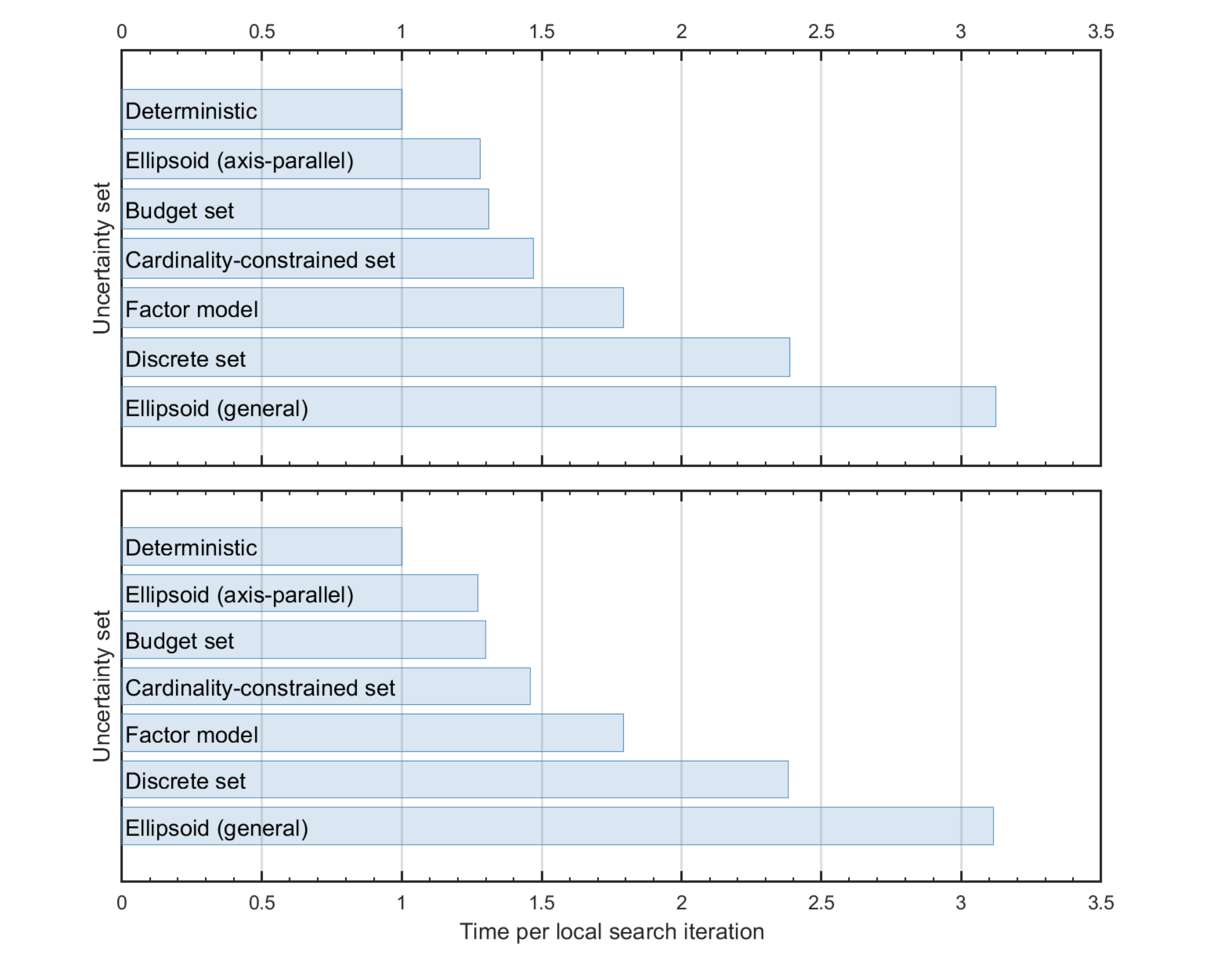}
\end{figure}

We note that the time limit of 1000 seconds is quite generous for all but the most difficult instances.
Indeed, in many cases, the metaheuristics have found good solutions in an early stage of the search process and have spent the remaining time trying to improve this solution. 
To see this, Figure~\ref{figure:progress_to_final} reports the percentage differences between the best solution $\hvrpSolx{B}$ (see Algorithms~\ref{algorithm:ILS} and~\ref{algorithm:AMP}) at various time points relative to the overall best solution (i.e., $\hvrpSolx{B}$ after 1000 seconds).
We make the following observations from Figure~\ref{figure:progress_to_final}.
First, for all uncertainty sets except the general ellipsoidal and discrete sets, the metaheuristics have found solutions that are within 1\% of the overall best after 10~seconds, and there is practically no improvement in the best solution after five minutes.
For the general ellipsoid and discrete sets, the solutions are within 2\% of the overall best after 10~seconds.
This indicates that it is probably better to restart the algorithm with a different random seed, rather than improve the solution, at this point.
Second, and similar to Figure~\ref{figure:time_per_iteration}, the performance across the various uncertainty sets correlates well with the complexities reported in Table~\ref{table:cpu_and_memory_requirements}, while the performance across the two metaheuristics is similar.

\begin{figure}[!htb]
\centering
\caption{Average progress of the metaheuristic solutions under different classes of the uncertainty sets. The top and bottom graphs show results for the Iterated Local Search and Adaptive Memory Programming algorithms respectively. The graphs report the average percentage differences relative to the overall best solution after 1000~seconds.}
\label{figure:progress_to_final}
\includegraphics[scale=0.6]{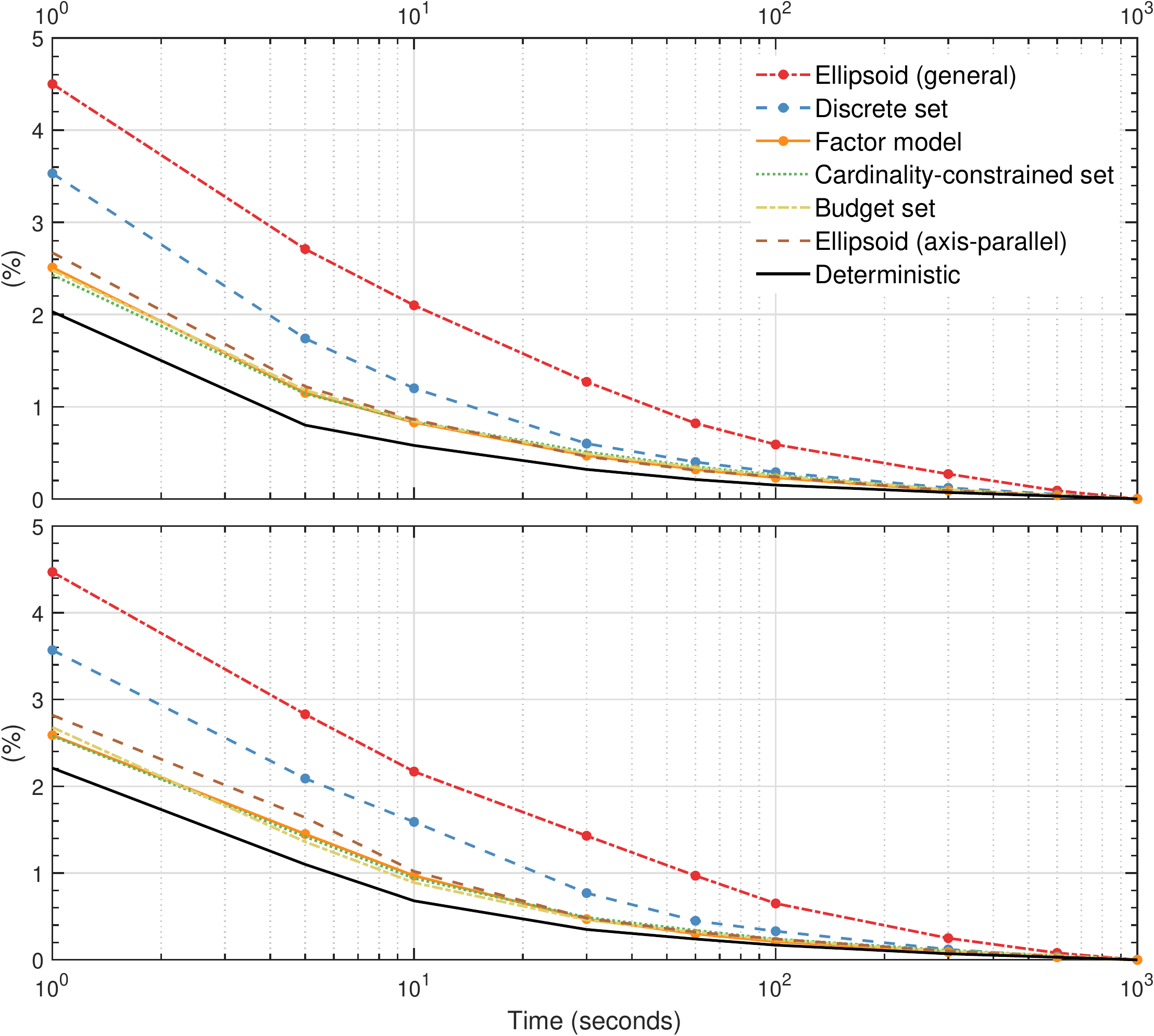}
\end{figure}

Finally, to understand the sensitivity of the proposed metaheuristic algorithms to the initial random seed, Figure~\ref{figure:random_seed_effect} plots the average percentage differences of each run relative to the overall best solution of the 10 runs, across all the 74 test instances.
The figure shows that both the ILS and AMP algorithms are fairly robust across all classes of uncertainty sets.
Indeed, the median deviation is less than 0.2\% across all uncertainty sets and in several cases, it is less than 0.1\%.

\begin{figure}[!htb]
\centering
\caption{Average deviation of the metaheuristic solutions (with respect to the best solution of 10~runs) under different classes of uncertainty sets. The top and bottom graphs show results for the Iterated Local Search and Adaptive Memory Programming algorithms respectively. The red mark corresponds to the median, the upper edge of the box to the 75$^\text{th}$ percentile, while the uppermost mark to the maximum of (74$\times$10) runs not considered outliers (which are indicated by green dots).}
\label{figure:random_seed_effect}
\includegraphics[scale=0.6]{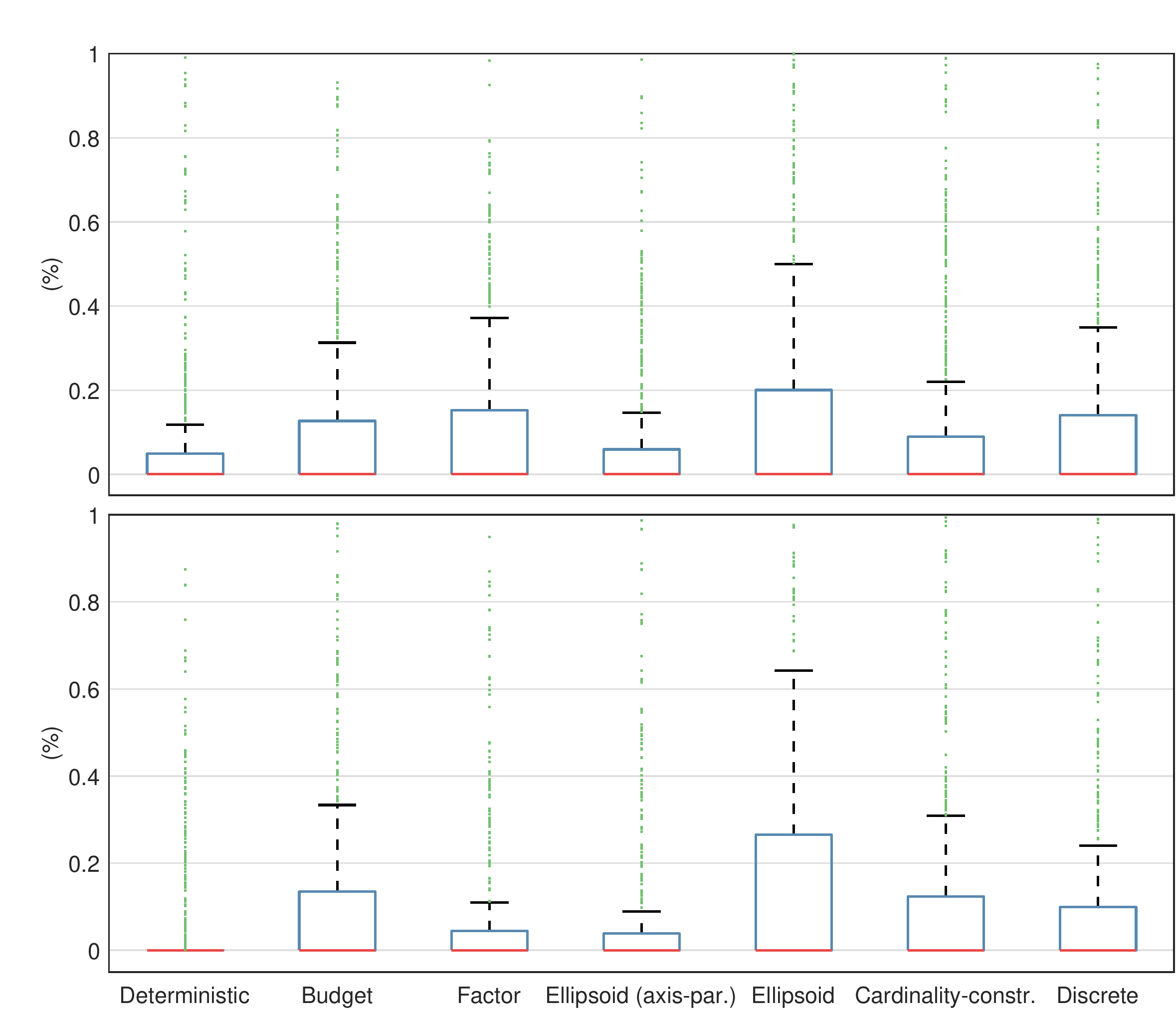}
\end{figure}

\subsection{Quality of Lower Bounds}\label{sec:results:lower_bounding}
Table~\ref{table:guaranteed_optimality_gaps} reports the quality of the lower bounds obtained using the IP formulation~\eqref{eq:formulation} described in Section~\ref{sec:milp_and_bnc}.
Specifically, the entries report the guaranteed optimality gaps, which is defined for a given instance as $(z_{ub} - z_{lb}) / z_{ub} \times 100\%$, where $z_{ub}$ is objective value of the best solution found across all 10$\times$2 runs of the ILS and AMP metaheuristics after 1000~seconds, while $z_{lb}$ is the global lower bound of the branch-and-cut algorithm after 10,000~seconds.
The table shows that the lower bounds for a given uncertainty set $\mathcal{Q}$ are very close to what one can expect for the deterministic problem (indicated by $\left\{q^0\right\}$).
Indeed, the average optimality gap for the deterministic problem is 5.9\%, while the average optimality gap for the robust problem varies between $6.7\%$ and $8.9\%$, on average.
Moreover, the lower bounding is particularly effective for the FSMFD, FSMF and MDVRP variants, i.e., whenever the fleet size is unlimited and either fixed costs or homogeneous fleets are considered.
In such cases, the average optimality gap is less than 5\% across all uncertainty sets.

\begin{table}[!htb]
  \centering
  \small
  \caption{Guaranteed optimality gaps of the metaheuristic solutions (in percent) across all benchmarks of the HVRP variants from Table~\ref{table:summary_of_HVRP_variants} and across different classes of uncertainty sets.}
    \begin{tabularx}{\textwidth}{lRRRRRRR}
    \toprule
           & $\left\{q^0\right\}$ & $\mathcal{Q}_B$ & $\mathcal{Q}_F$ & $\mathcal{Q}_E^\text{ax}$ & $\mathcal{Q}_E^\text{gen}$ & $\mathcal{Q}_G$ & $\mathcal{Q}_S$ \\
    \midrule
    HVRPFD & 7.25   & 9.91   & 9.03   & 9.03   & 8.10   & 11.89  & 9.10 \\
    HVRPD  & 11.56  & 13.42  & 13.23  & 13.12  & 12.59  & 14.73  & 13.12 \\
    FSMFD  & 3.98   & 6.43   & 5.22   & 6.38   & 4.95   & 7.49   & 6.64 \\
    FSMD   & 8.53   & 9.68   & 9.46   & 9.51   & 9.34   & 10.48  & 9.57 \\
    FSMF   & 2.98   & 5.65   & 4.45   & 4.96   & 3.72   & 6.68   & 5.36 \\
    SDVRP  & 5.30   & 7.06   & 6.74   & 5.90   & 5.84   & 7.60   & 6.19 \\
    MDVRP  & 3.58   & 5.24   & 4.86   & 4.71   & 4.61   & 5.80   & 4.85 \\ \midrule
    All    & 5.91   & 7.93   & 7.28   & 7.38   & 6.74   & 8.92   & 7.58 \\
    \bottomrule
    \end{tabularx}%
  \label{table:guaranteed_optimality_gaps}%
\end{table}%

We note that the reported entries in Table~\ref{table:guaranteed_optimality_gaps} are very conservative and they are limited by the lower bounds from the branch-and-cut algorithm rather than the upper bounds from the metaheuristics.
In fact, we believe that the metaheuristic solutions are near-optimal.
There are two reasons for this.
First, the branch-and-cut algorithm was never able to find a solution that was better than the provided metaheuristic solutions.
Second, we also used our metaheuristics to obtain solutions for all of the 74 original, deterministic benchmarks by setting the vehicle capacities $Q_k$ to their original values.
Table~\ref{table:original_deterministic_benchmarks} reports the aggregated results.
For each problem variant, the column \textbf{\#} reports the number of instances, \textbf{Gap (\%)} reports the average percentage difference between the obtained solution and the \textit{best known solution} (BKS) taken from~\cite{Pessoa2018:hvrp}, \textbf{BKS found} reports the number of instances for which the obtained solution matched the best known solution and \textbf{Time (sec)} reports the average time to find the obtained solution.
The table shows that even under the deterministic setting, both metaheuristics are very competitive when compared to existing algorithms for the HVRP (e.g., see~\cite{Penna2017}).
The AMP algorithm is superior to the ILS as it is able to match 64 out of 74 best known solutions with an average gap of 0.1\%.

\begin{table}[!htb]
  \centering
  \small
  \caption{Summary of results obtained using the metaheuristic algorithms on the 74 original benchmark instances of the deterministic HVRP.}
    \begin{tabularx}{\textwidth}{lr*{8}{R}}
    \toprule
           &        & \multicolumn{4}{c}{ILS}           & \multicolumn{4}{c}{AMP} \\
           \cmidrule(r){3-6}\cmidrule(l){7-10}
           &        & \multicolumn{2}{c}{Best run} & \multicolumn{2}{c}{Average} & \multicolumn{2}{c}{Best run} & \multicolumn{2}{c}{Average} \\
           \cmidrule(r){3-4}\cmidrule(lr){5-6}\cmidrule(lr){7-8}\cmidrule(l){9-10}
           & \#     & Gap (\%) & BKS found & Gap (\%) & Time (sec) & Gap (\%) & BKS found & Gap (\%) & Time (sec) \\
           \midrule
    HVRPFD & 8      & 0.19   & 5      & 0.31   & 325    & 0.19   & 6      & 0.30   & 283 \\
    HVRPD  & 8      & 0.21   & 6      & 0.38   & 271    & 0.00   & 8      & 0.20   & 184 \\
    FSMFD  & 12     & 0.10   & 8      & 0.18   & 244    & 0.02   & 9      & 0.12   & 192 \\
    FSMD   & 12     & 0.10   & 10     & 0.19   & 113    & 0.00   & 11     & 0.03   & 134 \\
    FSMF   & 12     & 0.09   & 9      & 0.20   & 221    & 0.02   & 10     & 0.10   & 242 \\
    SDVRP  & 13     & 0.07   & 9      & 0.22   & 274    & 0.00   & 12     & 0.17   & 189 \\
    MDVRP  & 9      & 0.10   & 4      & 0.23   & 228    & 0.01   & 8      & 0.17   & 183 \\ \midrule
    All    & 74     & 0.12   & 51     & 0.23   & 234    & 0.03   & 64     & 0.14   & 198 \\
    \bottomrule
    \end{tabularx}%
  \label{table:original_deterministic_benchmarks}%
\end{table}%

\subsection{Price of Robustness for Different Classes of Uncertainty Sets}\label{sec:results:price_of_robustness}
In this section, we quantify the average increase in total cost of the robust HVRP solution compared to its deterministic counterpart.
For each of the five classes of uncertainty sets, we fix $\alpha = 0.1$ and vary $\beta$ between $0$ and $1$.
For each case, we estimate the cost of the robust solution for each of the 74 test instances using a single run of the AMP metaheuristic under a time limit of 1000 seconds.
We then compare the total cost of the obtained solution with that of the deterministic instance (obtained by setting $(\alpha, \beta) = (0, 0)$).
Figure~\ref{figure:price_of_robustness} reports the results of this sensitivity analysis.

\begin{figure}[!htb]
\centering
\caption{Average percentage increase in transportation costs relative to the deterministic problem, over different classes of uncertainty sets with $\alpha = 0.1$. The marked square represents the cost increase when the uncertainty set is the $n$-dimensional hyperrectangle $[(1-\alpha)q^0, (1 + \alpha)q^0]$ while the marked ellipse represents the cost increase when the uncertainty set is the axis-parallel ellipsoid that inscribes this $n$-dimensional hyperrectangle.}
\label{figure:price_of_robustness}
\includegraphics[scale=0.62]{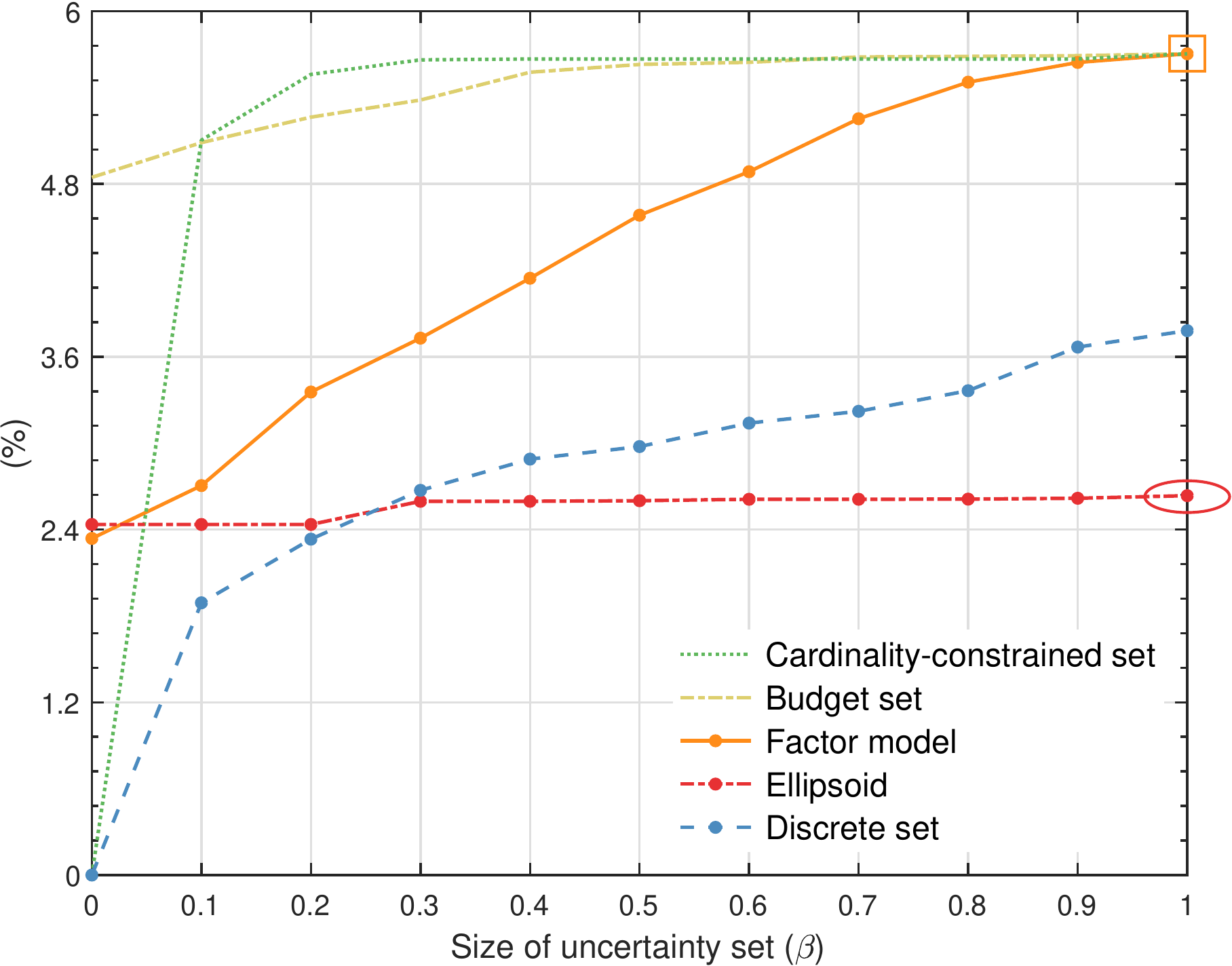}
\end{figure}

Figure~\ref{figure:price_of_robustness} offers several interesting insights.
First, the robust solutions are only slightly more expensive than their deterministic counterparts.
Even when the uncertainty set is a hyperrectangle where every customer can attain their worst realization independently, one can obtain modestly expensive solutions ($\approx6\%$) at the benefit of being immunized against a considerable random increase in customer demands (up to 10\%).
This cost increase can be reduced to $\approx2\%$ by controlling the size and shape of the uncertainty set.
Second, taking into account the results in Figure~\ref{figure:time_per_iteration}, it appears that different uncertainty sets offer different levels of trade-off between cost, robustness and tractability.
Indeed, while discrete sets offer the greatest flexibility in costs, they come at a considerable increase in computational complexity.
In contrast, the factor model also appears to offer significant flexibility but at a far less increase in complexity.
Finally, the added computational burden in modeling a non-axis-parallel ellipsoid does not pay off in terms of flexibility in transportation costs; indeed, the axis-parallel ellipsoids are only marginally more expensive at the benefit of being extremely tractable in the context of local search.

\section{Concluding Remarks}\label{sec:conclusions}
In this paper, we studied a broad class of heterogeneous fleet vehicle routing problems under the setting where the customer demands are not known precisely when the fleet composition and routes must be decided. 
We modeled the unknown customer demands as random variables that can take values in any of five broad classes of practically-relevant uncertainty sets. To hedge against this uncertainty, we aimed to determine a solution that is robust, i.e., a solution that remains feasible for all anticipated demand realizations, and we elucidated that the efficiency in computing such robust solutions (both in a heuristic and exact manner) lies in the ability to efficiently compute the worst-case loads of vehicle routes over the given uncertainty set.
With this insight, we capitalized on well-known local search algorithms for deterministic vehicle routing and augmented them with appropriate data structures to generate robust solutions. We then illustrated how to incorporate the proposed local search in two metaheuristic implementations.
Finally, we quantified the quality of the metaheuristic solutions using lower bounds obtained from an integer programming formulation.

Our study offers several practical insights.
First, robust solutions can be obtained with similar computational effort as deterministic solutions, and this is especially true in the context of metaheuristics.
In fact, since robust solutions can be obtained by augmenting local search in a modular fashion, the deployment of these advances in existing codes becomes straightforward.
Second, the trade-off between robustness and cost is highly dependent on the choice of the uncertainty set.
While some uncertainty sets might offer greater modeling flexibility or the ability to make better use of available data, they might not necessarily allow for a smooth variation in transportation costs as a function of their size. Moreover, the computational tractability of incorporating them in the solution algorithm (whether exact or heuristic) must also be carefully assessed before making a final choice.

\section*{Acknowledgements}
The authors gratefully acknowledge support from the U.S. National Science Foundation (award numbers CMMI-1434432 and CMMI-1434682). Panagiotis Repoussis also acknowledges support from the Athens University of Economics and Business (award numbers EP-2536-01 and EP-2855-01). Finally, Anirudh Subramanyam acknowledges support from the John and Claire Bertucci Graduate Fellowship Program.

\bibliographystyle{plain}
\bibliography{bibliography}

\begin{thebibliography}{10}

\bibitem{Aarts:1997:LSC:549160}
Emile Aarts and Jan~K. Lenstra, editors.
\newblock {\em Local Search in Combinatorial Optimization}.
\newblock John Wiley \& Sons, Inc., New York, NY, USA, 1st edition, 1997.

\bibitem{Agra2013:robust_vrptw}
Agostinho Agra, Marielle Christiansen, Rosa Figueiredo, Lars~Magnus Hvattum,
  Michael Poss, and Cristina Requejo.
\newblock The robust vehicle routing problem with time windows.
\newblock {\em Computers \& Operations Research}, 40(3):856 -- 866, 2013.

\bibitem{Baldacci2009:FSMF}
Roberto Baldacci, Maria Battarra, and Daniele Vigo.
\newblock Valid inequalities for the fleet size and mix vehicle routing problem
  with fixed costs.
\newblock {\em Networks}, 54(4):178--189, 2009.

\bibitem{Baldacci2009:MP}
Roberto Baldacci and Aristide Mingozzi.
\newblock A unified exact method for solving different classes of vehicle
  routing problems.
\newblock {\em Mathematical Programming}, 120:347--380, 2009.

\bibitem{RobustOptimizationBook}
Aharon Ben-Tal, Laurent El~Ghaoui, and Arkadi Nemirovski.
\newblock {\em {Robust Optimization}}.
\newblock Princeton University Press, 2009.

\bibitem{Bertsekas:DP}
Dimitri~P. Bertsekas.
\newblock {\em {Dynamic Programming and Optimal Control}}.
\newblock Athena Scientific, 4th edition, 2017.

\bibitem{BertsimasBrownCaramanis2011:SiamReview}
Dimitris Bertsimas, David~D. Brown, and Constantine Caramanis.
\newblock {Theory and Applications of Robust Optimization}.
\newblock {\em SIAM Review}, 53(3):464--501, 2011.

\bibitem{Bertsimas2004:price_of_robustness}
Dimitris Bertsimas and Melvyn Sim.
\newblock The price of robustness.
\newblock {\em Operations Research}, 52(1):35--53, 2004.

\bibitem{BirgeLouveaux:SP}
John~R. Birge and Fra\c{c}ois Louveaux.
\newblock {\em {Introduction to Stochastic Programming}}.
\newblock Springer Science \& Business Media, 2nd edition, 2008.

\bibitem{BoydVandenberghe:convex_optimization}
Stephen Boyd and Lieven Vandenberghe.
\newblock {\em {Convex Optimization}}.
\newblock Cambridge University Press, 2004.

\bibitem{Koc2016:thirty_years_of_HVRP}
\c{C}a\u{g}r{\i} Ko\c{c}, Tolga Bekta\c{s}, Ola Jabali, and Gilbert Laporte.
\newblock Thirty years of heterogeneous vehicle routing.
\newblock {\em European Journal of Operational Research}, 249(1):1 -- 21, 2016.

\bibitem{Ceria2006}
Sebasti{\'a}n Ceria and Robert~A. Stubbs.
\newblock Incorporating estimation errors into portfolio selection: Robust
  portfolio construction.
\newblock {\em Journal of Asset Management}, 7(2):109--127, 2006.

\bibitem{Chao1999:sdvrp}
I-Ming Chao, Bruce Golden, and Edward Wasil.
\newblock A computational study of a new heuristic for the site-dependent
  vehicle routing problem.
\newblock {\em INFOR: Information Systems and Operational Research},
  37(3):319--336, 1999.

\bibitem{Choi2007}
Eunjeong Choi and Dong-Wan Tcha.
\newblock A column generation approach to the heterogeneous fleet vehicle
  routing problem.
\newblock {\em Computers \& Operations Research}, 34(7):2080 -- 2095, 2007.

\bibitem{Cordeau1997:mdvrp}
Jean-Fran\c{c}ois Cordeau, Michel Gendreau, and Gilbert Laporte.
\newblock A tabu search heuristic for periodic and multi-depot vehicle routing
  problems.
\newblock {\em Networks}, 30(2):105--119, 1997.

\bibitem{CordeauLaporte2001:sdvrp}
Jean-Fran\c{c}ois Cordeau and Gilbert Laporte.
\newblock A tabu search algorithm for the site dependent vehicle routing
  problem with time windows.
\newblock {\em INFOR: Information Systems and Operational Research},
  39(3):292--298, 2001.

\bibitem{IntroToAlgorithms:2009:3rd_edition}
Thomas~H. Cormen, Charles~E. Leiserson, Ronald~L. Rivest, and Clifford Stein.
\newblock {\em Introduction to Algorithms, Third Edition}.
\newblock The MIT Press, 3rd edition, 2009.

\bibitem{Dantzig1957:fractional_knapsack}
George~B. Dantzig.
\newblock Discrete-variable extremum problems.
\newblock {\em Operations Research}, 5:266--277, 1957.

\bibitem{Erera2010}
Alan~L. Erera, Juan~C. Morales, and Martin Savelsbergh.
\newblock {The Vehicle Routing Problem with Stochastic Demand and Duration
  Constraints}.
\newblock {\em Transportation Science}, 44(4):474--492, 2010.

\bibitem{Taillard2001}
\'{E}ric D~Taillard, Luca~M Gambardella, Michel Gendreau, and Jean-Yves Potvin.
\newblock Adaptive memory programming: A unified view of metaheuristics.
\newblock {\em European Journal of Operational Research}, 135(1):1 -- 16, 2001.

\bibitem{Fischetti1998}
Matteo Fischetti, Juan José~Salazar González, and Paolo Toth.
\newblock Solving the orienteering problem through branch-and-cut.
\newblock {\em INFORMS Journal on Computing}, 10(2):133--148, 1998.

\bibitem{Funke2005}
Birger Funke, Tore Gr{\"u}nert, and Stefan Irnich.
\newblock Local search for vehicle routing and scheduling problems: Review and
  conceptual integration.
\newblock {\em Journal of Heuristics}, 11(4):267--306, 2005.

\bibitem{Gendreau2014:svrp}
Michel Gendreau, Ola Jabali, and Walter Rei.
\newblock {Chapter 8: Stochastic Vehicle Routing Problems}.
\newblock In Paolo Toth and Daniele Vigo, editors, {\em {Vehicle Routing:
  Problems, Methods and Applications}}, pages 213--239. Society for Industrial
  and Applied Mathematics, Philadelphia, PA, 2014.

\bibitem{Glover1997}
Fred Glover.
\newblock {Tabu Search and Adaptive Memory Programming --- Advances,
  Applications and Challenges}.
\newblock In Richard~S. Barr, Richard~V. Helgason, and Jeffery~L. Kennington,
  editors, {\em Interfaces in Computer Science and Operations Research:
  Advances in Metaheuristics, Optimization, and Stochastic Modeling
  Technologies}, pages 1--75. Springer US, Boston, MA, 1997.

\bibitem{Golden1984:fsm}
Bruce Golden, Arjang Assad, Larry Levy, and Filip Gheysens.
\newblock The fleet size and mix vehicle routing problem.
\newblock {\em Computers \& Operations Research}, 11(1):49 -- 66, 1984.

\bibitem{GoldenRaghavanWasil}
Bruce Golden, S.~Raghavan, and Edward Wasil.
\newblock {\em {The Vehicle Routing Problem: Latest Advances and New
  Challenges}}.
\newblock Springer Science \& Business Media, 2008.

\bibitem{Gounaris2016:AMP}
Chrysanthos~E. Gounaris, Panagiotis~P. Repoussis, Christos~D. Tarantilis,
  Wolfram Wiesemann, and Christodoulos~A. Floudas.
\newblock {An Adaptive Memory Programming Framework for the Robust Capacitated
  Vehicle Routing Problem}.
\newblock {\em Transportation Science}, 50(4):1239--1260, 2016.

\bibitem{Gounaris2013:OR}
Chrysanthos~E. Gounaris, Wolfram Wiesemann, and Christodoulos~A. Floudas.
\newblock {The Robust Capacitated Vehicle Routing Problem Under Demand
  Uncertainty}.
\newblock {\em Operations Research}, 61(3):677--693, 2013.

\bibitem{Hoff2010:industrial_aspects_of_hvrp}
Arild Hoff, Henrik Andersson, Marielle Christiansen, Geir Hasle, and Arne
  L{\o}kketangen.
\newblock Industrial aspects and literature survey: Fleet composition and
  routing.
\newblock {\em Computers \& Operations Research}, 37(12):2041 -- 2061, 2010.

\bibitem{Irnich2014}
Stefan Irnich, Michael Schneider, and Daniele Vigo.
\newblock {Chapter 9: Four Variants of the Vehicle Routing Problem}.
\newblock In Paolo Toth and Daniele Vigo, editors, {\em {Vehicle Routing:
  Problems, Methods and Applications}}, pages 241--271. Society for Industrial
  and Applied Mathematics, Philadelphia, PA, 2014.

\bibitem{Laporte2009:fifty_years_of_VRP}
Gilbert Laporte.
\newblock {Fifty Years of Vehicle Routing}.
\newblock {\em Transportation Science}, 43(4):408--416, 2009.

\bibitem{Laporte1986:2vf}
Gilbert Laporte, H\'{e}l\`{e}ne Mercure, and Yves Nobert.
\newblock An exact algorithm for the asymmetrical capacitated vehicle routing
  problem.
\newblock {\em Networks}, 16(1):33--46, 1986.

\bibitem{Lourenco2003:ILS}
Helena~R. Louren{\c{c}}o, Olivier~C. Martin, and Thomas St{\"u}tzle.
\newblock Iterated local search.
\newblock In Fred Glover and Gary~A. Kochenberger, editors, {\em Handbook of
  Metaheuristics}, pages 320--353. Springer US, Boston, MA, 2003.

\bibitem{Lysgaard2004}
Jens Lysgaard, Adam~N. Letchford, and Richard~W. Eglese.
\newblock A new branch-and-cut algorithm for the capacitated vehicle routing
  problem.
\newblock {\em Mathematical Programming}, 100(2):423--445, 2004.

\bibitem{McCormick2003}
S.~Thomas McCormick, M.R. Rao, and Giovanni Rinaldi.
\newblock Easy and difficult objective functions for max cut.
\newblock {\em Mathematical Programming}, 94(2):459--466, 2003.

\bibitem{Nag1988}
Barin Nag, Bruce~L. Golden, and Arjang~A. Assad.
\newblock Vehicle routing with site dependencies.
\newblock In Bruce Golden and Arjang Assad, editors, {\em Vehicle Routing:
  Methods and Studies}, pages 149--159. Elsevier, Amsterdam, 1988.

\bibitem{Ordonez2010}
Fernando Ord{\'{o}}{\~{n}}ez.
\newblock {Robust Vehicle Routing}.
\newblock {\em INFORMS TutORials in Operations Research}, Risk and Optimization
  in an Uncertain World:153--178, 2010.

\bibitem{Penna2017}
Puca Huachi~Vaz Penna, Anand Subramanian, Luiz~Satoru Ochi, Thibaut Vidal, and
  Christian Prins.
\newblock A hybrid heuristic for a broad class of vehicle routing problems with
  heterogeneous fleet.
\newblock {\em Annals of Operations Research}, pages 1--70, 2017.

\bibitem{Pessoa2018:hvrp}
Artur Pessoa, Ruslan Sadykov, and Eduardo Uchoa.
\newblock {Enhanced Branch-Cut-and-Price algorithm for heterogeneous fleet
  vehicle routing problems}.
\newblock {\em European Journal of Operational Research}, 270(2):530 -- 543,
  2018.

\bibitem{Subramanyam2017:robust_mpvrp}
Anirudh Subramanyam, Frank Mufalli, Jose~M. Pinto, and Chrysanthos~E. Gounaris.
\newblock {Robust Multi-Period Vehicle Routing under Customer Order
  Uncertainty}.
\newblock Available on Optimization Online, 2017.

\bibitem{Sungur2008}
Ilgaz Sungur, Fernando Ord{\'{o}}{\~{n}}ez, and Maged Dessouky.
\newblock {A robust optimization approach for the capacitated vehicle routing
  problem with demand uncertainty}.
\newblock {\em IIE Transactions}, 40(5):509--523, 2008.

\bibitem{Sungur2010}
Ilgaz Sungur, Yingtao Ren, Fernando Ord{\'{o}}{\~{n}}ez, Maged Dessouky, and
  Hongsheng Zhong.
\newblock {A Model and Algorithm for the Courier Delivery Problem with
  Uncertainty}.
\newblock {\em Transportation Science}, 44(2):193--205, 2010.

\bibitem{Taillard1999:hvrp}
{Taillard, \'{E}. D.}
\newblock A heuristic column generation method for the heterogeneous fleet vrp.
\newblock {\em {RAIRO Recherche Op\'{e}rationnelle}}, 33(1):1--14, 1999.

\bibitem{Teodorovic1995}
Du\u{s}an Teodorovi\'{c}, Emina Kr\u{c}mar-No\u{z}i\'{c}, and Goran
  Pavkovi\c{c}.
\newblock The mixed fleet stochastic vehicle routing problem.
\newblock {\em Transportation Planning and Technology}, 19(1):31--43, 1995.

\bibitem{TothVigo:2ndEdition}
Daniele Vigo and Paolo Toth.
\newblock {\em {Vehicle Routing: Problems, Methods and Applications}}.
\newblock Society for Industrial and Applied Mathematics, Philadelphia, PA,
  2014.

\bibitem{Yaman2006}
Hande Yaman.
\newblock Formulations and valid inequalities for the heterogeneous vehicle
  routing problem.
\newblock {\em Mathematical Programming}, 106(2):365--390, 2006.

\bibitem{Young2001}
Neal~E. Young.
\newblock Sequential and parallel algorithms for mixed packing and covering.
\newblock In {\em Proceedings of the 42nd IEEE Symposium on Foundations of
  Computer Science}, pages 538--546, 2001.

\bibitem{Zhang2016}
Yu~Zhang, Roberto Baldacci, Melvyn Sim, and Jiafu Tang.
\newblock Routing optimization with time windows under uncertainty.
\newblock {\em Mathematical Programming}, 2018.

\end{thebibliography}

\newpage

\begin{appendices}


\section{Iterated Local Search}\label{sec:local_search:ILS}
Iterated Local Search (ILS), as the name implies, refers to the repeated application of local search to a current solution.
The current solution may either be generated from scratch using a \emph{construction heuristic} or by \emph{perturbing} a locally optimal solution.
We refer the reader to~\cite{Lourenco2003:ILS} for an introduction to this subject.
Our specific ILS implementation for the robust HVRP is described in Algorithm~\ref{algorithm:ILS}.

\begin{algorithm}[htb]
\caption{Iterated local search.}
\label{algorithm:ILS}
\begin{algorithmic}[1]
\Require $\chi$, $\eta$, $\nu$, $\zeta$, and $\delta$ (user-defined parameters) 
\State Start timer $t$, $\hvrpSolx{B} \gets (\emptyset, \emptyset)$
\While{$t < t_\text{lim}$}
\State $\hvrpSol \gets $ \Call{Construct Solution}{$\eta$}\Comment{Construction phase}
\State $\hvrpSol \gets $ \Call{Tabu Search}{$\hvrpSol, \nu, \zeta$}
\IIf{$\bar{c}\hvrpSol < \bar{c}\hvrpSolx{B}$} $\hvrpSolx{B} \gets \hvrpSol$ \EndIIf
\State $\text{counter} \gets 0$, $\hvrpSolx{\prime} \gets \hvrpSol$\Comment{Perturbation phase}
\While{$\text{counter} < \chi$}
\State $\hvrpSol \gets $ \Call{Perturb Solution}{$\hvrpSol, \delta$}
\State $\hvrpSol \gets $ \Call{Tabu Search}{$\hvrpSol, \nu, \zeta$}
\IIf{$\bar{c}\hvrpSol < \bar{c}\hvrpSolx{B}$}
$\hvrpSolx{B} \gets \hvrpSol$
\EndIIf
\If{$\bar{c}\hvrpSol < \bar{c}\hvrpSolx{\prime}$}
\State $\text{counter} \gets 0$
\State $\hvrpSolx{\prime} \gets \hvrpSol$
\Else \State $\text{counter} \gets \text{counter} + 1$
\EndIf
\EndWhile
\EndWhile
\State \Return $\hvrpSolx{B}$
\end{algorithmic}
\end{algorithm}

Algorithm~\ref{algorithm:ILS} consists of two phases: a \emph{construction} phase (lines~3--5) and a \textit{perturbation} phase (lines~6--17).
The construction phase first constructs an initial solution (line~3) and then improves it using an efficient local search algorithm called \emph{tabu search} (line~4).
The perturbation phase iteratively perturbs this solution (line~8) and improves it using tabu search (line~9).
The perturbation phase terminates if it fails to encounter a solution that is better than the best one found in the current iteration $\hvrpSolx{\prime}$ for more than $\chi$ attempts (line~7).
The ILS algorithm terminates after a pre-specified time limit $t_\text{lim}$ is reached (line~2), at which point the best encountered solution $\hvrpSolx{B}$ is returned (line~19).
The parameters $\eta$, $\nu$, $\zeta$ and $\delta$ are used as inputs to the construction heuristic, tabu search and perturbation mechanisms, respectively, which we describe next.

\paragraph{Construction heuristic.}
The procedure \textsc{Construct Solution}$(\eta)$ works by gradually inserting customers into an initially empty solution.
At any given iteration, an empty route is first constructed for each vehicle type $k \in K$.
To ensure that the fleet availability constraint~\ref{def:feasible:fleet_size} is satisfied, this is done only if the number of routes of vehicle type~$k$ in the current partial solution is less than its available number $m_k$.
Assuming this is the case, we keep adding unrouted customers to this route until it is no longer possible to do so (i.e., either because all customers have been routed or because the capacity condition~\ref{def:feasible:hvrp:robust_capacity} would be violated).
Specifically, all customers that can be potentially added to the route are first inserted into a \emph{restricted candidate list}; a random customer is then selected from this list and inserted into the position that greedily minimizes the corresponding \emph{insertion cost}.
In our implementation, the restricted candidate list is cardinality-based and fixed to a pre-defined size $\eta$.
The parameter $\eta$ determines the extent of randomization and greediness during the construction process.
Based on empirical evidence, a value of $\eta \in [1, 10]$ appeared to work well in our numerical experiments. 

If a route was successfully constructed for at least one vehicle type, we select the route $R$ of vehicle type $k$ for which the \emph{average cost per unit of carried load}, defined as $c(R, k)/\max_{q \in \mathcal{Q}} \sum_{i \in R} q_i$, is minimized.
If no route could be constructed, then any remaining unrouted customer is inserted into an existing route (and corresponding position) for which a randomly weighted sum of the capacity violation and insertion cost is minimized.
We remark that efficient computation of the average cost per unit of carried load is enabled by the data structures described in Proposition~\ref{prop:cpu_and_memory_requirements}.

\paragraph{Tabu search.}
In principle, we can replace all calls to the \textsc{Tabu Search}$(\hvrpSol, \nu, \zeta)$ procedure by a simple local search algorithm that iteratively explores each pre-defined neighborhood to determine an improving solution.
However, doing so will result in solutions that are only locally optimal with respect to the given neighborhoods, i.e., all neighbor solutions $\hvrpSolx{\prime} \in \Omega_Y\hvrpSol$ in each of the pre-defined neighborhoods $Y$ are non-improving: $\bar{c}\hvrpSolx{\prime} \geq c\hvrpSol$.
Tabu search~\cite{Glover1997} overcomes this shortcoming of local search and enhances its performance in two important ways: \textit{(i)} non-improving moves are allowed, and \textit{(ii)} improving moves may be disallowed.
This enhancement is achieved using a short term memory (also known as a \emph{tabu list}) to keep track of the most recently visited solutions in the search history and to prevent revisiting them for a predefined number of local search iterations $\nu$ (the \textit{tabu tenure}).
Any potential solution that has been visited within the last $\nu$ iterations is marked ``tabu'' (or forbidden) and inserted into the tabu list, so that the algorithm does not cycle by repeatedly visiting the same solutions.
In fact, an admissible neighbor solution that is in the tabu list can be visited only if certain \emph{aspiration criteria} are met; specifically, the tabu status of a solution is overridden only if it improves upon the best encountered solution.
Moreover, the tabu search terminates if it performs $\zeta$ local search iterations without observing any further improvement.
Typical values for these parameters are $\nu \in [20, 40]$ and $\zeta \in [100, 500]$ and in our implementation, we set $\nu = 30$ and $\zeta = 500$.
Furthermore, we considered the set of neighborhoods to be the (intra- and inter-route) 1-0 relocate, 1-1 exchange and 2-opt neighborhoods.
At each iteration, we randomly selected one of these neighborhoods and traversed it in lexicographic order, applying pruning mechanisms based on both feasibility and gain.
The first improving neighbor solution replaced the current solution.
Since the size of each of these neighborhoods is $\mathcal{O}(n^2)$, each iteration of tabu search itself can take $\mathcal{O}(n^2)$ time in the worst case, and its run time is largely dictated by the run time of the local search algorithm described in the previous section.

\paragraph{Perturbation mechanism.}
The procedure \textsc{Perturb Solution}$(\hvrpSol, \delta)$ attempts to perturb the current solution $\hvrpSol$ such that the new solution cannot be encountered by application of tabu search alone.
In our implementation, we consider a perturbation mechanism that first removes the route $R$ of vehicle type $k$ from the current solution which has the maximum value of \emph{average cost per unit of carried load}.
In addition to this route, the mechanism also removes any routes $R'$ that are ``sufficiently close'' to $R$, i.e., routes $R'$ for which $\text{distance}(R, R') \coloneqq \max_{(i, j) \in R \times R'} \ubar{c}_{ij} < \delta$.
Here, $\ubar{c}_{ij}$ is any suitably defined distance measure between customers $i$ and $j$ (e.g., geographical distance between $i$ and $j$), and in our implementation, we set $\ubar{c}_{ij} = \min_{k \in K} c_{ijk}$.
If all routes $R'$ satisfy $\text{distance}(R, R') \geq \delta$, then the route $R''$ which has the smallest distance to $R$ is removed from the current solution.
All customers that were visited on the deleted routes are then considered to be unrouted and are added back to the current partial solution using the same greedy insertion procedure that was used in the construction heuristic.
We note that the parameter $\delta$ determines the extent of perturbation, with larger values of $\delta$ corresponding to higher extents of perturbation.

\section{Adaptive Memory Programming}\label{sec:local_search:AMP}
Adaptive Memory Programming (AMP) is a metaheuristic that focuses on the exploitation of strategic memory components.
Based on the intuition that high-quality locally optimal solutions share common features (e.g., common customer visiting sequences), AMP attempts to exploit a set of long-term memories (in contrast to the short-term memory used in tabu search) for the iterative construction of new \emph{provisional solutions}. These solutions are used to restart and intensify the search, while adaptive learning mechanisms are
applied to update the memory structures. We refer the reader to~\cite{Glover1997,Taillard2001} for a general overview of this subject.
Our specific AMP implementation for the robust HVRP is described in Algorithm~\ref{algorithm:AMP}.

\begin{algorithm}[htb]
\caption{Adaptive Memory Programming.}
\label{algorithm:AMP}
\begin{algorithmic}[1]
\Require $\mu$, $\eta$, $\nu$, $\zeta$, and $\theta$ (user-defined parameters) 
\State Start timer $t$, $\mathcal{P} \gets \emptyset$, $\hvrpSolx{B} \gets (\emptyset, \emptyset)$
\While{$\abs{\mathcal{P}} < \mu$}\Comment{Initialization phase}
\State $\hvrpSol \gets $ \Call{Construct Solution}{$\eta$}
\State $\hvrpSol \gets $ \Call{Tabu Search}{$\hvrpSol, \nu, \zeta$}
\IIf{$\bar{c}\hvrpSol < \bar{c}\hvrpSolx{B}$} $\hvrpSolx{B} \gets \hvrpSol$ \EndIIf
\State $\mathcal{P} \gets \mathcal{P} \cup \hvrpSol$ 
\EndWhile
\While{$t < t_\text{lim}$}\Comment{Exploitation phase}
\State $\hvrpSol \gets $ \Call{Construct Provisional Solution}{$\mathcal{P}, \theta$}
\State $\hvrpSol \gets $ \Call{Tabu Search}{$\hvrpSol, \nu, \zeta$}
\IIf{$\bar{c}\hvrpSol < \bar{c}\hvrpSolx{B}$} $\hvrpSolx{B} \gets \hvrpSol$ \EndIIf
\State $\mathcal{P} \gets $ \Call{Update Reference Set}{$\mathcal{P}, \hvrpSol$}
\EndWhile
\State \Return $\hvrpSolx{B}$
\end{algorithmic}
\end{algorithm}

Algorithm~\ref{algorithm:AMP} consists of two phases: an \emph{initialization} phase (lines~2--7) and an \emph{exploitation} phase (lines~8--13).
The initialization phase populates the \emph{reference set} $\mathcal{P}$ with $\mu$ solutions that are generated by first constructing an initial solution (line~3) and then improving it using tabu search (line~4).
Once the initialization phase has completed, the exploitation phase manipulates $\mathcal{P}$ by exploring search trajectories initiated using the provisional solutions as starting points.
Specifically, at each iteration of the exploitation phase, a provisional solution is first constructed by identifying common features of the reference solutions in $\mathcal{P}$ (line~9).
This provisional solution is then further improved using tabu search (line~10) and inserted into the reference set $\mathcal{P}$ (line~12).
The AMP algorithm terminates after a pre-specified time limit $t_\text{lim}$ is reached (line~8), at which point the best encountered solution $\hvrpSolx{B}$ is returned (line~14).
The procedures \textsc{Construct Solution}$(\eta)$ and \textsc{Tabu Search}$(\hvrpSol, \nu, \zeta)$ along with their associated parameters $\eta$, $\nu$ and $\zeta$ are exactly the same as in ILS (see Algorithm~\ref{algorithm:ILS}).
The parameters $\theta$ and $\mu$ are used in the provisional construction and reference set update methods respectively, which we describe next.

\paragraph{Generation of provisional solutions.}
Provisional solutions are constructed by identifying and combining \emph{elite components} from the reference set $\mathcal{P}$. We define an elite component to be a route associated with a particular vehicle type whose edges appear ``sufficiently frequently'' among the solutions in $\mathcal{P}$.
The overall procedure \textsc{Construct Provisional Solution}$(\mathcal{P}, \theta)$ works as follows.
We first attempt to generate a route for every vehicle type $k \in K$, assuming the number of routes of this type is less than $m_k$ in the current solution.
With probability $\theta$, we construct a route using the members of $\mathcal{P}$, and with probability $1 - \theta$, we use the mechanism outlined in the basic \textsc{Construct Solution}$(\eta)$ procedure.
In the former case, we first assign a score to each route $R_h = (r_{h1}, \ldots, r_{h\abs{R}})$ of vehicle type $\kappa_h$ from the solution $\hvrpSol \in \mathcal{P}$ as follows: $\text{score}(R_h, \kappa_h) = \text{w}\hvrpSol\sum_{l = 0}^{\abs{R}} \text{freq}(r_{hl}, r_{hl + 1}, k) \mathbb{I}[r_{hl}, r_{hl + 1} \notin \text{current solution}]$.
Here $\text{w}\hvrpSol$ refers to the weight of solution $\hvrpSol$ while $\text{freq}(i, j, k)$ refers to the frequency with which edge $(i, j) \in E$ appears among all routes of vehicle type $k$. Specifically, these quantities are defined as follows:
\begingroup
\allowdisplaybreaks
\begin{align*}
&\text{w}\hvrpSol = \dfrac{ \max_{\hvrpSolx{\prime} \in \mathcal{P}} \bar{c}\hvrpSolx{\prime} - \bar{c}\hvrpSol}{\max_{\hvrpSolx{\prime} \in \mathcal{P}} \bar{c}\hvrpSolx{\prime} - \min_{\hvrpSolx{\prime} \in \mathcal{P}} \bar{c}\hvrpSolx{\prime}},
\\
&\text{freq}(i, j, k) = \sum_{\hvrpSolx{\prime} \in \mathcal{P}} \sum_{h = 1}^{H'} \mathbb{I}[\kappa^\prime_h = k] \sum_{l = 0}^{\abs{R^\prime_h}} \mathbb{I}[(r^\prime_{hl}, r^\prime_{hl+1}) = (i, j)].
\end{align*}
\endgroup
The route $R$ with the highest score is then determined to be the candidate route of vehicle type $k$ (after deleting those customers that have already been routed in the current solution).

Among all generated routes, the candidate route of the vehicle type with the lowest value of \textit{average cost per unit of carried load} is then adopted in the current solution, and the entire procedure repeats.
At the end, if unrouted customers still remain, then they are inserted into an existing route (and corresponding position) for which a randomly weighted sum of the capacity violation and insertion cost is minimized, similar to the basic construction heuristic. 
We recall that this is done so that the fleet availability constraints~\ref{def:feasible:fleet_size} are never violated.

\paragraph{Reference set update method.}
In the initialization phase, the reference set $\mathcal{P}$ is grown to contain up to $\mu$ different solutions. In the exploitation phase, the size of $\mathcal{P}$ is kept constant by replacing older solutions with more recently encountered ones. To ensure an appropriate balance between quality and diversity among the reference solutions, the procedure \textsc{Update Reference Set}$(\mathcal{P}, \hvrpSol)$ uses a simple rule that replaces the worst solution (in terms of its total cost) $\hvrpSolx{W}$ whenever the candidate solution to be inserted $\hvrpSol$ satisfies $\bar{c}\hvrpSol < \bar{c}\hvrpSolx{W}$.

\section{Detailed Tables of Results}
The following tables report the best solutions found for each of the benchmark instances and uncertainty sets that we have considered in the paper.
The budget sets, factor models and general ellipsoids correspond to the setting of $(\alpha, \beta) = (0.1, 0.5)$, the cardinality-constrained and discrete sets correspond to $(\alpha, \beta) = (0.1, 0.2)$, while the axis-parallel ellipsoid corresponds to $(\alpha, \beta) = (0.1, 1.0)$.
In each table, ``Inst'' denotes the instance numbered as per the original dataset, $n$ and $m$ denote the number of customers and vehicle types respectively, while the quantity reported under uncertainty set $\mathcal{Q}$ is the best found solution for that instance and setting of the uncertainty set.

\begin{table}[htbp]
  \centering
  \small
  \caption{Summary of results for the HVRPFD instances.}
    \begin{tabularx}{\textwidth}{lrc*{7}{R}}
    \toprule
    Inst  & $n$    & $m$    & $\left\{q^0\right\}$ & $\mathcal{Q}_B$ & $\mathcal{Q}_F$ & $\mathcal{Q}_E^\text{ax}$ & $\mathcal{Q}_E^\text{gen}$ & $\mathcal{Q}_G$ & $\mathcal{Q}_S$ \\
    \midrule
    13     & 50     & 6      &    2,929.54  &      3,183.32  &      3,136.73  &    3,061.73  &    3,093.55  &      3,185.09  &    3,047.35  \\
    14     & 50     & 3      &    9,584.67  &    10,106.67  &    10,103.02  &    9,600.38  &    9,605.91  &    10,106.67  &    9,599.48  \\
    15     & 50     & 3      &    2,761.41  &      3,065.29  &      2,965.21  &    2,941.70  &    2,941.70  &      3,065.29  &    2,934.85  \\
    16     & 50     & 3      &    3,085.06  &      3,265.41  &      3,238.55  &    3,145.47  &    3,221.38  &      3,265.41  &    3,134.01  \\
    17     & 75     & 4      &    1,960.59  &      2,076.96  &      2,067.28  &    2,001.71  &    2,013.99  &      2,076.96  &    1,991.14  \\
    18     & 75     & 6      &    3,524.34  &      3,748.68  &      3,709.86  &    3,628.33  &    3,662.54  &      3,745.10  &    3,618.75  \\
    19     & 100    & 3      &    9,693.23  &    10,420.34  &    10,420.34  &    9,701.73  &    9,748.98  &    10,420.34  &    9,701.64  \\
    20     & 100    & 3      &    4,469.86  &      4,795.14  &      4,731.65  &    4,599.16  &    4,714.54  &      4,834.17  &    4,612.88  \\
    \bottomrule
    \end{tabularx}%
  \label{table:detailed_hvrpfd}%
\end{table}%

\begin{table}[htbp]
  \centering
  \small
  \caption{Summary of results for the HVRPD instances.}
    \begin{tabularx}{\textwidth}{lrc*{7}{R}}
    \toprule
    Inst  & $n$    & $m$    & $\left\{q^0\right\}$ & $\mathcal{Q}_B$ & $\mathcal{Q}_F$ & $\mathcal{Q}_E^\text{ax}$ & $\mathcal{Q}_E^\text{gen}$ & $\mathcal{Q}_G$ & $\mathcal{Q}_S$ \\
    \midrule
    13     & 50     & 6      &    1,432.57  &      1,517.84  &      1,502.24  &    1,481.13  &    1,482.49  &      1,517.84  &    1,468.83  \\
    14     & 50     & 3      &       584.38  &          606.67  &          603.02  &       596.53  &       597.54  &          606.67  &       596.63  \\
    15     & 50     & 3      &       961.41  &      1,015.29  &      1,012.64  &       991.70  &       991.70  &      1,015.29  &       984.85  \\
    16     & 50     & 3      &    1,085.06  &      1,144.94  &      1,133.16  &    1,120.57  &    1,121.38  &      1,144.94  &    1,110.41  \\
    17     & 75     & 4      &    1,009.48  &      1,061.96  &      1,052.28  &    1,021.50  &    1,023.99  &      1,061.96  &    1,020.28  \\
    18     & 75     & 6      &    1,761.88  &      1,823.58  &      1,812.32  &    1,783.93  &    1,788.38  &      1,823.58  &    1,777.02  \\
    19     & 100    & 3      &    1,093.23  &      1,120.34  &      1,120.34  &    1,101.64  &    1,116.47  &      1,120.34  &    1,101.64  \\
    20     & 100    & 3      &    1,472.97  &      1,534.17  &      1,533.62  &    1,501.53  &    1,514.39  &      1,534.17  &    1,508.31  \\
    \bottomrule
    \end{tabularx}%
  \label{table:detailed_hvrpd}%
\end{table}%

\begin{table}[htbp]
  \centering
  \small
  \caption{Summary of results for the FSMFD instances.}
    \begin{tabularx}{\textwidth}{lrc*{7}{R}}
    \toprule
    Inst  & $n$    & $m$    & $\left\{q^0\right\}$ & $\mathcal{Q}_B$ & $\mathcal{Q}_F$ & $\mathcal{Q}_E^\text{ax}$ & $\mathcal{Q}_E^\text{gen}$ & $\mathcal{Q}_G$ & $\mathcal{Q}_S$ \\
    \midrule
    3      & 20     & 5      &       986.93  &      1,144.22  &      1,118.76  &    1,106.84  &    1,106.65  &      1,144.22  &    1,092.59  \\
    4      & 20     & 3      &    6,377.28  &      6,437.33  &      6,432.25  &    6,413.06  &    6,413.06  &      6,437.33  &    6,392.34  \\
    5      & 20     & 5      &    1,167.40  &      1,322.26  &      1,298.72  &    1,287.48  &    1,287.48  &      1,322.26  &    1,249.08  \\
    6      & 20     & 3      &    6,416.11  &      6,516.47  &      6,500.82  &    6,499.74  &    6,499.74  &      6,516.47  &    6,470.82  \\
    13     & 50     & 6      &    2,711.61  &      2,964.65  &      2,935.40  &    2,906.68  &    2,873.24  &      2,964.65  &    2,908.96  \\
    14     & 50     & 3      &    8,582.05  &      9,126.90  &      8,644.00  &    8,618.16  &    8,618.16  &      9,126.90  &    8,618.16  \\
    15     & 50     & 3      &    2,450.82  &      2,634.96  &      2,608.61  &    2,590.47  &    2,591.86  &      2,634.96  &    2,591.93  \\
    16     & 50     & 3      &    2,906.71  &      3,168.92  &      3,137.50  &    3,061.09  &    3,070.34  &      3,168.92  &    3,052.39  \\
    17     & 75     & 4      &    1,872.49  &      2,004.48  &      1,969.64  &    1,932.82  &    1,946.09  &      2,004.48  &    1,925.47  \\
    18     & 75     & 6      &    2,918.45  &      3,153.09  &      3,100.08  &    3,056.88  &    3,064.50  &      3,152.16  &    3,063.70  \\
    19     & 100    & 3      &    8,094.97  &      8,662.86  &      8,649.99  &    8,132.14  &    8,409.14  &      8,662.86  &    8,134.19  \\
    20     & 100    & 3      &    3,839.11  &      4,165.91  &      4,138.25  &    4,046.05  &    4,085.22  &      4,168.44  &    4,054.18  \\
    \bottomrule
    \end{tabularx}%
  \label{table:detailed_fsmfd}%
\end{table}%

\begin{table}[htbp]
  \centering
  \small
  \caption{Summary of results for the FSMF instances.}
    \begin{tabularx}{\textwidth}{lrc*{7}{R}}
    \toprule
    Inst  & $n$    & $m$    & $\left\{q^0\right\}$ & $\mathcal{Q}_B$ & $\mathcal{Q}_F$ & $\mathcal{Q}_E^\text{ax}$ & $\mathcal{Q}_E^\text{gen}$ & $\mathcal{Q}_G$ & $\mathcal{Q}_S$ \\
    \midrule
    3      & 20     & 5      &       887.99  &          954.37  &          949.79  &       921.23  &       927.96  &          951.61  &       918.63  \\
    4      & 20     & 3      &    6,327.60  &      6,437.33  &      6,432.25  &    6,413.06  &    6,413.06  &      6,437.33  &    6,379.27  \\
    5      & 20     & 5      &       919.86  &      1,005.27  &          980.57  &       961.63  &       963.63  &          988.63  &       949.85  \\
    6      & 20     & 3      &    6,353.72  &      6,516.47  &      6,500.82  &    6,499.74  &    6,499.74  &      6,516.47  &    6,448.52  \\
    13     & 50     & 6      &    2,223.70  &      2,399.20  &      2,365.21  &    2,305.17  &    2,327.57  &      2,406.36  &    2,304.11  \\
    14     & 50     & 3      &    8,562.41  &      9,119.03  &      8,644.00  &    8,618.16  &    8,618.16  &      9,119.03  &    8,618.16  \\
    15     & 50     & 3      &    2,360.49  &      2,586.37  &      2,561.65  &    2,475.63  &    2,492.99  &      2,586.37  &    2,463.19  \\
    16     & 50     & 3      &    2,506.66  &      2,721.40  &      2,697.88  &    2,610.03  &    2,646.51  &      2,720.43  &    2,612.59  \\
    17     & 75     & 4      &    1,621.25  &      1,734.53  &      1,712.24  &    1,658.29  &    1,680.08  &      1,734.53  &    1,657.43  \\
    18     & 75     & 6      &    2,195.13  &      2,369.65  &      2,322.62  &    2,272.55  &    2,294.51  &      2,369.65  &    2,267.88  \\
    19     & 100    & 3      &    8,094.97  &      8,662.86  &      8,649.99  &    8,129.33  &    8,382.79  &      8,662.86  &    8,135.02  \\
    20     & 100    & 3      &    3,714.44  &      4,043.97  &      3,973.46  &    3,862.99  &    3,911.89  &      4,060.73  &    3,874.70  \\
    \bottomrule
    \end{tabularx}%
  \label{table:detailed_fsmf}%
\end{table}%

\begin{table}[htbp]
  \centering
  \small
  \caption{Summary of results for the FSMD instances.}
    \begin{tabularx}{\textwidth}{lrc*{7}{R}}
    \toprule
    Inst  & $n$    & $m$    & $\left\{q^0\right\}$ & $\mathcal{Q}_B$ & $\mathcal{Q}_F$ & $\mathcal{Q}_E^\text{ax}$ & $\mathcal{Q}_E^\text{gen}$ & $\mathcal{Q}_G$ & $\mathcal{Q}_S$ \\
    \midrule
    3      & 20     & 5      &       555.66  &          623.22  &          609.51  &       604.02  &       604.02  &          623.22  &       585.02  \\
    4      & 20     & 3      &       369.77  &          378.70  &          373.24  &       369.77  &       373.24  &          380.71  &       373.24  \\
    5      & 20     & 5      &       713.62  &          742.87  &          737.36  &       736.90  &       736.90  &          742.85  &       721.00  \\
    6      & 20     & 3      &       391.42  &          414.66  &          415.03  &       396.26  &       405.46  &          406.19  &       396.26  \\
    13     & 50     & 6      &    1,405.87  &      1,491.86  &      1,488.78  &    1,468.28  &    1,471.82  &      1,491.86  &    1,460.06  \\
    14     & 50     & 3      &       575.63  &          603.21  &          601.71  &       583.94  &       588.64  &          603.21  &       583.94  \\
    15     & 50     & 3      &       944.96  &          999.82  &          997.98  &       979.40  &       985.19  &          999.82  &       974.59  \\
    16     & 50     & 3      &    1,078.67  &      1,131.00  &      1,114.78  &    1,110.88  &    1,111.69  &      1,131.00  &    1,107.74  \\
    17     & 75     & 4      &    1,006.50  &      1,036.54  &      1,029.75  &    1,017.52  &    1,020.50  &      1,038.60  &    1,016.64  \\
    18     & 75     & 6      &    1,746.53  &      1,800.80  &      1,795.02  &    1,778.61  &    1,785.58  &      1,800.80  &    1,775.35  \\
    19     & 100    & 3      &    1,073.71  &      1,106.17  &      1,101.07  &    1,084.54  &    1,099.69  &      1,105.44  &    1,093.74  \\
    20     & 100    & 3      &    1,468.79  &      1,530.52  &      1,525.33  &    1,495.31  &    1,514.46  &      1,533.24  &    1,497.01  \\
    \bottomrule
    \end{tabularx}%
  \label{table:detailed_fsmd}%
\end{table}%

\begin{table}[htbp]
  \centering
  \small
  \caption{Summary of results for the SDVRP instances.}
    \begin{tabularx}{\textwidth}{lrc*{7}{R}}
    \toprule
    Inst  & $n$    & $m$    & $\left\{q^0\right\}$ & $\mathcal{Q}_B$ & $\mathcal{Q}_F$ & $\mathcal{Q}_E^\text{ax}$ & $\mathcal{Q}_E^\text{gen}$ & $\mathcal{Q}_G$ & $\mathcal{Q}_S$ \\
    \midrule
    1      & 50     & 3      &       609.52  &       640.32  &       634.19  &       621.50  &       624.77  &       640.32  &       623.37  \\
    2      & 50     & 2      &       563.85  &       598.10  &       593.59  &       569.45  &       577.70  &       598.10  &       569.45  \\
    3      & 75     & 3      &       899.90  &       954.32  &       938.11  &       908.95  &       922.33  &       954.32  &       908.95  \\
    4      & 75     & 2      &       806.72  &       854.43  &       843.73  &       831.60  &       836.25  &       854.43  &       825.27  \\
    5      & 100    & 3      &       963.09  &    1,003.57  &       993.64  &       977.14  &       978.00  &    1,003.57  &       976.01  \\
    6      & 100    & 2      &       965.86  &    1,028.52  &    1,016.95  &       987.88  &    1,000.59  &    1,028.52  &       983.46  \\
    7      & 27     & 3      &       391.30  &       391.30  &       391.30  &       391.30  &       391.30  &       391.30  &       391.30  \\
    8      & 54     & 3      &       664.46  &       664.46  &       664.46  &       664.46  &       664.46  &       664.46  &       664.46  \\
    9      & 81     & 3      &       948.23  &       948.23  &       948.23  &       948.23  &       948.23  &       948.23  &       948.23  \\
    10     & 108    & 3      &    1,189.69  &    1,218.75  &    1,218.75  &    1,208.74  &    1,208.74  &    1,218.75  &    1,200.34  \\
    13     & 54     & 3      &    1,150.31  &    1,194.18  &    1,194.18  &    1,162.85  &    1,171.58  &    1,194.18  &    1,171.58  \\
    14     & 108    & 3      &    1,873.74  &    1,960.88  &    1,960.88  &    1,886.53  &    1,908.76  &    1,960.88  &    1,889.59  \\
    23     & 100    & 3      &       764.19  &       803.29  &       803.29  &       781.81  &       784.83  &       803.29  &       781.81  \\
    \bottomrule
    \end{tabularx}%
  \label{table:detailed_sdvrp}%
\end{table}%

\begin{table}[htbp]
  \centering
  \small
  \caption{Summary of results for the MDVRP instances.}
    \begin{tabularx}{\textwidth}{lrc*{7}{R}}
    \toprule
    Inst  & $n$    & $m$    & $\left\{q^0\right\}$ & $\mathcal{Q}_B$ & $\mathcal{Q}_F$ & $\mathcal{Q}_E^\text{ax}$ & $\mathcal{Q}_E^\text{gen}$ & $\mathcal{Q}_G$ & $\mathcal{Q}_S$ \\
    \midrule
    1      & 50     & 4      &       560.63  &       576.87  &       570.81  &       570.81  &       570.81  &       576.87  &       575.36  \\
    2      & 50     & 4      &       464.35  &       473.53  &       473.01  &       464.48  &       464.48  &       473.22  &       464.48  \\
    3      & 75     & 5      &       623.79  &       640.65  &       635.93  &       629.86  &       630.98  &       641.19  &       629.67  \\
    4      & 100    & 2      &       951.28  &       999.21  &       997.20  &       973.68  &       982.38  &       999.21  &       978.74  \\
    5      & 100    & 2      &       725.55  &       750.03  &       747.29  &       731.27  &       741.44  &       750.03  &       732.17  \\
    6      & 100    & 3      &       838.00  &       876.50  &       871.47  &       864.49  &       867.39  &       877.26  &       865.93  \\
    7      & 100    & 4      &       843.30  &       881.97  &       871.31  &       855.39  &       860.65  &       881.97  &       855.55  \\
    12     & 80     & 2      &    1,290.93  &    1,318.95  &    1,314.36  &    1,314.36  &    1,314.36  &    1,318.95  &    1,301.68  \\
    15     & 160    & 4      &    2,463.14  &    2,505.42  &    2,505.42  &    2,505.42  &    2,505.42  &    2,505.42  &    2,492.55  \\
    \bottomrule
    \end{tabularx}%
  \label{table:detailed_mdvrp}%
\end{table}%
\end{appendices}

\end{document}